\tikzstyle{w_vertex}=[circle,fill=black!100,text=white,inner sep=0.4mm,draw]
\tikzstyle{vertex}=[circle,fill=black!100,text=white,inner sep=0.8mm]
\tikzstyle{point}=[circle,fill=black,inner sep=0.1mm]
\theoremstyle{plain}
\newtheorem{theorem}{Theorem}
\newtheorem{lemma}{Lemma}
\newtheorem*{claim*}{Claim}
\newtheorem{corollary}{Corollary}
\theoremstyle{definition}
\newtheorem{conjecture}{Conjecture}
\theoremstyle{remark}
\date{}
\title{Graph parameters, implicit representations \\ and factorial properties\thanks{Some results presented in this paper appeared in the extended abstract 
\cite{IWOCA2022} published in the proceedings of the 33rd International Workshop on Combinatorial Algorithms, IWOCA 2022.}}
\author{ B. Alecu\thanks{School of Computing, University of Leeds, UK. Email:  B.Alecu@leeds.ac.uk}  
\and V.E. Alekseev 
\and A. Atminas\thanks{Department of Mathematical Sciences, Xi'an Jiaotong-Liverpool University, 111 Ren'ai Road, Suzhou 215123, China. Email: Aistis.Atminas@xjtlu.edu.cn. Aistis Atminas was supported by XJTLU Research Development Fund RDF-22-01-070} 
\and V. Lozin\thanks{Mathematics Institute, University of Warwick, Coventry, CV4 7AL, UK. Email:  V.Lozin@warwick.ac.uk} 
\and V. Zamaraev\thanks{Department of Computer Science, University of Liverpool, Ashton Street, Liverpool, L69 3BX, UK. Email: Viktor.Zamaraev@liverpool.ac.uk}}
\DeclareMathOperator{\sd}{sd}
\DeclareMathOperator{\dd}{dd}
\DeclareMathOperator{\cont}{cont}
\begin{document}
\maketitle

\newtheorem{obs}{Observation}

\newtheorem{prop}{Proposition}
\newtheorem{cor}{Corollary}

\def\N{\mathbb{N}}
\def\t{\sim}
\def\nt{\nsim}
\def\1{n+1}
\def\2{n+2}

\begin{abstract} How to efficiently represent a graph in computer memory is a fundamental data structuring question. 
In the present paper, we address this question from a combinatorial point of view. 
A representation of an $n$-vertex graph $G$ is called implicit if it assigns to each vertex of $G$ a binary code of length $O(\log n)$ so that the adjacency of two
vertices is a function of their codes. A necessary condition for a hereditary class $\mathcal X$ of graphs to admit an implicit representation is that $\mathcal X$ has 
at most factorial speed of growth. This condition, however, is not sufficient, as was recently shown in
[Hatami \& Hatami, FOCS 2022]. 
Several sufficient conditions for the existence of implicit representations deal with boundedness of some parameters, such as degeneracy or clique-width. 
In the present paper, we analyze more graph parameters and prove a number of new results related to implicit representation and factorial properties.  
\end{abstract}

{\it Keywords}: Graph parameter; Implicit representation; Hereditary class; Factorial property


\section{Introduction}

Every simple graph with $n$ vertices can be represented by a binary word of length $\binom{n}{2}$ (one bit per pair of vertices), 
and if no \emph{a priori} information about the graph is known, this representation is optimal. 
However, for graphs belonging to certain classes, this representation can be substantially shortened.
For instance, the Pr\"ufer code allows representing a labelled tree with $n$ vertices by a binary word of length $n\log n$. 
This is optimal among all representations of a labelled graph, because we need $\log n$ bits for each vertex just to represent its label. 
Of course, those $n\log n$ bits describing the vertex labels do not in general describe the graph itself, 
since these bits do not necessarily allow us to compute the adjacencies.
However, it is sometimes possible to represent not only the labels, but also the edges, using only $O(\log n)$ bits per vertex. 
If additionally the adjacency between two vertices can be computed from their codes (i.e.\ from their labels of length $O(\log n)$), 
then the graph is said to be represented {\it implicitly}.

The idea of implicit representation was introduced in \cite{implicit-1}. Its importance is due to the following reasons.
First, it is order-optimal, i.e.\ within a factor of the optimal representation. Second, it allows one to store information about graphs locally, which is crucial in distributed computing. 
Finally, it is applicable to graphs in various classes of practical or theoretical importance, such as graphs of bounded vertex degree, of bounded clique-width, 
planar graphs, interval graphs, permutation graphs, line graphs, etc. 

To better describe the area of applicability of implicit representations, let us observe that if graphs in a class $\mathcal X$ 
admit an implicit representation, then the number of $n$-vertex labelled graphs in $\mathcal X$, also known as the {\it speed} of $\mathcal X$, 
must be $2^{O(n\log n)}$, since the number of graphs cannot be larger than the number of binary words representing them. 
In the terminology of \cite{SpHerProp}, hereditary classes containing  $2^{\Theta(n\log n)}$ $n$-vertex labelled graphs have {\it factorial} speed of growth.
The family of factorial classes, i.e.\ hereditary classes with a factorial speed of growth, is rich and diverse. In particular, 
it contains all classes mentioned earlier and a variety of other classes, such as unit disk graphs, classes of graphs of bounded arboricity, of bounded functionality \cite{functionality}, etc.
The authors of \cite{implicit-1}, who introduced the notion of implicit representation, asked whether {\it every} hereditary class of speed 
$2^{O(n\log n)}$ admits such a representation. 

Recently, Hatami and Hatami \cite{false} answered this question negatively by proving the existence of a factorial class of bipartite graphs
that does not admit an implicit representation. This negative result raises the following natural question:
if the speed is not responsible for implicit representation, then what is responsible for it?

Looking for an answer to this question, we observe that most positive results on implicit representations deal with classes where certain graph parameters, such as degeneracy or twin-width \cite{twin},
are bounded. In an attempt to produce more positive results, in this paper we analyze more graph parameters.
These parameters are of interest on their own right, and in Section~\ref{sec:parameters} we prove a number of results related to them. 
Some of these results suggest new candidate classes for implicit representation, and in Section~\ref{sec:implicit} we develop such representations for them.
  
Finally, we observe that, in spite of the negative result in \cite{false}, factorial speed remains a necessary condition 
for an implicit representation in a hereditary class $\mathcal X$, and determining the speed of $\mathcal X$ is the first natural step towards 
deciding whether such a representation exists. Some results on this topic are presented in Section~\ref{sec:factorial}.

All relevant preliminary information can be found in Section~\ref{sec:pre}. 
Section~\ref{sec:con} concludes the paper with a number of open problems.


\section{Preliminaries}
\label{sec:pre}
All graphs in this paper are simple, i.e.\ undirected, without loops or multiple edges. 
The vertex set and the edge set of a graph $G$ are denoted $V(G)$ and $E(G)$, respectively. 
The neighbourhood of a vertex $x\in V(G)$, denoted $N(x)$, is the set of vertices adjacent to $x$,
and the degree of $x$, denoted $\deg(x)$, is the size of its neighbourhood. The codegree of $x$ is 
the number of vertices non-adjacent to $x$. By $[n]$ we denote the set of integers between $1$ and $n$ inclusive.

As usual, $K_n,P_n$ and $C_n$ denote a complete graph, a chordless path and a chordless cycle on $n$ vertices, respectively.  
$S_{i,j,k}$ is a tree with exactly three leaves with distances $i,j,k$ from the only vertex of degree 3. 
By $nG$ we denote the disjoint union of $n$ copies of $G$.

The subgraph of $G$ induced by a set $U\subseteq V(G)$ is denoted $G[U]$. 
If $G$ does not contain an induced subgraph isomorphic to a graph $H$, we say that $G$ is $H$-free, or that $G$ excludes $H$,
or that $H$ is a forbidden induced subgraph for $G$.  

In a graph, a {\it clique} is a subset of pairwise adjacent vertices and an {\it independent set} is a subset of pairwise non-adjacent vertices.
A {\em homogeneous set} is a subset of vertices, which is either a clique or an independent set.

A graph $G=(V,E)$ is {\it bipartite} if its vertex set can be partitioned into two independent sets. 
A bipartite graph given together with a bipartition of its vertex set into two independent sets $A$ and $B$ will be denoted $G=(A,B,E)$.
The {\it bipartite complement} of a bipartite graph $G=(A,B,E)$ is the bipartite graph $\widetilde{G}:=(A,B,(A\times B)-E)$.
The {\it bi-codegree} of a vertex $x$ in a bipartite graph $G=(A,B,E)$ is the degree of $x$ in $\widetilde{G}$.
By $K_{n,m}$ we denote a complete bipartite graph with parts of size $n$ and $m$. The graph $K_{1,n}$ (for some $n$) is called a {\em star}.
A \emph{chain graph} is a bipartite graph whose vertices in one of the parts can be linearly ordered with respect to the inclusion of their neighbourhoods. The class of chain graphs is precisely the class of $2K_2$-free bipartite graphs.

Given two bipartite graphs $G_1=(A_1,B_1,E_1)$ and $G_2=(A_2,B_2,E_2)$, we say that $G_1$ does not contain 
a {\it one-sided copy} of $G_2$ if there is no induced copy of $G_2$ in $G_1$ with $A_2\subseteq A_1$ or 
there is no induced copy of $G_2$ in $G_1$ with $A_2\subseteq B_1$.

We say that a graph $G$ is {\it co-bipartite} if it is the complement of a bipartite graph, and that $G$ is {\it split} 
if the vertex set of $G$ can be partitioned into a clique and an independent set.  

\subsection{Graph classes}

A class of graphs is {\it hereditary} if it is closed under taking induced subgraphs. 
It is well known that a class $\mathcal X$ is hereditary if and only if $\mathcal X$ can be described by a set of minimal forbidden induced subgraphs. 
In this section, we introduce a few hereditary classes that play an important role in this paper. 


Most of our results deal with hereditary classes of bipartite graphs, which is motivated by the negative result in \cite{false} and the following argument.
A natural way to transform any graph into a bipartite graph is to interpret its adjacency matrix as a bipartite
adjacency matrix. This extends to a transformation between hereditary classes: transform every graph in a hereditary class to a bipartite graph and take the hereditary closure of the obtained set of bipartite graphs. As was shown in \cite{HWZ21}, this transformation preserves the factorial speed of growth as well as the existence of an implicit representation\footnote{In \cite{HWZ21}, the transformation was shown to preserve a specific type of implicit representations, but the argument works for arbitrary implicit representations.}.

\paragraph{Monogenic classes of bipartite graphs.}

Even in the case of \emph{monogenic} classes of bipartite graphs, i.e. classes of bipartite graphs defined by a single forbidden induced bipartite subgraph, characterizing which classes are factorial is not straightforward.
In \cite{Allen}, Allen identified nearly all factorial classes in this family, with the exception of $P_7$-free bipartite graphs. 
This exceptional class was characterised as factorial in \cite{P7}, which leads to a dichotomy presented in Theorem~\ref{thm:speed-dichotomy} below.
This theorem follows readily from the results in \cite{Allen} and \cite{P7} and Lemma~\ref{lem:3-graphs}.
The graph $F_{t,p}$ mentioned in the lemma is presented in Figure~\ref{fig:speed}. 

\begin{figure}[ht]
\begin{center} 
\begin{picture}(100,40)
\setlength{\unitlength}{0.4mm}
\put(0,0){\circle*{5}}
\put(10,0){\circle*{5}}
\put(30,0){\circle*{5}}
\put(40,0){\circle*{5}}
\put(20,42){\circle{5}}
\put(15,0){\circle*{1}}
\put(20,0){\circle*{1}}
\put(25,0){\circle*{1}}
\put(20,40){\line(-1,-2){20}}
\put(20,40){\line(-1,-4){10}}
\put(20,40){\line(1,-2){20}}
\put(20,40){\line(1,-4){10}}

\put(90,0){\circle*{5}}

\put(50,0){\circle*{5}}
\put(70,0){\circle*{5}}
\put(80,0){\circle*{5}}
\put(60,42){\circle{5}}
\put(55,0){\circle*{1}}
\put(60,0){\circle*{1}}
\put(65,0){\circle*{1}}
\put(60,40){\line(-1,-2){20}}
\put(60,40){\line(-1,-4){10}}
\put(60,40){\line(1,-2){20}}
\put(60,40){\line(1,-4){10}}

\put(-4,-8){$x_1$}
\put(11,-8){$\ldots\ x_t$}
\put(48,-8){$y_1$}
\put(61,-8){$\ldots \  y_p$}

\end{picture} 
\end{center}
\caption{The graph $F_{t,p}$ }
\label{fig:speed}
\end{figure}
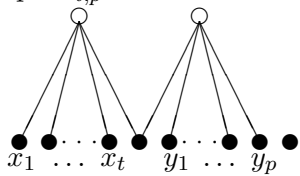

\begin{lemma}\label{lem:3-graphs}
A bipartite graph $H$ is simultaneously a forest and the bipartite complement of a forest if and only if $H$ is an induced subgraph of a $P_7$, of an $S_{1,2,3}$ or of a graph $F_{t,p}$.
\end{lemma}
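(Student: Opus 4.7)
The plan is to split the biconditional into its two directions: the forward implication is a routine verification, while the backward implication needs an edge-counting bound followed by a structural case analysis. For the forward direction I would check that each of $P_7$, $S_{1,2,3}$, and $F_{t,p}$ is a forest (immediate from the drawings) and a bipartite co-forest. A direct diagram check gives $\widetilde{P_7}\cong P_7$, an analogous one shows $\widetilde{S_{1,2,3}}$ is a tree, and $\widetilde{F_{t,p}}$ decomposes as a disjoint union of two stars together with an isolated vertex (the former shared leaf). Since bipartite complementation commutes with taking induced bipartite subgraphs, both the forest and the co-forest property transfer automatically to every induced subgraph, which closes this direction.

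For the backward direction, let $H=(A,B,E)$ be a bipartite forest whose bipartite complement is also a forest. I would begin with a double-counting bound: $|E(H)|+|E(\widetilde H)|=|A|\cdot|B|$, together with $|E(H)|,|E(\widetilde H)|\le|A|+|B|-1$, gives $(|A|-2)(|B|-2)\le 2$. Assuming WLOG $|A|\le|B|$, this restricts $|A|$ to $\{0,1,2,3\}$, and forces $|B|\le 4$ when $|A|=3$; in particular $|V(H)|\le 7$ in the latter range.

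The case analysis then splits into two regimes. When $|A|\le 2$ the part $B$ can be unbounded but the structure is rigid. If $|A|\le 1$, $H$ is a subgraph of a star and trivially embeds in $F_{t,p}$. If $|A|=2$, writing $A=\{u,v\}$, any two common neighbours of $u$ and $v$ yield a $C_4$ in $H$, forcing $|N(u)\cap N(v)|\le 1$; applying the same reasoning to $\widetilde H$ gives $|B\setminus(N(u)\cup N(v))|\le 1$. With these two ``at most one'' sets pinned down, a small case check matches $H$ to an induced subgraph of an appropriate $F_{t,p}$. In the remaining range $|A|=3$, $|B|\le 4$, the edge count is tight, so both $H$ and $\widetilde H$ are spanning trees on $7$ vertices. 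The feasible $A$-degree multisets reduce to $\{1,2,3\}$ and $\{2,2,2\}$, each paired with $B$-degree multiset $\{1,1,2,2\}$; these correspond to $S_{1,2,3}$ and $P_7$ respectively, and smaller graphs embed into these by the preceding analysis or direct inspection.

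The main obstacle is the last regime: even after pinning down the degree sequences, several bipartite trees on $7$ vertices share the correct multisets, and one must verify for each whether the bipartite complement is truly acyclic. The near-miss $S_{1,1,4}$ illustrates the subtlety --- it is a bipartite forest with the ``right'' $A$-degrees $(1,2,3)$, yet its bipartite complement contains a $C_4$ (the two degree-$1$ leaves $u_1,u_2$ of the center get joined in the complement to both $x_2$ and $x_4$) and is thus ruled out. Handling this enumeration cleanly, rather than just matching degree sequences, is where the bulk of the proof effort lies.
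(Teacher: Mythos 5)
Your approach differs from the paper's in a genuine way: the paper bounds the part sizes by observing (from $C_4$-freeness in both $H$ and $\widetilde H$) that same-side vertices share at most one neighbour and at most one non-neighbour, and then splits on whether a vertex of degree $\geq 3$ exists; you instead use the double-count $|E(H)|+|E(\widetilde H)|=|A|\cdot|B|$ together with the forest bound to get $(|A|-2)(|B|-2)\leq 2$, which pins down the sizes more directly. That part is clean and correct.

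There are, however, two genuine gaps in what you have written. First, when $|A|=3$ your bound allows $|B|\in\{3,4\}$, but you only treat the ``tight'' case $|B|=4$; the statement that ``the edge count is tight, so both $H$ and $\widetilde H$ are spanning trees on $7$ vertices'' simply does not apply when $|A|=|B|=3$ (there $|E(H)|+|E(\widetilde H)|=9<2\cdot 5$, so at least one is disconnected), and this subcase needs its own argument rather than being waved at with ``smaller graphs embed into these.'' Second, you explicitly defer the tree enumeration --- you note that degree multisets alone do not suffice (correctly illustrated by $S_{1,1,4}$, which has $A$-degrees $\{1,2,3\}$ and $B$-degrees $\{1,1,2,2\}$ but a $C_4$ in its bipartite complement) and then say this is ``where the bulk of the proof effort lies'' without doing it. That is the heart of the lemma and cannot be left as a remark. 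Until both of these are filled in, the proof is incomplete.

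One small imprecision worth fixing: in the forward direction you describe $\widetilde{F_{t,p}}$ as ``a disjoint union of two stars together with an isolated vertex (the former shared leaf).'' In fact the two stars of $\widetilde{F_{t,p}}$ are not disjoint --- they share the vertex that was isolated in $F_{t,p}$, so $\widetilde{F_{t,p}}\cong F_{p,t}$. The conclusion that it is a forest is of course still correct.

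Finally, a suggestion that would tighten the $|A|=3$, $|B|=4$ analysis: once both $H$ and $\widetilde H$ are spanning trees, all $H$-degrees on the $A$-side lie in $\{1,2,3\}$ and all on the $B$-side lie in $\{1,2\}$ (upper bounds come from $\widetilde H$ being connected, lower bounds from $H$ being connected). This forces the $A$-multiset to be $\{1,2,3\}$ or $\{2,2,2\}$ and the $B$-multiset to be $\{1,1,2,2\}$ by pure arithmetic, which justifies the claim you stated without proof; but the subsequent enumeration of trees with these degree sequences, and the check that the bipartite complement of each candidate is acyclic, still needs to be written out.
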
 

\begin{proof}
The ``if'' part of the proof is obvious. To prove the ``only if'' part, asssume a bipartite graph $H=(A,B,E)$ is simultaneously a forest and the bipartite complement of a forest.
Then any two vertices of the same colour in $H$ share at most one neighbour and at most one non-neighbour. Therefore, if one of the parts of $H$ contains at most two 
vertices then $H$ is an  induced subgraph of a graph $F_{t,p}$ for some $t$ and $p$. 
From now on, we assume that each part of $H$ contains at least three vertices, say $a_1,a_2,a_3\in A$ and $b_1,b_2,b_3\in B$. 

Suppose first that neither $H$ nor $\widetilde{H}$ contain vertices of degree more than 2. Then every connected component of $H$ (and of   $\widetilde{H}$) is a path. 
None of the components of $H$ is trivial (a singleton), since otherwise the opposite part has size at most two, contradicting our assumption. Also, the number of non-trivial components 
is at most 2, since $3P_2=\widetilde{C}_6$. If $H$ is connected, then $H=P_k$ with $k\le 7$, since $\widetilde{P}_8$ contains a $C_4$. If $H$ consists of two components $P_k$ and $P_t$
($k\ge t\ge 2$), then $k,t\le 4$, since otherwise an induced $3P_2=\widetilde{C}_6$ arises. Moreover, if $k=4$, then $t=2$, since otherwise $\widetilde{H}$ contains a vertex of degree at least $3$.
It follows that $H$ is an induced subgraph of $P_7$. If $k\le 3$, then  $H$ is again an induced subgraph of $P_7$.

We are left with the case in which $H$ (or $\widetilde{H}$ -- the cases are symmetric) contains a vertex of degree at least $3$. 
Say this vertex is $a_1$, and that it is adjacent to $b_1,b_2,b_3$ in $H$. Then
each of $a_2$ and $a_3$ has exactly one neighbour in $\{b_1,b_2,b_3\}$ and these neighbours are different, since otherwise an induced $C_4$ or an induced  $\widetilde{C}_4$ arises.
Without loss of generality, let $a_2$ be adjacent to $b_2$ and let $a_3$ be adjacent to $b_3$. 

Assume $A$ contains at least one more vertex, say $a_4$.
Then by the same arguments $a_4$ has exactly one neighbour in $\{b_1,b_2,b_3\}$ and this neighbour must be different from $b_2$ and $b_3$, i.e.\ $a_4$ is adjacent to $b_1$.
But then $a_2,a_3,a_4,b_1,b_2,b_3$ induce $3P_2=\widetilde{C}_6$. This contradiction shows that $A=\{a_1,a_2,a_3\}$. 

Assume $B$ contains at least one more vertex, say $b_4$. Then $b_4$ is not adjacent to $a_1$, since otherwise an induced $C_4$ or an induced  $\widetilde{C}_4$ arises.
Additionally, $b_4$ is adjacent to exactly one of $a_2$ and $a_3$, since otherwise an induced $C_6$ or an induced  $\widetilde{C}_4$ arises.
Without loss of generality, suppose $b_4$ is adjacent to $a_2$. If $B$ contains one more vertex, say $b_5$, then by the same arguments, $b_5$ is adjacent to exactly one of $a_2$ and $a_3$.
If $b_5$ is adjacent to $a_2$, then $a_1,a_3,b_4,b_5$ induce a  $\widetilde{C}_4$, and if $b_5$ is adjacent to $a_3$, then $a_1,a_2,a_3,b_1,b_4,b_5$ induce $3P_2=\widetilde{C}_6$.
A contradiction in both cases shows $B=\{b_1,b_2,b_3,b_4\}$ and hence $H$ is an induced $S_{1,2,3}$. 
\end{proof}

\begin{theorem}\label{thm:speed-dichotomy}
For  a bipartite graph $H$, the class of $H$-free bipartite graphs has at most factorial speed of growth if and only if 
$H$ is an induced subgraph of one of the following graphs: $P_7$, $S_{1,2,3}$ and $F_{p,q}$.  
\end{theorem}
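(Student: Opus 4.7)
My plan is to derive the theorem as a direct synthesis of Lemma~\ref{lem:3-graphs}, which has just been proved, with two classical dichotomy results: Allen's classification of monogenic bipartite classes by speed \cite{Allen}, and the factorial-speed result for $P_7$-free bipartite graphs from \cite{P7}. The role of Lemma~\ref{lem:3-graphs} is to convert the structural condition ``$H$ and $\widetilde{H}$ are both forests'' into the concrete list $\{P_7, S_{1,2,3}, F_{p,q}\}$ of maximal graphs.

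For the ``if'' direction, I would first invoke monotonicity: if $H$ is an induced subgraph of $H'$, then every $H$-free bipartite graph is also $H'$-free, so the class of $H$-free bipartite graphs is contained in the class of $H'$-free bipartite graphs, and its speed is bounded above by that of the latter. It therefore suffices to verify at-most-factorial speed for the three distinguished graphs themselves: for $P_7$ this is the main result of \cite{P7}, while for $S_{1,2,3}$ and $F_{p,q}$ this appears directly in Allen's classification \cite{Allen}.

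For the ``only if'' direction, I would suppose that the class of $H$-free bipartite graphs has at most factorial speed and show that both $H$ and $\widetilde{H}$ must be forests. If $H$ contained an induced cycle (necessarily some $C_{2k}$), then the class of $H$-free bipartite graphs would contain all bipartite graphs of girth exceeding $2k$, a family known from \cite{Allen} to be superfactorial, a contradiction. The statement for $\widetilde{H}$ follows by symmetry once we record that bipartite complementation is a speed-preserving involution on hereditary classes of bipartite graphs (so $H$-free and $\widetilde{H}$-free yield equispeed classes up to this involution). Applying Lemma~\ref{lem:3-graphs} to $H$ then delivers the desired characterisation. The main obstacle I anticipate is purely citational: one has to match Allen's case list to the precise forbidden subgraphs $S_{1,2,3}$ and $F_{p,q}$ and to the cycle-containing superfactorial cases; the structural work has already been absorbed into Lemma~\ref{lem:3-graphs}, so no further combinatorial argument should be needed.
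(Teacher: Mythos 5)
Your proposal is correct and follows essentially the same route as the paper: invoke monotonicity and the results of \cite{Allen} and \cite{P7} for the ``if'' direction, and combine Lemma~\ref{lem:3-graphs} with the superfactorial speed of classes defined by a forbidden cyclic graph (and the bipartite-complementation symmetry) for the ``only if'' direction. The paper phrases the second direction as a contrapositive and is more terse about why a cycle in $H$ or $\widetilde{H}$ forces superfactorial speed, but the underlying argument is identical to yours.
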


\begin{proof}
The factorial speed of $F_{t,p}$-free bipartite graphs and $S_{1,2,3}$-free bipartite graphs was shown in \cite{Allen}, while for $P_7$-free bipartite graphs, it was shown in
\cite{P7}. If $H$ is not an induced subgraph of $P_7$, $S_{1,2,3}$ or $F_{t,p}$, then by Lemma~\ref{lem:3-graphs}, 
either $H$ or $\widetilde{H}$ contains a cycle. It follows that the speed of the class of $H$-free bipartite graphs is superfactorial (this is a well-known fact that can also be found in \cite{Allen}).
\end{proof}

A similar dichotomy of monogenic classes of bipartite graphs with respect to the existence of an implicit representation is not known yet. Since at most factorial speed of growth is a necessary condition for a class to admit an implicit representation, it readily follows from Theorem \ref{thm:speed-dichotomy} that if a class of $H$-free bipartite graphs admits an implicit representation then $H$ is an induced subgraph of $P_7$, $S_{1,2,3}$, or $F_{p,q}$.  
It is known that the class of $S_{1,2,3}$-free bipartite graphs admits an implicit representation because this class has bounded clique-width \cite{skew-star} and graph classes of bounded clique-width admit an implicit representation \cite{Spinrad}. Prior to this work, the question remained open for the other two cases.
In Section \ref{sec:Ftp} we resolve the case of $F_{p,q}$-free bipartite graphs, by showing that
any such class admits an implicit representation.

\paragraph{Chordal bipartite graphs.} 
A bipartite graph is {\it chordal} bipartite if it has no chordless (induced) cycles of length at least $6$. 
The class of $C_4$-free chordal bipartite graphs is precisely the class of forests, which is factorial and admits an implicit representation.
Despite this `closeness' to the class of forests, the class of chordal bipartite graphs is superfactorial \cite{Spi95} and hence does not admit an implicit representation. This makes the class of chordal bipartite graphs a natural area for the study of factorial and implicitly representable graph classes.
 
It is known that classes of $K_{p,q}$-free chordal bipartite graphs have bounded tree-width \cite{chordal-factorial}, and hence admit an implicit representation. Some other subclasses of chordal
bipartite graphs are not known to admit an implicit representation, but they are known to be factorial.
These include classes of chordal bipartite graphs excluding a fixed forest \cite{chordal-bipartite}.
In the present paper we
reveal a number of new factorial subclasses of chordal bipartite graphs and show that some of them admit an implicit representation. Among other results, we show that the class of $S_{2,2,2}$-free chordal bipartite graphs, and any class of chordal bipartite graphs avoiding a fixed chain graph admit an implicit representation.


\subsection{Tools}

Several useful tools to produce an implicit representation have been introduced in \cite{implicit}. 
In this section, we mention two such tools, and generalise one of them.

The first result deals with the notion of locally bounded coverings, which can be defined as follows.
Let $G$ be a graph. A set of graphs $H_1,\ldots,H_k$ is called a {\it covering} of $G$ if the union 
of $H_1,\ldots,H_k$ coincides with $G$, i.e.\ if $V(G)=\bigcup\limits_{i=1}^k V(H_i)$  and 
$E(G)=\bigcup\limits_{i=1}^k E(H_i)$.

\begin{theorem}{\rm \cite{implicit}}\label{thm:local}
Let $\mathcal X$ be a class of graphs and $c$ a constant. If every graph $G\in \mathcal X$ can be covered by graphs from 
a class $\mathcal Y$ admitting an implicit representation in such a way that every vertex of $G$ is covered by at most $c$ graphs, 
then $\mathcal X$ also admits an implicit representation.
\end{theorem}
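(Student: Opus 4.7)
The plan is to construct an implicit representation of a graph $G \in \mathcal X$ by concatenating, for each vertex, its labels in the (boundedly many) covering graphs that contain it. Fix $G \in \mathcal X$ with $n$ vertices and let $H_1,\ldots,H_k \in \mathcal Y$ be a covering of $G$ in which every vertex lies in at most $c$ of the $H_i$. The first step is to bound $k$: after discarding empty $H_i$, double counting gives $k \le \sum_{i=1}^{k} |V(H_i)| = \sum_{v\in V(G)} |\{i : v \in V(H_i)\}| \le cn$, so each index $i$ can be encoded in $O(\log n)$ bits.

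Next I would invoke the implicit representation of $\mathcal Y$ to obtain a universal adjacency-decoding function $f$ together with, for every $H_i$, vertex labels $L_i \colon V(H_i) \to \{0,1\}^{\ell_i}$ of length $\ell_i = O(\log |V(H_i)|) = O(\log n)$. Padding with zeros, I may assume the $L_i$'s have a common length $\ell = O(\log n)$. The label of a vertex $v \in V(G)$ is then the list of pairs $(i, L_i(v))$ over those indices $i$ with $v \in V(H_i)$, padded with a designated ``empty'' marker to exactly $c$ entries. Its total length is $c \cdot (O(\log n) + \ell) = O(\log n)$, since $c$ is a fixed constant.

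The global decoder, given the labels of $u$ and $v$, parses both lists and scans the $c^2$ pairs of entries; it outputs $1$ iff some pair shares a common index $i$ and satisfies $f(L_i(u), L_i(v)) = 1$. Correctness is immediate from $E(G) = \bigcup_i E(H_i)$: the vertices $u$ and $v$ are adjacent in $G$ exactly when some $H_i$ contains both of them and declares them adjacent, which is precisely what the decoder detects.

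I do not expect a serious obstacle here; the only non-trivial ingredient is the bound $k \le cn$, which is what keeps the per-vertex index information short enough to fit inside the $O(\log n)$ budget. The constancy of $c$ is used twice — to bound $k$, and to bound the number of pairs stored in each label — so the hypothesis that $c$ is independent of $G$ is essential.
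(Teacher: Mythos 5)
The paper states Theorem~\ref{thm:local} by citation from \cite{implicit} and does not include its own proof, so there is no in-paper argument to compare against. Your argument is the standard and correct one: bound the number of non-empty covering graphs by $k \le cn$ via double counting, give each vertex a label consisting of its at most $c$ pairs $(i, L_i(v))$, and have the decoder scan the $c^2$ pairs of entries for a shared index and query the $\mathcal Y$-decoder there; the correctness is immediate from $E(G)=\bigcup_i E(H_i)$ and $E(H_i)\subseteq E(G)$. One small technicality: padding the $\mathcal Y$-labels to a common length $\ell$ may not be compatible with the $\mathcal Y$-decoder $f$ if $f$ is only defined on unpadded labels; the routine fix is to also store the original length $\ell_i$ (an extra $O(\log\log n)$ bits per entry) so the decoder can strip the padding before invoking $f$, and this does not affect the $O(\log n)$ label budget.
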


The second result deals with the notion of partial coverings and can be stated as follows. 

\begin{theorem}{\rm \cite{implicit}}\label{thm:partial}
Let $\mathcal X$  be a hereditary class. Suppose there is a constant $d$ and a hereditary class $\mathcal Y$  which admits an implicit representation
such that every graph $G\in \mathcal X$ contains a non-empty subset $A\subseteq V(G)$ with the properties that $G[A]\in \mathcal Y$ and each vertex of $A$ 
has at most $d$ neighbours or at most $d$ non-neighbours in $V(G)-A$. Then $\mathcal X$ admits an implicit representation.
\end{theorem}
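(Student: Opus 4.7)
The plan is an iterative peeling argument. Put $G_1 := G$, and apply the hypothesis to obtain a non-empty $A_1 \subseteq V(G_1)$ with $G_1[A_1] \in \mathcal Y$ such that every $v \in A_1$ has at most $d$ neighbours or at most $d$ non-neighbours in $V(G_1) \setminus A_1$. Since $\mathcal X$ is hereditary, $G_2 := G_1 - A_1$ lies in $\mathcal X$, so the hypothesis can be reapplied. Iterating, we produce a partition $V(G) = A_1 \sqcup A_2 \sqcup \cdots \sqcup A_k$, where $G_i := G - (A_1 \cup \cdots \cup A_{i-1})$ and $G_i[A_i] \in \mathcal Y$ for each $i$.

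For each vertex $v \in A_i$ I would then assign a binary label concatenating: (i) the layer index $i$, padded to $\lceil \log n \rceil$ bits; (ii) a global identifier for $v$ using $\lceil \log n \rceil$ bits; (iii) the code that the implicit representation of $\mathcal Y$ assigns to $v$ within $G_i[A_i]$, padded to $O(\log n)$ bits; (iv) a single mode bit indicating whether $v$ has at most $d$ neighbours or at most $d$ non-neighbours in $V(G_i) \setminus A_i$; and (v) the list of global identifiers of the (at most $d$) witness vertices, using $d \cdot \lceil \log n \rceil = O(\log n)$ bits since $d$ is a constant. The total label length is $O(\log n)$.

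Adjacency between two vertices $u$ and $v$ is decoded from their labels as follows. If both labels carry the same layer index $i$, then $u, v \in A_i$ and adjacency is determined by feeding their embedded $\mathcal Y$-codes to the implicit-representation function of $\mathcal Y$. Otherwise, without loss of generality let $u \in A_i$ and $v \in A_j$ with $i < j$, so that $v \in V(G_i) \setminus A_i$; then $u \sim v$ if and only if ($v$'s global identifier appears in $u$'s witness list) XOR ($u$'s mode bit signals non-neighbours). I do not foresee a real obstacle in this scheme: the only points requiring minor care are the uniform padding (so that all labels have equal length $O(\log n)$ independent of $i$ and of $|A_i|$) and the observation that the heredity of $\mathcal X$ is exactly what permits the hypothesis to be reapplied at each peeling step.
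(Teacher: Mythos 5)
Your proof is correct, and it is essentially the same argument the paper uses for the generalization (Theorem~\ref{thm:partial-general}), specialized to the one-sided case: iteratively peel off the sets $A_i$ guaranteed by the hypothesis (heredity justifies the reapplication), and give each vertex a label consisting of its $\mathcal Y$-code within $G_i[A_i]$ together with a mode bit and pointers to its $\le d$ witnesses in the deeper layers. The only cosmetic difference is that the paper's proof of the generalization uses recursive index ranges because there the recursion branches into two parts $B_1, B_2$; in your one-sided setting the recursion is linear, so your explicit layer index carries exactly the same information more simply.
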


Next we provide a generalisation of Theorem~\ref{thm:partial} that will be useful later.

\begin{theorem}\label{thm:partial-general}
	Let $\mathcal X$ be a hereditary class. Suppose there is a constant $d$ and a hereditary class $\mathcal Y$ which admits an implicit representation so that every graph $G\in \mathcal X$ contains a non-empty subset $A\subseteq V(G)$ with the following properties:
	\begin{enumerate} 
		\item[(1)] $G[A]\in \mathcal Y$, 
		\item[(2)] $V(G)-A$ can be split into two subsets $B_1$ and $B_2$ with no edges between them, and
		\item[(3)] every vertex of $A$ has at most $d$ neighbours or at most $d$ non-neighbours  in $B_1$ and at most $d$ neighbours or at most $d$ non-neighbours in $B_2$.
	\end{enumerate}
	Then $\mathcal X$ admits an implicit representation.
\end{theorem}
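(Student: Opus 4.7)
The plan is to generalise the proof of Theorem~\ref{thm:partial} by replacing its linear peeling with a binary decomposition tree. Starting from $G\in\mathcal{X}$, apply the hypothesis to obtain $A,B_1,B_2$ satisfying (1)--(3), and then recurse on $G[B_1]$ and $G[B_2]$, which both lie in $\mathcal{X}$ by hereditarity. This produces a rooted binary tree $T$ whose nodes $\nu$ are labelled by pairwise disjoint ``peeled'' sets $A_\nu$ partitioning $V(G)$, with $G[A_\nu]\in\mathcal{Y}$ for every $\nu$; the two children of $\nu$ (when present) correspond to the halves $B_1^\nu,B_2^\nu$ of the subproblem at $\nu$.

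Next, I would attach to $T$ a DFS entry/exit interval $[l_\nu,r_\nu]\subseteq[1,2n]$ for each node together with a split value $s_\nu$ that separates the intervals of its two children. Each vertex $u\in A_\nu$ is then given a label of length $O(\log n)$ comprising: (a) a unique identifier in $[n]$; (b) the pair $[l_\nu,r_\nu]$ and the split $s_\nu$; (c) the $\mathcal{Y}$-implicit label of $u$ inside $G[A_\nu]$, available because $G[A_\nu]\in\mathcal{Y}$; (d) for each $i\in\{1,2\}$, a flag from property~(3) stating whether $u$ has at most $d$ neighbours or at most $d$ non-neighbours in $B_i^\nu$, and the corresponding list of at most $d$ identifiers. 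Since $d$ is a constant and each ingredient takes $O(\log n)$ bits, the total label length remains $O(\log n)$.

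The adjacency rule, applied to the codes of $u\in A_\nu$ and $v\in A_\mu$, first compares the two intervals. Two DFS intervals are always equal, strictly nested, or disjoint. If they are equal, then $\nu=\mu$ and we read off adjacency from $\mathcal{Y}$'s implicit rule applied to the embedded $\mathcal{Y}$-labels. If $[l_\mu,r_\mu]\subsetneq[l_\nu,r_\nu]$, then $\nu$ is a strict ancestor of $\mu$; comparing $l_\mu$ against $s_\nu$ (stored in $u$'s code) reveals whether $v\in B_1^\nu$ or $v\in B_2^\nu$, after which we search $v$'s identifier in the corresponding list in $u$'s code and apply the flag. The reverse containment is handled symmetrically using $v$'s code. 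Finally, if the intervals are disjoint, then $\nu$ and $\mu$ lie in opposite children-subtrees of their lowest common ancestor $\rho$, hence $u$ and $v$ lie on opposite sides of the $B_1^\rho\mid B_2^\rho$ split, so property~(2) forces $u\not\sim v$.

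The chief obstacle is compactness: the decomposition tree $T$ can have depth $\Theta(n)$, so naively storing a root-to-$\nu$ path inside $u$'s label would already exceed the $O(\log n)$ budget. The DFS-interval encoding together with a single split value $s_\nu$ per node compresses all of the required ancestor and subtree combinatorics into $O(\log n)$ bits per vertex, after which correctness of the adjacency rule reduces to a short case analysis that invokes properties~(1)--(3) exactly once per case.
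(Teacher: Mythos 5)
Your proposal is correct and follows essentially the same approach as the paper: recursively decompose along $(A,B_1,B_2)$, encode each vertex's position in the resulting binary decomposition tree in $O(\log n)$ bits via intervals, and combine that with the $\mathcal Y$-label for $G[A_\nu]$ plus the constant-size neighbour/non-neighbour lists from property~(3). The only cosmetic difference is that the paper folds the tree position into the vertex indices themselves (assigning $B_1$ the low indices, $A$ the middle, $B_2$ the high, and recursing), whereas you carry a DFS entry/exit interval, a split value, and a separate vertex identifier; both realise the same ancestor/descendant/disjoint trichotomy and yield the same adjacency decoder.
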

\begin{proof}
	Let $G$ be an $n$-vertex graph in $\mathcal X$.
	We assign to the vertices of $G$ pairwise distinct \emph{indices} recursively as follows.
	Let $\{ 1, 2, \ldots, n \}$ be the \emph{index range} of $G$, and let $A$, $B_1$, and $B_2$
	be the partition of $V(G)$ satisfying the conditions (1)-(3) of the theorem.
	We assign to the vertices in $A$ indices from the interval $\{ |B_1|+1, |B_1|+2, \ldots, n-|B_2| \}$
	bijectively in an arbitrary way.
	We define the indices of the vertices in $B_1$ recursively by decomposing $G[B_1]$ and using the interval
	$\{ 1, 2, \ldots, |B_1| \}$ as its index range. Similarly, we define the indices of the vertices in $B_2$
	by decomposing $G[B_2]$ and using the interval $\{ n-|B_2|+1, n-|B_2|+2, \ldots, n \}$ as its index range.
	
	Now, for every vertex $v \in A$ its label consists of six components:
	\begin{enumerate}
		\item the label of $v$ in the implicit representation of $G[A] \in \mathcal Y$;
		\item the index of $v$;
		\item the index range of $B_1$, which we call the \emph{left index range} of $v$;
		\item the index range of $B_2$, which we call the \emph{right index range} of $v$;
		\item a boolean flag indicating whether $v$ has at most $d$ neighbours or $d$ non-neighbours in $B_1$ and the indices of those at most $d$ vertices;
		\item a boolean flag indicating whether $v$ has at most $d$ neighbours or $d$ non-neighbours in $B_2$ and the indices of those at most $d$ vertices.
	\end{enumerate}
	For the third and the fourth component we store only the first and the last elements of the ranges,
	and therefore the total label size is $O(\log n)$.
	The labels of the vertices in $B_1$ and $B_2$ are defined recursively.
	
	Note that two vertices can only be adjacent if either they have the same left and right index ranges 
	or the index of one of the vertices is contained in the left or right index range of the other vertex.
	In the former case, the adjacency of the vertices is determined by the labels in the first components
	of their labels. In the latter case, the adjacency is determined using the information stored in the components 5 and 6 of the labels.
\end{proof}



In the context of bipartite graphs, Theorem \ref{thm:partial-general} can be adapted as follows.

\begin{theorem}\label{thm:partial-bip}
	Let $\mathcal X$ be a hereditary class of bipartite graphs. Suppose there is a constant $d$ and a hereditary class $\mathcal Y$ which admits an implicit representation so that every graph $G\in \mathcal X$ contains a non-empty subset $A\subseteq V(G)$ with the following properties:
	\begin{enumerate} 
		\item[(1)] $G[A]\in \mathcal Y$, 
		\item[(2)] $V(G)-A$ can be split into two subsets $B_1$ and $B_2$ with no edges between them, and
		\item[(3)] every vertex $v$ of $A$ has at most $d$ neighbours or at most $d$ non-neighbours in the  part of $B_1$ which is \emph{opposite} to the part of $A$ containing $v$, 
and at most $d$ neighbours or at most $d$ non-neighbours in the  part of $B_2$  which is \emph{opposite} to the part of $A$ containing $v$.
	\end{enumerate}
	Then $\mathcal X$ admits an implicit representation.
\end{theorem}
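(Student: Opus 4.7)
The plan is to adapt the recursive labeling scheme from the proof of Theorem~\ref{thm:partial-general}, exploiting the fact that in a bipartite graph two vertices of the same part are automatically non-adjacent, so only cross-part adjacencies need to be decoded. Given $G\in\mathcal X$ with bipartition $V(G)=X\cup Y$, we find a set $A\subseteq V(G)$ satisfying conditions (1)--(3) and write $V(G)\setminus A=B_1\sqcup B_2$ with no edges between $B_1$ and $B_2$. We assign indices exactly as in Theorem~\ref{thm:partial-general}: the middle slice $\{|B_1|+1,\dots,n-|B_2|\}$ to $A$ (arbitrarily), and the left/right slices to $B_1$ and $B_2$, defined recursively by decomposing $G[B_1]$ and $G[B_2]$.

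For each vertex $v\in A$, the label has the same six components as before, augmented with one extra bit indicating which part of the bipartition $v$ belongs to; crucially, the size-$d$ exceptional lists stored in the fifth and sixth components are restricted to vertices lying in the part of $B_i$ \emph{opposite} to the part containing $v$. This restriction is well-defined precisely by hypothesis (3). Vertices in $B_1$ and $B_2$ receive their labels recursively, inheriting the bipartition bit from the ambient graph. Since we only store two endpoints per index range and at most $d$ additional indices, each label is still of size $O(\log n)$.

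Adjacency is decoded as follows. Given two labels belonging to vertices $u$ and $v$, first compare the bipartition bits: if they agree, declare $u$ and $v$ non-adjacent. Otherwise proceed as in the general proof: if the left and right index ranges in both labels coincide, then $u,v$ both lie in the same recursive block which equals $A$ at some level of the recursion, and adjacency is read off from the $\mathcal Y$-labels stored in the first component; otherwise the index of one vertex, say $u$, lies inside the left or right index range of the other vertex $v\in A$ at some level, and since $u$ belongs to the opposite part of the corresponding $B_i$, its index either appears in $v$'s exceptional list (so adjacency is determined by the stored flag) or does not (so adjacency is the complement of the flag).

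The main point requiring care is simply the bookkeeping around component~(v): we must ensure that when we look up $u$ in $v$'s list, $u$ is indeed in the opposite part of $B_i$ so that the small-set guarantee of condition~(3) applies; the bipartition bit stored in $u$'s label makes this check trivial. No genuinely new combinatorial idea is needed beyond Theorem~\ref{thm:partial-general}; the bipartite version is strictly weaker in its hypothesis (3) only because same-part adjacencies are handled for free, and the proof tracks this by a single extra bit per label.
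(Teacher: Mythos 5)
Your proof is correct and is exactly the adaptation the paper has in mind: the paper states Theorem~\ref{thm:partial-bip} without proof as an immediate bipartite variant of Theorem~\ref{thm:partial-general}, and your scheme (add one bipartition bit per label, declare same-part pairs non-adjacent outright, and restrict the two size-$d$ exceptional lists to the opposite part of $B_1$ and $B_2$ respectively, which is precisely what condition (3) licenses) is the natural way to carry out that adaptation. The bookkeeping and label-size analysis are the same as in the general proof, and your decoder covers all the required cases.
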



\section{Graph parameters}
\label{sec:parameters}


It is easy to see that classes of bounded vertex degree admit an implicit representation.
More generally, bounded degeneracy in a class provides us with an implicit representation,
where the {\it degeneracy} of a graph $G$ is the minimum $k$ such that every induced subgraph of $G$ contains a vertex of degree at most $k$.


Spinrad showed in \cite{Spinrad} that bounded clique-width also yields an implicit representation. The recently introduced 
parameter {\it twin-width} generalizes clique-width in the sense that bounded clique-width implies bounded twin-width,
but not vice versa. It was shown in \cite{twin} that bounded twin-width also implies the existence of an implicit representation. 

The notion of graph functionality, introduced in \cite{functionality},  generalizes both degeneracy and twin-width 
in the sense that bounded degeneracy or bounded twin-width implies bounded functionality, but not vice versa. 
The graphs of bounded functionality have at most factorial speed of growth \cite{implicit}. However,
whether they admit an implicit representation is wide-open. To approach this question, in Section~\ref{sec:sd} 
we analyse a parameter intermediate between twin-width and functionality. Then in Sections~\ref{sec:cpn} and~\ref{sec:dspn}, we introduce more parameters 
and report some results concerning them.  

\subsection{Symmetric difference}
\label{sec:sd}

Let $G$ be a graph. Given two vertices $x, y$, we define the {\em symmetric difference} of $x$ and $y$ in $G$
as the number of vertices in $V(G)- \{x, y\}$ adjacent to exactly one of $x$ and $y$, and we denote 
it by $\sd(x, y)$. We define the symmetric difference $\sd(G)$ of $G$ as the smallest number such that 
any induced subgraph of $G$ has a pair of vertices with symmetric difference at most $\sd(G)$. 

This parameter was introduced in \cite{functionality}, where it was shown that bounded clique-width implies bounded symmetric difference.
Paper \cite{functionality} also identifies a number of classes of bounded symmetric difference. Below we reveal more classes 
where this parameter is bounded.

The first result deals with classes of graphs of bounded {\em contiguity}. This includes, for instance, bipartite permutation graphs, 
which have contiguity 1 \cite{branstadt-graph-classes}. The notion of contiguity was introduced in \cite{contiguity} and was motivated 
by the need for compact representations of graphs in computer memory. 
One approach to achieving this goal is finding a linear order of the vertices in which the neighbourhood of each vertex forms an interval. 
Not every graph admits such an ordering, in which case one can relax this requirement by looking for an ordering in which the neighbourhood 
of each vertex can be split into at most $k$ intervals. The minimum value of $k$ which allows a graph $G$ to be represented in this way is the {\it contiguity} of $G$, denoted $\cont(G)$.  

\begin{theorem}\label{thm:symdif}
	For any $k \geq 1$, any graph of contiguity $k$ has symmetric difference at most $2k$.
\end{theorem}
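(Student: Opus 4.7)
The plan is to fix a contiguity-$k$ ordering $v_1, \ldots, v_n$ of $G$, analyse the symmetric differences of consecutive pairs $v_i, v_{i+1}$ in this ordering, and extract one pair with $\sd(v_i, v_{i+1}) \leq 2k$ by a double-counting and averaging argument.

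First I would observe that contiguity is monotone under induced subgraphs: the restriction of a contiguity-$k$ ordering of $G$ to any $S \subseteq V(G)$ gives an ordering in which each $G[S]$-neighbourhood is a union of at most $k$ intervals, since the intersection of an interval with $S$ is still an interval of the restricted order. Hence it suffices to exhibit in $G$ itself a pair of vertices with symmetric difference at most $2k$.

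For each vertex $v_j$, consider the $0/1$ sequence $(a^{(j)}_m)_{m=1}^n$ with $a^{(j)}_m = 1$ iff $v_m \in N(v_j)$. Since $N(v_j)$ is a union of at most $k$ intervals in the ordering, this sequence has at most $2k$ transitions, i.e.\ indices $i$ with $a^{(j)}_i \neq a^{(j)}_{i+1}$. The key point is that for $j \notin \{i, i+1\}$, the vertex $v_j$ contributes to $\sd(v_i, v_{i+1})$ exactly when $a^{(j)}_i \neq a^{(j)}_{i+1}$, so double counting the pairs $(i,j)$ with $1 \leq i \leq n-1$, $j \notin \{i, i+1\}$, and $a^{(j)}_i \neq a^{(j)}_{i+1}$ yields
\[
\sum_{i=1}^{n-1} \sd(v_i, v_{i+1}) \;\leq\; \sum_{j=1}^{n} 2k \;=\; 2kn.
\]

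Averaging over the $n-1$ consecutive pairs bounds the minimum symmetric difference by $\frac{2kn}{n-1} = 2k + \frac{2k}{n-1}$. For $n \geq 2k+2$ this is strictly less than $2k+1$, so by the integrality of $\sd$ some consecutive pair satisfies $\sd(v_i, v_{i+1}) \leq 2k$; for $n \leq 2k+1$ any pair trivially satisfies $\sd(x,y) \leq n-2 \leq 2k-1$. The subtle point I would watch out for is precisely this integrality step: the averaging only forces the minimum strictly below $2k+1$, so one must appeal to $\sd$ taking integer values and treat the small-$n$ regime separately to conclude the clean bound of $2k$.
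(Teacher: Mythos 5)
Your proof is correct and matches the paper's approach essentially exactly: both sum $\sd(v_i,v_{i+1})$ over consecutive pairs, bound each vertex's total contribution by $2k$ via the at-most-$2k$ interval boundaries, and average to extract a low-symmetric-difference pair. You are somewhat more careful than the paper in two places — explicitly noting that contiguity is monotone under induced subgraphs (needed to reduce from the $\sd(G)$ definition to finding a single pair in $G$) and spelling out the integrality step in the averaging (the paper compresses this into a parenthetical "unless $n\leq 2k+1$") — but these are elaborations of the same argument rather than a different route.
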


\begin{proof}
	It suffices to show that any graph $G$ of contiguity $k$ has a pair of vertices with symmetric difference at most $2k$. 
Let $x_1, \dots, x_n$ be a linear order of the vertices in which the neighbourhood of every vertex consists of at most $k$ intervals, and let
	$$
		S := \sum\limits_{i = 1}^{n - 1} \sd(x_i, x_{i + 1}).
	$$
	Since the neighbourhood of an arbitrary vertex $y$ consists of at most $k$ intervals in the linear order, there are 
	at most $2k$ pairs of consecutive vertices $x_i, x_{i+1}$ such that $y$ is adjacent to one of them, but not
	adjacent to the other. Therefore, $y$ contributes at most $2k$ to $S$, and hence $S \leq 2kn$.
	Since there are $n - 1$ terms in the sum, one of them must be at most $2k$ (unless $n \leq 2k + 1$, in which case the statement is trivial).
\end{proof}

The second result deals with classes of $F_{t,p}$-free bipartite graphs (see Figure~\ref{fig:speed} for an illustration of $F_{t,p}$). 
These classes have unbounded clique-width for all $t,p\ge 2$. To show that they have bounded symmetric difference,
we assume without loss of generality that $t=p$.

\begin{theorem}\label{thm:ftt}
For each $t\ge 2$, every $F_{t,t}$-free bipartite graph $G=(B,W,E)$ has symmetric difference at most $2t$.
\end{theorem}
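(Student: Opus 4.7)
The plan is to adapt the averaging argument of Theorem~\ref{thm:symdif}, substituting its contiguity-based ordering with an ordering by degree. First I translate the hypothesis into a usable neighborhood inequality. Since $F_{t,t}=K_{1,t}\cup K_{1,t}$ places its two centers in one bipartition class, an induced copy with centers $c_1,c_2\in B$ exists exactly when $|N(c_1)\setminus N(c_2)|\ge t$ and $|N(c_2)\setminus N(c_1)|\ge t$, the private neighbors on each side being automatically disjoint and non-adjacent to the opposite center. Hence $G=(B,W,E)$ is $F_{t,t}$-free if and only if $\min(|N(u)\setminus N(v)|,|N(v)\setminus N(u)|)\le t-1$ for every pair of same-part vertices $u,v$.

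Because the property is hereditary, it is enough to find one pair with symmetric difference at most $2t$ in an arbitrary $F_{t,t}$-free bipartite $G$; applying the same argument to induced subgraphs yields $\sd(G)\le 2t$. I order the vertices of $B$ by non-decreasing degree as $x_1,\ldots,x_m$. The elementary identity
\[
|N(x_{i+1})\setminus N(x_i)|-|N(x_i)\setminus N(x_{i+1})|=\deg(x_{i+1})-\deg(x_i)\ge 0
\]
forces the minimum in the $F_{t,t}$-free condition to be attained by $|N(x_i)\setminus N(x_{i+1})|\le t-1$, and since $x_i,x_{i+1}$ both lie in the independent set $B$ I obtain
\[
\sd(x_i,x_{i+1})=2|N(x_i)\setminus N(x_{i+1})|+(\deg(x_{i+1})-\deg(x_i))\le 2(t-1)+(\deg(x_{i+1})-\deg(x_i)).
\]
Summing telescopes the degree differences to at most $|W|$, so the minimum consecutive symmetric difference is at most $2(t-1)+|W|/(|B|-1)$, which is at most $2t$ whenever $|W|\le 2(|B|-1)$.

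If that inequality fails I repeat the argument on $W$; a short calculation shows that $|W|>2(|B|-1)$ and $|B|>2(|W|-1)$ cannot both hold unless $|B|+|W|\le 3$. The remaining degenerate cases (small $n$, or one part of size at most $1$) are trivial, since then any pair has symmetric difference at most $n-2\le 2t$ or at most the size of the smaller part. The only real subtlety --- and the one point needing any thought --- is identifying the right ordering: unlike the contiguity setting of Theorem~\ref{thm:symdif}, no layout is provided, and it is the interplay between the degree order and the $F_{t,t}$-free condition that produces the clean cancellation driving the averaging.
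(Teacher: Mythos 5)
Your proof has a genuine gap at the very first step: you misidentify the graph $F_{t,t}$. You write $F_{t,t} = K_{1,t} \cup K_{1,t}$, but $F_{t,t}$ is not merely an unbalanced $2K_{1,t}$. As Figure~\ref{fig:speed} shows, and as is used explicitly elsewhere in the paper (e.g.\ the proofs of Lemma~\ref{lem:ftt-1} and Theorem~\ref{thm:ftt-0}, and the remark $3P_2 = \widetilde{C}_6$-type counting in Lemma~\ref{lem:3-graphs}), $F_{t,t}$ consists of two disjoint stars $K_{1,t}$ with both centres on the same side of the bipartition, \emph{together with} an extra vertex on the opposite side adjacent to both centres \emph{and} an isolated vertex on that opposite side; it has $2t+4$ vertices, not $2t+2$. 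Consequently your ``if and only if'' translation is wrong in the direction you need: $F_{t,t}$-freeness does \emph{not} imply $\min(|N(u)\setminus N(v)|,|N(v)\setminus N(u)|) \le t-1$ for all same-part pairs $u,v$. A pair with both private neighbourhoods of size $\ge t$ only yields an induced $F_{t,t}$ if $u$ and $v$ additionally have a common neighbour and a common non-neighbour on the opposite side. Your averaging over consecutive degree pairs has no mechanism to guarantee those two extra witnesses, so the key inequality $|N(x_i)\setminus N(x_{i+1})| \le t-1$ is simply not available, and the telescoping estimate collapses. What you have proved is the (weaker, and much easier) statement for graphs with no unbalanced induced $2K_{1,t}$, a strictly smaller class.

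The paper's proof is built precisely to manufacture those two extra witnesses. It chooses $x,y$ in one part with $\deg(x)-\deg(y)$ equal to the maximum degree gap in that part, sets $X := N(x)\setminus N(y)$, and then finds two vertices $p,q \in X$ with $\dd(p,q)\le 1$ via a pigeonhole estimate on $X$ (using that the degree spread inside $X$, living in the other part, is at most the degree spread in the original part, which is at most $|X|$). The crucial point is that $p$ and $q$ both lie in $N(x)$ and both lie outside $N(y)$: hence $x$ serves as the common neighbour and $y$ as the isolated vertex of the would-be $F_{t,t}$, and now the $F_{t,t}$-free hypothesis really does force $\min(|N(p)\setminus N(q)|,|N(q)\setminus N(p)|) \le t-1$, which together with $\dd(p,q)\le 1$ gives $\sd(p,q)\le 2t$. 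Your degree-ordering idea is in a loosely similar spirit, but without anchoring the pair $p,q$ inside a set of the form $N(x)\setminus N(y)$ the hypothesis cannot be invoked.
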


\begin{proof}
It is sufficient to show that $G$ has a pair of vertices with symmetric difference at most $2t$.
For two vertices $x,y$, we denote by $\dd(x,y)$ the degree difference $|\deg(x)-\deg(y)|$ and for a subset $U\subseteq V(G)$, we write $\dd(U):=\max\{\dd(x,y)\ :\  x,y\in U\}$. 
Assume without loss of generality that $\dd(W)\le \dd(B)$ and let $x,y$ be two vertices in $B$ with $\dd(x,y)=\dd(B)$, $\deg(x)\ge \deg(y)$. 

Write $X:=N(x)-N(y)$. Clearly, $\dd(B)\le |X|$. If $|X|\le 2$, then $\sd(x,y)\le 4\le 2t$ and we are done.


Now assume $|X|\ge 3$. First observe that $\dd(X)\le \dd(W)\le \dd(B)\le |X|$. Let $x_1, x_2, \ldots, x_{|X|}$ be a sequence of
vertices of $X$ with decreasing degree order, i.e. $\deg(x_i) \geq \deg(x_j)$ for all $i>j$. 
Since $\sum_{i=1}^{|X|-1} (\deg(x_i) - \deg(x_{i+1})) = \dd(X) \leq |X|$, and each summand is non-negative, by the Pigeonhole principle 
it easily follows that $d(x_i)-d(x_{i+1}) \le 1$ for some $i$. So $X$ contains two vertices 
$p$ and $q$ with with $\dd(p,q)\le 1$. Finally, we conclude that $\sd(p,q)\le 2t$, since otherwise both $P:=N(p)-N(q)$ 
and $Q:=N(q)-N(p)$ have size at least $t$, in which case $x,y,p,q$ together 
with $t$ vertices from $P$ and $t$ vertices from $Q$ induce the forbidden graph $F_{t,t}$.

\end{proof}

The symmetric difference is also bounded in the class of $S_{1,2,3}$-free graphs, since these graphs have bounded clique-width \cite{skew-star}.
For the remaining class from Theorem~\ref{thm:speed-dichotomy}, i.e.\ the class of $P_7$-free bipartite graphs, the boundedness of symmetric difference 
is an open question.

\begin{conjecture}\label{con:1}
The symmetric difference is bounded in the class of $P_7$-free bipartite graphs.
\end{conjecture}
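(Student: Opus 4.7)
The plan is to follow the template of the proof of Theorem~\ref{thm:ftt}: assume for contradiction that $G=(B,W,E)$ is a $P_7$-free bipartite graph whose symmetric difference is at least $N$, for $N$ arbitrarily large, and produce an induced $P_7$. First, apply the averaging argument of Theorem~\ref{thm:ftt} to $G$: with $\dd(W)\le \dd(B)$ (without loss of generality), one obtains vertices $x, y\in B$ with $\dd(x,y)=\dd(B)$ and $\deg(x)\ge\deg(y)$. Writing $X:=N(x)\setminus N(y)$ and $Y:=N(y)\setminus N(x)$, the hypothesis $\sd(x,y)=|X|+|Y|\ge N$ forces at least one of $X,Y$ to have size $\Omega(N)$; by iterating the averaging on the side with larger degree spread, one gets that both $X$ and $Y$ may be assumed large.

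Next, apply the same averaging argument inside $X$ and $Y$ (which live in $W$) to obtain $p,q\in X$ with $\dd(p,q)\le 1$ and $p',q'\in Y$ with $\dd(p',q')\le 1$. The largeness of $\sd(p,q)$ and $\sd(p',q')$ forces the four sets
\[
P:=N(p)\setminus N(q),\quad Q:=N(q)\setminus N(p),\quad P':=N(p')\setminus N(q'),\quad Q':=N(q')\setminus N(p'),
\]
each contained in $B\setminus\{x,y\}$, to contain $\Omega(N)$ vertices.

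The combinatorial heart of the argument is to assemble these ingredients into an induced $P_7$ of the form $a - p - x - z - y - p' - a'$, where $z$ is a common neighbour of $x$ and $y$, $a\in P$, and $a'\in P'$. Bipartiteness and the definitions of $X,Y,P,P'$ supply most non-adjacencies for free; the non-trivial non-edges that remain to arrange are $a\nsim z$, $a\nsim p'$, $p\nsim a'$, and $z\nsim a'$. Since $|P|$ and $|P'|$ are both $\Omega(N)$, the hope is to choose $a\in P$ and $a'\in P'$ that avoid the bounded set of forbidden neighbours by a pigeonhole argument, possibly after first iterating the averaging within $N(x)\cap N(y)$ to pick $z$ of low degree into $P\cup P'$, and similarly restricting the choice of $p,q,p',q'$ so that $p'$ has small degree inside $P$ and $p$ has small degree inside $P'$.

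The main obstacle is precisely this extension step: in contrast to $F_{t,t}$, the graph $P_7$ is not local, and its four required non-trivial non-edges span the whole candidate path. A priori the degree of $z$ or of $p'$ into $P$ could be as large as $|P|$ itself, so a naive pigeonhole may fail; the choices of $x,y,z,p,q,p',q'$ must be coordinated. To overcome this, one probably needs to iterate the reduction several times in the spirit of a Ramsey-type cleanup, or else invoke the structural/decomposition results for $P_7$-free bipartite graphs from \cite{P7} underlying the factorial-speed proof, reducing to chain-graph-like pieces where the symmetric difference is trivially bounded. I expect this coordination of non-adjacencies along a length-$6$ path to be the crux of any proof of Conjecture~\ref{con:1}.
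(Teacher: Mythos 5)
This statement is Conjecture~\ref{con:1}, which the paper explicitly leaves \emph{open}: the authors state it without proof, remark immediately before it that ``the boundedness of symmetric difference is an open question'' for $P_7$-free bipartite graphs, and repeat in the Conclusion that this is one of the unresolved questions they raise. There is therefore no proof in the paper against which your attempt can be compared; a complete argument would be a new result, not a reconstruction.

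Your sketch is a reasonable exploration, but it has gaps beyond the one you flag yourself. You are right that coordinating the four non-trivial non-edges $a\not\sim z$, $a\not\sim p'$, $p\not\sim a'$, $z\not\sim a'$ is the crux and that you have not resolved it. Two earlier steps are also problematic. First, the claim that ``by iterating the averaging on the side with larger degree spread, one gets that both $X$ and $Y$ may be assumed large'' does not follow: the averaging of Theorem~\ref{thm:ftt} produces $x,y$ with $\dd(x,y)=\dd(B)$ and controls only the larger side $X=N(x)\setminus N(y)$; if $y$ has small degree then $Y=N(y)\setminus N(x)$ can be empty even though $\sd(x,y)=|X|+|Y|$ is huge, and further averaging inside $X$ says nothing about $Y$. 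Second, your candidate path $a-p-x-z-y-p'-a'$ requires a common neighbour $z$ of $x$ and $y$, yet nothing in your setup guarantees $N(x)\cap N(y)\neq\emptyset$. In short, this is an honest outline of a possible line of attack, not a proof; the conjecture remains open both in the paper and after your attempt.
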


We also conjecture that every class of graphs of bounded symmetric difference admits an implicit representation.

\begin{conjecture}\label{con:2}
Every class of graphs of bounded symmetric difference admits an implicit representation. 
\end{conjecture}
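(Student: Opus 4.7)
The approach I would take is a recursive peeling induced by the bounded symmetric difference assumption. Let $\mathcal X$ be a hereditary class with $\sd(G) \leq k$ for every $G \in \mathcal X$, and let $G \in \mathcal X$ have $n$ vertices. Iteratively applying the definition of $\sd$, I would construct an ordering $v_1, \ldots, v_n$ of $V(G)$ together with, for each $i \geq 2$, a parent index $\pi(i) < i$ and a difference set $D_i \subseteq \{v_1, \ldots, v_{i-1}\} \setminus \{v_{\pi(i)}\}$ of size at most $k$, such that for every $v_j$ in this domain one has $v_j \sim v_i$ if and only if exactly one of $v_j \sim v_{\pi(i)}$ and $v_j \in D_i$ holds. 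Let $b_i$ additionally record whether $v_i \sim v_{\pi(i)}$.

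The natural label of $v_i$ would then be the tuple $(i, \pi(i), b_i, D_i)$, which occupies $O(k \log n) = O(\log n)$ bits. Given two vertices $v_i$ and $v_j$ with $i < j$, the adjacency $v_i \sim v_j$ is immediately readable from the label of $v_j$ whenever $v_i = v_{\pi(j)}$ or $v_i \in D_j$; otherwise, the equivalence above reduces the query $v_i \sim v_j$ to the query $v_i \sim v_{\pi(j)}$. The first concrete task is thus to set up this bookkeeping cleanly; the remaining work is to justify that the adjacency between $v_i$ and $v_j$ can actually be decoded from their two labels alone.

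The main obstacle is precisely the last point. The reduction $v_i \sim v_j \longleftrightarrow v_i \sim v_{\pi(j)}$ requires, in general, consulting the label of the third vertex $v_{\pi(j)}$, which is forbidden for an implicit representation. To overcome it, I would pursue one of three routes. The simplest is to refine the peeling so that the rooted forest defined by $\pi$ has depth $O(\log n)$, so that the entire ancestor path of each vertex together with the associated difference lists can be stored inside its $O(\log n)$-bit label; bounded symmetric difference alone does not obviously provide such a balanced peeling. A more structural route is to partition $V(G)$ into blocks that interact with each other via $O(1)$ degrees or codegrees and apply Theorem~\ref{thm:partial-general} inductively, using bounded symmetric difference to control these interactions. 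A third, more ambitious route is to promote the peeling into a genuine contraction sequence and then invoke the implicit representation of bounded twin-width classes \cite{twin}; this would essentially require proving that bounded symmetric difference implies bounded twin-width, a statement the paper explicitly avoids claiming.

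I expect the depth (or compressibility) control of the peeling forest to be the principal difficulty: bounded symmetric difference is a purely local condition and does not by itself prevent very tall and narrow peeling trees, so some genuinely new structural argument seems necessary to pass from the local guarantee to a global labelling scheme. This is consistent with the statement being open.
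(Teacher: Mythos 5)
The statement you were asked to prove is in fact Conjecture~\ref{con:2} in the paper; it is left open, and the paper offers no proof. You correctly recognized this, and your analysis of why a proof is not forthcoming is sound. The peeling you describe---iteratively extracting a vertex $v_i$, a parent $v_{\pi(i)}$, and a difference set $D_i$ of size at most $k$---is exactly the standard way to exploit bounded symmetric difference, and it does yield an $O(n\log n)$-bit encoding of $G$ (hence at most factorial speed, which is the content of the relevant observation in \cite{functionality} and is used implicitly in the proofs of Theorems~\ref{thm:Y} and~\ref{thm:Z}). You also put your finger on precisely the right obstruction: decoding $v_i\sim v_j$ from the two labels alone requires following the $\pi$-chain through third vertices, and nothing in the definition of $\sd$ bounds the depth of that chain or lets you compress it into $O(\log n)$ bits per vertex. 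Your three escape routes (balancing the peeling forest, a recursive partition invoking Theorem~\ref{thm:partial-general}, or proving that bounded $\sd$ implies bounded twin-width) are all reasonable directions, and the paper does not resolve any of them; what the paper does instead is verify the conjecture only for the special case of $F_{t,p}$-free bipartite graphs (Theorem~\ref{thm:ftt-0}, following Theorem~\ref{thm:ftt}), by ad hoc structural arguments rather than by a general scheme derived from bounded symmetric difference. So your proposal does not close the gap, but it identifies the gap accurately and should not be read as a failed proof of a known result---the result is genuinely open.
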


We will verify this conjecture for the classes of $F_{t,p}$-free bipartite graphs
in Section~\ref{sec:implicit}. 

\subsection{Chain partition number}
\label{sec:cpn}


Let $G$ be a graph and let $k$ be the minimum number of subsets in a partition of $V(G)$ into homogeneous sets 
such that the edges between any pair of subsets form a chain graph. We call $k$ the {\it chain partition number} of $G$.  

This notion was never formally introduced in the literature, but it implicitly appeared in \cite{Aistis}, where the author 
proved a result which can be stated as follows. 

\begin{theorem}{\rm \cite{Aistis}}\label{thm:cpn}
The chain partition number is unbounded in a hereditary class $\mathcal X$ if and only if $\mathcal X$ contains at least one of the following six classes:
the class $\mathcal M$ of (bipartite) graphs of vertex degree at most 1, the class $\widetilde{\mathcal M}$ of the bipartite complements of graphs in $\mathcal M$, 
the classes of complements of graphs in $\mathcal M$ and $\widetilde{\mathcal M}$, 
and two related subclasses of split graphs obtained from graphs in $\mathcal M$ and $\widetilde{\mathcal M}$ by creating a clique in one of the parts of their bipartition.
\end{theorem}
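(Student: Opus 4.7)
The statement is a biconditional and I will treat the two directions separately, with the harder work concentrated on the ``only if'' part.

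For the ``if'' direction, I would first observe that the chain partition number is monotone under taking induced subgraphs, so it suffices to show that each of the six listed classes has unbounded chain partition number. The central computation is for $\mathcal M$: in an induced matching $nK_2$, any clique part of a homogeneous partition contains at most one matching edge (since cliques in $nK_2$ have at most two vertices), and between any two parts the induced bipartite subgraph is a submatching, which is a chain graph only if it contains at most one edge. Consequently the $n$ matching edges can live in at most $k + \binom{k}{2}$ locations, forcing $k = \Omega(\sqrt{n})$. The remaining five classes reduce to $\mathcal M$ via three operations: complementation, bipartite complementation, and the split modification that promotes one side of a bipartition to a clique. One checks routinely that each operation preserves the chain partition number up to the obvious reinterpretation of parts (a homogeneous set maps to a homogeneous set, and a chain graph between two parts maps to a chain graph, possibly after swapping ``clique'' with ``independent set'').

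For the ``only if'' direction I would argue by contrapositive via a Ramsey-style extraction. Assume that the chain partition number in $\mathcal X$ is unbounded, and for each $n$ pick $G_n \in \mathcal X$ whose chain partition number exceeds a suitably large Ramsey-type function $R(n)$. Build an induced subgraph of $G_n$ iteratively: pick $v_1 \in V(G_n)$, apply Ramsey's theorem to find a large homogeneous set $U_1 \subseteq V(G_n) \setminus \{v_1\}$ on which $v_1$ is either universal or anti-universal; pick $v_2 \in U_1$ and refine to a homogeneous $U_2 \subseteq U_1 \setminus \{v_2\}$ on which $v_2$ is universal or anti-universal; continue. After $N$ steps one obtains vertices $v_1,\dots,v_N$ together with a nested chain of homogeneous sets whose joint adjacency pattern is completely prescribed by the ``type vector'' recorded at each step. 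A final Ramsey application on the adjacency among the $v_i$ themselves, combined with pigeonholing on the type vector, produces a large induced subgraph whose adjacency is determined by a constant-size pattern. The six listed classes are exactly the possible outputs of this construction, up to the symmetries of complementation and bipartite complementation.

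The main obstacle will be the bookkeeping in this case analysis: several independent binary choices, namely the homogeneity type of each $U_i$, the universal-or-anti-universal type of each $v_i$, and global choices of complementation and of split modification, produce a priori many configurations, most of which coincide after symmetry but each of which must be checked to lie in one of the six listed classes rather than something outside. The cleanest strategy is to set up, once and for all, a dictionary mapping ``type vectors'' to the six classes, argue that every output of the extraction admits an entry in this dictionary, and verify that each entry corresponds to exactly one of the six classes. This case analysis, rather than the Ramsey-extraction step itself, is where the bulk of the technical effort concentrates, and it is precisely what is carried out in \cite{Aistis} under slightly different terminology.
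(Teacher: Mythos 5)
The paper does not include a proof of Theorem~\ref{thm:cpn}; the result is cited directly from \cite{Aistis}, so there is no in-paper argument to compare against. Assessing your proposal on its own merits, the two directions fare very differently.

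Your ``if'' direction is essentially right and complete. The edge-count $n \le k + \binom{k}{2}$ for $nK_2$ is correct (at most one edge inside each of $k$ homogeneous parts, at most one matching edge between each of $\binom{k}{2}$ pairs, because a matching with two edges is a $2K_2$), and since the class of chain graphs is closed under bipartite complementation ($\widetilde{2K_2}\cong 2K_2$), both complementation and bipartite complementation preserve chain partition number exactly, while the split modification changes it by at most a factor of two. Combined with monotonicity under induced subgraphs, this gives unbounded chain partition number in each of the six classes.

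The ``only if'' direction has a concrete gap. The Ramsey extraction you describe — peel a vertex $v_i$, pass to a homogeneous residual $U_i$ on which $v_i$ is universal or anti-universal, iterate, then pigeonhole on the type vector — is a generic procedure that nowhere uses the assumption that the chain partition number of $G_n$ is large. One can run exactly this peel on a large complete graph, a large complete bipartite graph, or a cograph, and the output is a uniform pattern that has \emph{bounded} chain partition number. So nothing in the construction forces the output to be a large $mK_2$, $\widetilde{mK_2}$, or one of the related four classes, and the promised ``dictionary from type vectors to the six classes'' cannot exist as stated: several type vectors correspond to graphs outside all six classes. What is actually needed is the contrapositive in the other form: assume $\mathcal X$ contains none of the six classes, so there is a $p$ with every graph in $\mathcal X$ simultaneously $pK_2$-free, $\widetilde{pK_2}$-free, $\overline{pK_2}$-free, $\overline{\widetilde{pK_2}}$-free, and free of the two split prototypes; then prove a structural decomposition showing that this sextuple of forbidden induced subgraphs bounds the chain partition number. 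That targeted decomposition — not a generic Ramsey peel — is the real content of the ``only if'' direction, and your sketch does not supply it.
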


Bounded chain partition number implies implicit representation by Theorem~\ref{thm:local} and the fact that chain graphs admit an implicit representation. 

\subsection{Double-star partition number}
\label{sec:dspn}

The results in \cite{Aistis} suggest one more parameter that generalizes the chain partition number.
To define this parameter, let us call a class $\mathcal X$ of bipartite graphs {\it double-star-free} if there is a constant 
$p$ such that no graph $G$ in $\mathcal X$ contains an unbalanced copy of $2K_{1,p}$, i.e.\ an induced copy of $2K_{1,p}$ 
in which the centres of both stars belong to the same part of the bipartition of $G$. In particular, 
every class of {\it double-star-free} graphs is $F_{t,p}$-free for some $t,p$. We will say that a class $\mathcal X$ of 
graphs is of bounded {\it double-star partition number} if there are constants $k$ and $p$ such that 
the vertices of every graph in $\mathcal X$ can be partitioned into at most $k$ homogeneous subsets
such that the edges between any pair of subsets form a bipartite graph that does not contain an unbalanced copy of $2K_{1,p}$.
We observe that if $p=1$, we obtain a class of bounded chain partition number.

Classes of bounded double-star partition number have been defined in the previous paragraph through two constants, $k$ and $p$. By taking the maximum of the two, 
we can talk about a single constant, which can be viewed as a graph parameter defining the family of classes of bounded double-star partition number.
In Section~\ref{sec:implicit} we will show that any class in this family admits an implicit representation. 

\medskip
Similarly to Theorem~\ref{thm:cpn}, classes of bounded double-star partition number admit a characterisation in terms of minimal hereditary classes 
where the parameter is unbounded. In this characterisation, the class $\mathcal M$ of graphs of vertex degree at most 1 is replaced by 
the class $\mathcal S$ of star forests in which the centers of all stars belong to the same part of the bipartition.

Seven more classes are obtained 
by various complementations either between the two parts of the bipartition or within these parts. Together with the class $\mathcal S$ itself this gives 
eight different classes of bipartite, co-bipartite and split graphs  (notice that complementing the part containing 
the centers of the stars and the part containing the leaves of the stars produce different classes of graphs). We will refer to all of them as the
``classes related to $\mathcal S$''.

\begin{theorem}{\rm \cite{Aistis}}\label{thm:dspn}
A hereditary class $\mathcal X$ is of unbounded double-star partition number if and only if $\mathcal X$ contains at least one of the following ten classes:
the class of $P_3$-free graphs (disjoint union of cliques), the class of $\overline{P}_3$-free graphs (complete multipartite graphs),   
the class $\mathcal S$ and the seven classes related to $\mathcal S$.
\end{theorem}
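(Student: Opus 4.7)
I would handle the two directions separately, and reduce to Theorem~\ref{thm:cpn} wherever possible.

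\textbf{Sufficiency.} For the ``if'' direction I would show that each of the ten candidate classes already has unbounded double-star partition number on its own. Since $2K_{1,p}$ is self-complementary as a bipartite graph, the parameter is preserved by both global and bipartite complementation, which reduces the task to two base cases; the remaining eight classes then follow by symmetry. For the class of $P_3$-free graphs the witness is $G_n = nK_n$: every homogeneous set is either a subset of one clique component or an independent transversal that uses at most one vertex per component, and a counting argument then forces any homogeneous partition to use $\Omega(n)$ parts, regardless of the chosen $p$. For the class $\mathcal{S}$ the witness is $G_n = nK_{1,n}$: by pigeonhole, any $k$-part homogeneous partition places $\Omega(n/k)$ star centres into a common part $P$; the forbidden induced $2K_{1,p}$ between $P$ and any other part $Q$ then caps $Q$'s absorption of the leaves of these centres at roughly $n + (p-1)(|P \cap \text{centres}| - 1)$, and comparing against the total leaf count forces $k$ and $p$ not to both remain bounded as $n$ grows.

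\textbf{Necessity.} For the ``only if'' direction I would bootstrap from Theorem~\ref{thm:cpn}. Every chain graph is $2K_2$-free and therefore $2K_{1,p}$-free for every $p \geq 1$, so any chain partition with $k$ parts is also a double-star partition with $k$ parts (for any $p$), and bounded chain partition number implies bounded double-star partition number. Contrapositively, unbounded double-star partition number in $\mathcal{X}$ gives unbounded chain partition number, and Theorem~\ref{thm:cpn} then places one of the six minimal classes ($\mathcal{M}$, $\widetilde{\mathcal{M}}$, their complements, and the two split variants) inside $\mathcal{X}$. Each of these six classes alone, however, has bounded double-star partition number: for instance $\mathcal{M}$ has double-star partition number $2$, obtained by placing each side of the matching in its own part, since a matching is $2K_{1,p}$-free for every $p \geq 2$. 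So containing a matching-type class is necessary but not sufficient; the assumption of unbounded double-star partition number must supply extra structure. My next step would be to take, for each large $k$, a witness graph $G_k \in \mathcal{X}$ of double-star partition number exceeding $k$, and inside it apply a Ramsey-type extraction: locate the matching-like or split-like structure guaranteed by Theorem~\ref{thm:cpn} and use the failure of $2K_{1,p}$-freedom between the parts that witness unboundedness to thicken each matching edge into a small star. Iterating over $k$ produces arbitrarily large induced copies of $nK_n$, $\overline{nK_n}$, or some $nK_{1,n}$-type graph in $\mathcal{X}$, and hereditary closure then delivers containment of the corresponding minimal class.

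\textbf{Main obstacle.} The Ramsey-type extraction in the necessity direction is the technical core of the proof. Two issues make it delicate: translating a global partition obstruction into a local induced substructure, and keeping track of which side of the bipartition the star centres land on -- this latter bookkeeping is exactly what distinguishes the eight relatives of $\mathcal{S}$ from one another. A careful case analysis driven by which of the six classes from Theorem~\ref{thm:cpn} is already contained in $\mathcal{X}$, combined with bipartite extremal counting of induced $K_{1,p}$'s between prospective sides, should close the argument, but the case analysis and the bookkeeping are where the bulk of the work lies.
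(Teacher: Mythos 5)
This theorem is stated in the paper with a citation to \cite{Aistis} and carries no proof there, so there is no in-paper argument to compare your attempt against; the judgement below is therefore on the proposal's own merits.

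Your sufficiency direction is essentially sound. The $nK_n$ and $nK_{1,n}$ witnesses do force $\Omega(n)$ parts (in the star-forest case, a fixed $p$ and $k$ yield a large independent part $Q$ containing $\gtrsim n^2/k$ leaves, hence $\gtrsim n/k$ star centres with $\geq p$ leaves in $Q$, and since those centres sit outside $Q$, two of them land in a common part by pigeonhole and produce the forbidden unbalanced $2K_{1,p}$). One caution: the claim that ``the parameter is preserved by bipartite complementation'' is more delicate than the global-complementation case. When a part $P$ straddles both sides of the bipartition, the bipartite graph between $P$ and $Q$ undergoes a \emph{partial} complementation under $G \mapsto \widetilde{G}$ (only the $A$--$B$ pairs flip), and the invariance of $2K_{1,p}$-freeness is not immediate. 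You would need to argue the seven related classes case by case, or restrict attention to partitions whose parts respect the bipartition.

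The necessity direction has a real gap, and it is the crux. Passing through Theorem~\ref{thm:cpn} buys you nothing you can use: unbounded double-star partition number does imply unbounded chain partition number, so $\mathcal{X}$ contains one of the six matching-type classes, but every one of those six classes has \emph{bounded} double-star partition number (your own observation that $\mathcal{M}$ has value $2$). So the conclusion of Theorem~\ref{thm:cpn} is strictly weaker than what you need, and there is no canonical way to ``thicken'' a matching found somewhere in a graph into a star forest: the matching structure certified by Theorem~\ref{thm:cpn} and the obstruction to a good double-star partition need not be co-located, and nothing in the definition links them. The correct route is almost certainly to run a Ramsey-type extraction \emph{directly} on the definition of double-star partition number — take a graph needing many parts, apply Ramsey to a large set of pairwise ``hard-to-merge'' vertices, and read off which of the ten structures emerges from the colour pattern — parallel to (rather than built on top of) the proof of Theorem~\ref{thm:cpn}. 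Your plan defers precisely this step (``where the bulk of the work lies''), which in this theorem is not a routine verification but the entire content of the necessity direction.
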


\subsection{$h$-index}
\label{sec:h-index}

We observed earlier that the family of classes of bounded chain partition number forms a subfamily of classes of bounded double-star partition number.
One other interesting restriction of the latter family consists of classes of bounded $h$-index. 
The $h$-index $h(G)$ of a graph $G$ is the largest $k \geq 0$ such that $G$ has $k$ vertices of degree at least $k$. This parameter is important in the study of 
dynamic algorithms \cite{hindex}. 

To see that bounded $h$-index does indeed imply bounded double-star partition number, we note that all classes in 
Theorem~\ref{thm:dspn}, except star forest, contain either all complete graphs or all complete bipartite graphs. 
The implication then follows since complete graphs, complete bipartite graphs and star forests have unbounded $h$-index (which gives us the contrapositive statement).

In fact, as we show in Theorem~\ref{thm-h-index} below, those three classes are the only minimal classes of unbounded $h$-index. 
We note that a ``prototype'' of this characterisation appeared in \cite{cographs}, where the set of minimal classes of unbounded $h$-index was identified {\it within the class of cographs}.
It was also conjectured in \cite{cographs} that this characterisation extends to the universe of all graphs, which is what we now show.

\begin{theorem}\label{thm-h-index}
	The classes of star forests, complete bipartite graphs and complete graphs are the only three minimal hereditary classes of graphs of unbounded $h$-index.
\end{theorem}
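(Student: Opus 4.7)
My plan is to establish minimality of the three named classes first and then to show that no other minimal class exists. That the three classes have unbounded $h$-index is immediate, since $K_n$, $K_{n,n}$, and $nK_{1,n}$ have $h$-index at least $n-1$, $n$, and $n$ respectively. For minimality, any proper hereditary subclass of the complete graphs (resp.\ complete bipartite graphs) excludes some $K_N$ (resp.\ $K_{M,N}$) and therefore contains only finitely many graphs. For the class of star forests, any proper hereditary subclass excludes some star forest $H$, and since $H$ is an induced subgraph of $qK_{1,m}$ when $q$ is its number of components and $m$ its maximum star size, such a subclass is contained in the class of $qK_{1,m}$-free star forests; any star forest of the latter kind has at most $q-1$ components whose centre has degree $\geq m$, which bounds its $h$-index by $\max(q,m)$.

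The heart of the proof is to show that every hereditary class $\mathcal X$ of unbounded $h$-index contains, for every $n$, at least one of $K_n$, $K_{n,n}$, or $nK_{1,n}$ as an induced subgraph. Fix $n$, let $N$ be a sufficiently large iterated Ramsey number in $n$, pick $G \in \mathcal X$ with $h(G) \geq N$, and let $S \subseteq V(G)$ be a set of $N$ vertices each of degree at least $N$. Ramsey's theorem applied to $G[S]$ either yields an induced $K_n$, in which case we are done, or a large independent set $I \subseteq S$. In the latter case, every vertex of $I$ has at least $N$ neighbours in $U := V(G) \setminus I$, and the remaining work is to analyse the bipartite graph $B$ between $I$ and $U$ in order to extract either an induced $K_{n,n}$ or an induced $nK_{1,n}$.

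I analyse $B$ via a dichotomy on the overlap pattern of the sets $N(v) \subseteq U$ for $v \in I$. If many $v \in I$ have neighbourhoods with a large common intersection, a dependent-random-choice or Kovari--Sos--Turan style argument yields a $K_{m,m}$ subgraph of $G$ whose $I$-side is already independent, and a further application of Ramsey to the $U$-side converts this into an induced $K_n$ (done) or an induced $K_{n,n}$. In the complementary regime, I use a Ramsey-type pruning step on $I$ (colouring pairs $\{v,v'\}$ by whether $|N(v) \cap N(v')|$ exceeds a threshold) followed by a greedy selection to produce $m$ vertices of $I$ with pairwise disjoint private neighbourhoods of size at least $m$, giving an $mK_{1,m}$ subgraph of $G$; Ramsey inside each private neighbourhood, together with one more Ramsey application across the union to force pairwise non-adjacency between leaves of different stars, then promotes this to an induced $nK_{1,n}$.

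The most delicate step will be the dichotomy itself, because pairwise control of $|N(v) \cap N(v')|$ does not automatically translate into control of the full common intersection of many neighbourhoods. I expect to overcome this either by splitting into the ``$|U|$ small'' regime (where Kovari--Sos--Turan directly yields a complete bipartite subgraph) and the ``$|U|$ large'' regime (where the neighbourhoods are sparse enough in $U$ to permit a greedy extraction of disjoint private neighbourhoods), or by invoking a sunflower-style lemma. A secondary difficulty is the accumulation of Ramsey factors throughout the argument, which forces $N$ to be an iterated Ramsey tower in $n$; verifying that each successive reduction leaves a substructure large enough to seed the next one is the main bookkeeping task.
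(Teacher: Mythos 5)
Your high-level plan matches the paper's: fix a set $S$ of high-degree vertices, apply Ramsey to $G[S]$ to get a $K_n$ or a large independent set $I$, and then analyse the bipartite graph between $I$ and $V(G)\setminus I$ to extract a $K_{n,n}$ or an $nK_{1,n}$. However, there are two genuine gaps in the details, the first of which is serious.

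The serious gap is the final ``promotion'' step. Suppose you have arrived at $m$ vertices $v_1,\dots,v_m\in I$ whose pairwise disjoint private neighbourhoods yield, after a Ramsey argument inside each, independent leaf sets $L_1,\dots,L_m$ of size $n$, so that each $\{v_i\}\cup L_i$ induces a $K_{1,n}$. To make the whole thing an induced $nK_{1,n}$ you must kill all cross-edges between different $L_i$'s. You propose ``one more Ramsey application across the union to force pairwise non-adjacency,'' but a $2$-colour Ramsey on pairs $\{i,j\}$ (coloured by whether any $L_i$--$L_j$ edge exists) can just as well return a clique in the ``has a cross-edge'' colour, and then you are stuck: a collection of leaf sets with pairwise ``some cross-edge'' does not give you a $K_n$, a $K_{n,n}$, or an independent union. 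The paper's ingredient (**) is precisely the missing idea: order each $L_i$, colour each pair $\{i,j\}$ by the \emph{entire} bipartite pattern between $L_i$ and $L_j$ as a subset of $[n]\times[n]$ (so $2^{n^2}$ colours), and apply multicolour Ramsey. In a monochromatic $2n$-clique, the common pattern is either empty (giving the independent union, hence $nK_{1,n}$), contains a diagonal pair $(t,t)$ (giving $K_n$), or contains an off-diagonal pair $(s,t)$ with $(s,s),(t,t)$ absent (giving an induced $K_{n,n}$). This trichotomy is what your sketch omits, and without something like it the star-forest branch does not close.

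The secondary gap is the dichotomy on neighbourhood overlaps, which you yourself flag as the most delicate step. Pairwise control of $|N(v)\cap N(v')|$ indeed does not control common intersections of many sets, and neither the ``$|U|$ small / $|U|$ large'' split nor the sunflower route is worked out. The paper avoids this entirely with its inductive lemma (*): pick $s\in S'$ of \emph{minimum} degree, apply bipartite Ramsey to $S'\times N(s)$ to get either a $K_{n,n}$ or a co-complete pair $(S'',T)$; the minimality of $\deg(s)$ then forces every vertex of $S''$ to have at least $|T|$ neighbours \emph{outside} $N(s)$, and one recurses on $S''$ to peel off one more star per step. This single recursion replaces your dense/sparse case split and the greedy private-neighbourhood extraction, and it is where the clean iterated-Ramsey bound comes from.

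One minor error: in your minimality argument you claim that a proper hereditary subclass of complete bipartite graphs is finite. It is not --- excluding $K_{M,N}$ leaves infinitely many $K_{a,b}$ with $\min(a,b)<M$ --- but since $h(K_{a,b})=\min(a,b)$, the $h$-index is still bounded, so the minimality conclusion survives.
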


\begin{proof}
	It is routine to check that $h$-index is unbounded in these three classes. Hence it remains to show that any class $X$ for which $h$-index is unbounded contains 
	one of the three classes. To do this, it suffices to show that for each $n$, there exists $d = d(n)$ such that any graph of $h$-index $d$ or greater contains, 
as an induced subgraph, either a clique $K_n$, or a complete bipartite graph $K_{n, n}$, or a star forest $nK_{1, n}$. We start with the following observation:
	\begin{itemize}
		\item[(*)] For any pair of positive integers $n,m$, there exists an integer $z_{n,m}$ such that any bipartite graph
		with $z_{n,m}$ vertices of degree at least $z_{n,m}$ in one side of its bipartition contains either a
		$K_{n,n}$, or an induced star forest $mK_{1,n}$.  
	\end{itemize}
	
	We prove the observation by induction on $m$. The statement is true for $m=1$, as we can simply take $z_{n,1}:=n$ for any $n$. 
Now let $m>1$. Write $R^b(i, j)$ for the bipartite Ramsey number, i.e. the smallest integer such that any bipartite graph with $R^b(i, j)$ 
vertices in each side either contains $K_{i, i}$, or the bipartite complement of $K_{j, j}$. 
Put $z_{n, m} := R^b(n, z_{n, m-1})$, and let $G = (A, B, E)$ be a bipartite graph containing a set $S \subseteq A$ of $z_{n,m}$ vertices of degree at least $z_{n,m}$.
	 
	Pick a vertex $s \in S$ of minimum degree, and consider the bipartite graph $G'$ induced by the sets $S$ and $N(s)$. 
	As both $|S| \geq z_{n,m}$ and $|N(s)| \geq z_{n,m}$, $G'$ contains by construction either a $K_{n,n}$, or 
	two sets $S' \subseteq S$, $T \subseteq N(s)$ of size $z_{n,m-1}$ with no edges between them. In the former case we are done, so
	consider the latter. Note that, by minimality of the degree of $s$, since each vertex in $S'$ has at least $|T|=z_{n,m-1}$ 
non-neighbours in $N(s)$, each vertex in $S'$ must have at least $z_{n,m-1}$ neighbours outside $N(s)$. 
Let $G''$ be the graph induced by the vertices in $S'$, together with their neighbourhoods outside of $N(s)$. 
Applying the induction hypothesis to $G''$, we find that $G''$ contains either a $K_{n,n}$, or an induced $(m-1)K_{1,n}$. 
In the former case we are once more done; in the latter, we note that adjoining vertex $s$ together with any $n$ vertices from $T$ to the $(m-1)K_{1, n}$
	 yields a $mK_{1,n}$, and the observation is proven.
	
	\bigskip
	
	The second ingredient is as follows:
	\begin{itemize}
		\item[(**)] For any positive integer $n$, there exists an integer $m=m(n)$ with the following property: 
if $G$ is a graph whose vertex set can be partitioned into $m$ independent sets of size $n$, then $G$ contains a $K_n$, 
an induced $K_{n,n}$, or an independent set of size $n^2$ which is the union of $n$ of the original independent sets.  
	\end{itemize}
	
	To show this, write $R_c(i)$ for the multicolour Ramsey number -- the smallest integer such that, 
for any edge colouring with $c$ colours of a complete graph on $R_c(i)$ vertices, there is a monochromatic clique on $i$ vertices. 
	Put $m:=R_{2^{n^2}}(2n)$.
	
	Now let $G$ be a graph with vertex set $V(G)=V_1 \cup V_2 \cup \ldots \cup V_m$ (the $V_i$ are disjoint), such that for all $i$, $V_i$ is independent and $|V_i|=n$.
	For each $i$, fix an ordering of the vertices in $V_i$, that is, a bijection $\varphi_i : V_i \to [n]$. 
For each $i, j$ with $1 \leq i < j \leq m$, put $E_{ij} := \{(\varphi_i(x), \varphi_j(y)) : x \in V_i, y \in V_j, \text{ and } \{x, y\} \in E(G)\}$. 
Intuitively, $E_{ij} \subseteq [n] \times [n]$ is simply the edge set between $V_i$ and $V_j$, where we orient the edges from the lower to the higher index, 
and identify the two sets with copies of $[n]$ via their respective orderings. 
	
	Consider an auxiliary complete graph with vertex set $[m]$; for each $i < j$, we colour the edge $\{i, j\}$ with the set $E_{ij}$. 
We note that there are $2^{n^2}$ possible colours, corresponding to the subsets of $[n] \times [n]$; we find a monochromatic clique on $2n$ vertices $i_1 < i_2 < \dots < i_{2n}$. 
Note that $E_{i_1 i_2}=E_{i_k i_l}$ for all $1 \leq k < l \leq 2n$. 
If $E_{i_1 i_2}$ is empty, then $V_{i_1} \cup V_{i_2} \cup \dots \cup V_{i_n}$ induces an independent set of size $n^2$. 
If there exists $1 \leq t \leq n$ such that $E_{i_1 i_2}$ contains $(t, t)$, then $\{\varphi^{-1}_{i_r}(t) : 1 \leq r \leq n\}$ is a $K_n$. 
Finally, if for some $s \neq t$,  $E_{i_1 i_2}$ contains $(s, t)$ and not $(s, s)$ nor $(t, t)$,
then $\{\varphi^{-1}_{i_r}(s) : 1 \leq r \leq n\} \cup \{\varphi^{-1}_{i_r}(t) : n + 1 \leq r \leq 2n\}$ induces a $K_{n, n}$ in $G$.

	\bigskip
	
	We can now put these two facts together to obtain our main result. 
	
	\begin{itemize}
		\item[(***)] For any integer $n$, there exists an integer $d=d(n)$ such that any graph of $h$-index at least $d$
		contains either a $K_n$, a $K_{n,n}$, or a $nK_{1,n}$. 
	\end{itemize}
	
	Write $R(p, q)$ for the usual Ramsey number -- the smallest number such that a graph on $R(p, q)$ vertices contains either a clique of size $p$, 
or an independent set of size $q$. Put $N:=R(n, n)$, and let $d:=R(n, z_{N, m(n)})$, where $z$ and $m$ are defined as in (*) and (**) respectively. 
	Suppose that a graph $G$ contains a set $S$ of $d$ vertices of degree at least $d$. Then either $G[S]$ contains a $K_n$, in which case we are done, 
	or it contains an independent set $S' \subseteq S$ of size $z=z_{N, m(n)}$. Note that each vertex of $S'$ has degree at least 
	$z_{N,m(n)}$ outside $S$. Write $G'$ for the bipartite graph with parts $S'$ and $T' := V(G) \setminus S'$, 
and with $E(G') := \{\{x, y\} \in E(G) : x \in S', y \in T'\}$ (that is, $G$ is obtained from $G'$ by removing all edges with both endpoints in $T'$). 
By definition of $z$, $G'$ contains either a $K_{N, N}$, or an induced star forest
	$m(n)K_{1,N}$. A $K_{N,N}$ in this bipartite graph translates into either an induced $K_{n,n}$ in $G$ or a $K_n$ in $G$; 
indeed, look at the $N$ vertices of the $K_{N, N}$ lying in $T'$: by construction, the graph induced by them in $G$ must contain an independent set of size $n$, or a clique of size $n$.
	
	It remains to consider the case where $G'$ contains an induced star forest $m(n)K_{1, N}$. 
Note that each $K_{1, N}$ is induced in $G'$, but not necessarily in $G$. 
However, by a similar argument to the case above, either $G$ contains a $K_n$ (and we are done), 
or we may find a $m(n)K_{1, n}$ induced in $G'$ and not necessarily in $G$, 
but such that each separate $K_{1, n}$ is also induced in $G$. 
In particular, the leaves of each separate $K_{1, n}$ form an independent set in $G$. 
Consider the graph $H \subseteq G$ induced by the leaves of those $m(n)$ $K_{1, n}$'s. 
By definition of $m(n)$, $H$ either contains a $K_n$ (and we are done), 
an induced $K_{n, n}$ (and we are done), or the $n^2$ leaves of $n$ of the $K_{1, n}$'s 
induce an independent set. In this final case, $G$ contains an induced $nK_{1, n}$, and this proves the theorem.
	
\end{proof}


\section{Implicit representations}
\label{sec:implicit}

In this section, we identify a number of new hereditary classes of graphs that admit an implicit representation.

\subsection{$F_{t,p}$-free bipartite graphs}\label{sec:Ftp}

In this section we show that $F_{t,p}$-free bipartite graphs admit an implicit representation for any $t$ and $p$.
Together with Theorem~\ref{thm:ftt} this verifies Conjecture~\ref{con:2} for these classes. 

Without loss of generality we assume that $t=p$ and split the analysis into several intermediate steps.
The first step deals with the case of double-star-free bipartite graphs.

\begin{lemma}
Let $G = (A, B, E)$ be a bipartite graph that does not contain an unbalanced induced copy of $2K_{1, t}$. Then $G$ has a vertex of degree at most $t - 1$ or bi-codegree at most $(t - 1)(t^2 - 4t + 5)$.
\end{lemma}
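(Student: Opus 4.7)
The plan is a proof by contradiction: I suppose every vertex of $G$ has degree at least $t$ and bi-codegree strictly greater than $M:=(t-1)(t^2-4t+5)=(t-1)((t-2)^2+1)$, and work toward deriving a forbidden unbalanced $2K_{1,t}$ or an arithmetic impossibility.

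First I apply the standard maximum-degree trick on both sides. Fix $u\in A$ of maximum degree and set $C:=B\setminus N(u)$. For any $a\in A$, the forbidden unbalanced $2K_{1,t}$ gives $\min(|N(a)\setminus N(u)|,|N(u)\setminus N(a)|)\leq t-1$, and since $\deg(u)\geq \deg(a)$ it is the first private that realizes the minimum, so every $a\in A$ has at most $t-1$ neighbours in $C$. Symmetrically, pick $v\in B$ of maximum degree; if $v\in N(u)$, the roles of $A$ and $B$ can be interchanged, so I may assume $v\in C$, and then every $b\in B$ has at most $t-1$ neighbours in $A\setminus N(v)$. A refined double-counting now exploits that $u\in A\setminus N(v)$ is adjacent to every $b\in N(u)$: each such $b$ contributes at most $t-2$ further neighbours to $(A\setminus N(v))\setminus\{u\}$, while every vertex of this set has at least one neighbour in $N(u)$ (it has degree $\geq t$ but at most $t-1$ neighbours in $C$). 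This gives $|A\setminus N(v)|\leq (t-2)\deg(u)+1$ and, by a symmetric argument, $|C|\leq (t-2)\deg(v)+1$. Combined with the contradiction hypothesis, and noting that $(M-1)/(t-2)=(t-1)(t-2)+1$, I obtain $\deg(u),\deg(v)\geq (t-1)(t-2)+2$.

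The crux of the proof is to convert this simultaneous largeness into an honest contradiction. The plan is to zoom in on $N(v)\subseteq A$: pick $a^*\in N(v)$ of maximum degree and apply the max-degree trick once more, obtaining $|N(a)\setminus N(a^*)|\leq t-1$ for every $a\in N(v)$. Since $|N(a^*)\cap C|\leq t-1$ (with $v$ among these), each $N(a)\cap C$ decomposes as a subset of $N(a^*)\cap C$ plus a ``private'' part of size at most $t-1$ in $C\setminus N(a^*)$. Combining this decomposition with the analogous structural information on the $N(u)$-side (each $b\in N(u)$ has at most $t-1$ non-neighbours in $N(v)$, obtained from the forbidden $2K_{1,t}$ on $B$-side), I expect to force either an explicit forbidden $2K_{1,t}$ configuration or the improved bound $\deg(v)\leq (t-1)(t-2)+1$, contradicting the estimate above.

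The main obstacle is precisely this final counting step. The bound $(t-1)(t-2)+1$ is tight: an extremal configuration for $t=3$ is a small $2$-design with $6$ vertices playing the role of $A$-points, $4$ vertices playing the role of $C$-blocks each of size $3$, and any two blocks meeting in a single point. This rigidity suggests the argument cannot afford to lose any multiplicative factor, so the book-keeping has to extract exactly the right constant from the $\leq t-1$ private-set size constraint applied simultaneously inside $N(v)$ and inside $N(u)$.
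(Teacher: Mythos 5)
The proposal does not establish the lemma: the argument stops one step short of a contradiction, and you say so yourself. The third paragraph only \emph{plans} to ``zoom in'' on $N(v)$, pick a second maximum-degree vertex $a^{*}$, and somehow extract a forbidden $2K_{1,t}$ or an improved degree bound, and the final paragraph explicitly labels this ``the main obstacle.'' What you have at the end is the pair of inequalities $\deg(u),\deg(v)\geq (t-1)(t-2)+2$ under the contradiction hypothesis, but no derivation that these are actually incompatible with the $2K_{1,t}$-freeness. In addition, the reduction to the case $v\in C$ is not justified: if the maximum-degree vertex $v$ of $B$ happens to lie in $N(u)$, then after swapping the roles of $A$ and $B$ the maximum-degree vertex of the new ``$B$'' (namely $u$) is still a neighbour of $v$, so $u\notin A\setminus N(v)$ and the same obstruction persists. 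Some extra argument is needed to either pick $v$ from $C$ (at the cost of losing maximality over all of $B$) or to handle the adjacent case separately.

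For contrast, the paper's proof is a short direct argument rather than a proof by contradiction. Fix a maximum-degree vertex $x\in A$, set $Z:=B\setminus N(x)$, and suppose $|N(x)|\geq t$ and $|Z|\geq (t-1)(t^{2}-4t+5)+1$ (otherwise we are done). Every $w\in A$ has fewer than $t$ neighbours in $Z$. Now take $t-1$ vertices $z_{1},\dots,z_{t-1}\in Z$ in non-increasing degree order, with neighbourhoods $W_{i}$. The $2K_{1,t}$-exclusion (together with the degree ordering) gives $|W_{i+1}\setminus W_{s}|\leq t-1$ for every $s\leq i$, hence $|W_{t-1}\setminus\bigcap_{i=1}^{t-2}W_{i}|\leq (t-1)(t-2)$. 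Vertices in $W_{t-1}\cap\bigcap W_{i}$ already see $z_{1},\dots,z_{t-1}$ and hence no other vertex of $Z$ (by the ``$<t$ neighbours in $Z$'' bound), while each of the at most $(t-1)(t-2)$ remaining vertices of $W_{t-1}$ sees at most $t-2$ further $Z$-vertices. This bounds the number of $Z$-vertices sharing a neighbour with $z_{t-1}$ by $(t-1)+(t-1)(t-2)^{2}=(t-1)(t^{2}-4t+5)$, so some $z\in Z$ has no common neighbour with $z_{t-1}$, and $2K_{1,t}$-freeness then forces one of $z$, $z_{t-1}$ to have degree at most $t-1$. Note how this avoids the symmetric bookkeeping altogether: it never needs a second maximum-degree vertex on the $B$-side, which is exactly the place where your proposal stalls.
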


\begin{proof}
Let $x \in A$ be a vertex of maximum degree. Write $Y$ for the set of neighbours of $x$, and $Z$ for its set of non-neighbours in $B$ (so $B = Y \cup Z$). 
We may assume $|Y| \geq t$ and $|Z| \geq (t - 1)(t^2 - 4t + 5) + 1$, since otherwise we are done. 
		
Note that any vertex $w \in A$ is adjacent to fewer than $t$ vertices in $Z$. Indeed, if $w \in A$ has $t$ neighbours in $Z$, 
then it must be adjacent to all but at most $t - 1$ vertices in $Y$ (since otherwise a $2K_{1, t}$ appears), so its degree is greater than that of $x$, a contradiction.
		
We now show that $Z$ has a vertex of degree at most $t - 1$. Pick members $z_1, \dots, z_{t - 1} \in Z$ in a non-increasing order of their degrees, 
and write $W_i$ for the neighbourhood of $z_i$. Since $G$ does not contain an unbalanced induced copy of $2K_{1, t}$ and $\deg(z_{i+1})\le \deg(z_i)$, for all $1 \leq i \leq t - 2$, $|W_{i + 1} - W_i| \leq t - 1$. 
It is not difficult to see that in fact, $|W_{i + 1} - \bigcap\limits_{s = 1}^i W_s| \leq (t - 1)i$, 
and in particular, $|W_{t - 1} - \bigcap\limits_{i = 1}^{t - 2} W_i| \leq (t - 1)(t - 2)$. 
		
With this, we can compute an upper bound on the number of vertices in $Z$ which have neighbours in $W_{t - 1}$: 
by the degree condition given above, each vertex in $W_{t - 1} \cap \bigcap\limits_{i = 1}^{t - 2} W_i$ 
is adjacent to no vertices in $Z$ other than $z_1, \dots, z_{t - 1}$. 
Each of the at most $(t - 1)(t - 2)$ vertices in $W_{t - 1} - \bigcap\limits_{i = 1}^{t - 2} W_i$ 
has at most $t - 2$ neighbours in $Z$ other than $z_{t - 1}$. This accounts for a total of 
at most $(t - 1) + (t - 1)(t - 2)^2 = (t - 1)(t^2 - 4t + 5)$ vertices which have neighbours in $W_{t - 1}$, 
including $z_{t - 1}$ itself. By assumption on the size of $Z$, there must be a vertex $z \in Z$ which has no common neighbours with $z_{t - 1}$. 
Since $2K_{1, t}$ is forbidden, one of $z$ and $z_{t - 1}$ has degree at most $t - 1$, as claimed.
\end{proof}

An immediate implication of this result, combined with Theorem~\ref{thm:partial-bip} (applied with $B_2=\emptyset$), is that double-star-free bipartite graphs admit an implicit representation.

\begin{corollary}\label{cor:2stars}
The class of bipartite graphs excluding an unbalanced induced copy of $2K_{1, t}$ admits an implicit representation for any fixed $t$.
\end{corollary}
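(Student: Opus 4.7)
The plan is to read the corollary as a direct application of Theorem~\ref{thm:partial-bip}, with the preceding lemma providing the sole structural input. Concretely, for any $G=(A_0,B_0,E)$ in the class, the lemma produces a vertex $v$ whose degree is at most $t-1$ or whose bi-codegree is at most $(t-1)(t^2-4t+5)$. Since in a bipartite graph the degree (respectively bi-codegree) of $v$ counts precisely the neighbours (respectively non-neighbours) of $v$ in the part opposite to the one containing it, the vertex $v$ already satisfies the local boundedness condition that Theorem~\ref{thm:partial-bip} asks for.

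I would therefore instantiate Theorem~\ref{thm:partial-bip} with $A:=\{v\}$, $B_1:=V(G)-\{v\}$, $B_2:=\emptyset$, take $\mathcal Y$ to be the (trivial) hereditary class of one-vertex graphs (which manifestly admits an implicit representation), and set $d:=(t-1)(t^2-4t+5)$, which dominates both $t-1$ and $(t-1)(t^2-4t+5)$ for every $t\ge 2$. Condition~(1) of the theorem holds because $G[A]$ is a single-vertex graph; condition~(2) holds because $B_2$ is empty; and condition~(3) is exactly the conclusion of the lemma, with the $B_2$ half being vacuous.

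Before invoking the theorem, I would also record the routine observation that the class of bipartite graphs excluding an unbalanced induced $2K_{1,t}$ is hereditary: induced bipartite subgraphs (inheriting the bipartition) cannot manufacture a new forbidden configuration, so the lemma applies again to $G[B_1]$, and the recursion inside the proof of Theorem~\ref{thm:partial-bip} is legitimate. I do not foresee any real obstacle: the combinatorial content has already been absorbed into the lemma, and the corollary is essentially a wrapper that feeds that content into the generic recursive labelling scheme. The only minor point worth flagging is the uniformisation of the two bounds $t-1$ and $(t-1)(t^2-4t+5)$ into a single constant $d$, which is required by the hypothesis of Theorem~\ref{thm:partial-bip}.
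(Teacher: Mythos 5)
Your proposal is correct and matches the paper's intended argument: the paper also applies Theorem~\ref{thm:partial-bip} with $B_2=\emptyset$, and the singleton choice $A=\{v\}$ with $d=(t-1)(t^2-4t+5)$ is exactly the natural way to instantiate it using the preceding lemma. The only (correctly identified) bookkeeping step, noting that $(t-1)(t^2-4t+5)\ge t-1$ since $t^2-4t+5=(t-2)^2+1\ge 1$, is needed to get a single constant $d$, and your remark that the class is hereditary so the recursion is licit is likewise implicit in the paper.
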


Together with Theorem~\ref{thm:local} this corollary implies one more interesting conclusion.	
	
\begin{corollary}
The classes of graphs of bounded double-star partition number, and in particular those of bounded $h$-index, admit an implicit representation.
\end{corollary}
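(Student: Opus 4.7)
The plan is to deduce this from Theorem~\ref{thm:local} using Corollary~\ref{cor:2stars} as the main ingredient and the trivial implicit representation of complete graphs as a supplement. Fix constants $k$ and $p$ witnessing the bounded double-star partition number of a class $\mathcal X$. Given $G \in \mathcal X$, let $V_1, \ldots, V_k$ be a partition of $V(G)$ into homogeneous sets such that, for each pair $i \neq j$, the bipartite graph on $V_i \cup V_j$ consisting of the $G$-edges between $V_i$ and $V_j$ avoids an unbalanced induced $2K_{1,p}$.

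For the covering I would take, for each pair $1 \leq i < j \leq k$, the bipartite graph $H_{ij}$ on $V_i \cup V_j$ whose edge set is $E(G) \cap (V_i \times V_j)$; and, for each index $i$ such that $V_i$ is a clique in $G$, the complete graph $C_i$ on $V_i$ (independent $V_i$ need no internal cover). Assuming $k \geq 2$ (the case $k=1$ is trivial, since $G$ is then a single clique or an edgeless graph), every vertex lies in at least one $H_{ij}$, and the union of edge sets of all $H_{ij}$ and $C_i$ is exactly $E(G)$. A vertex in $V_i$ belongs to the $k-1$ graphs $H_{ij}$ with $j \neq i$ and possibly to $C_i$, so it is covered at most $k$ times, a constant.

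Let $\mathcal Y$ be the union of the class of bipartite graphs excluding an unbalanced induced $2K_{1,p}$ and the class of complete graphs. The former admits an implicit representation by Corollary~\ref{cor:2stars}, the latter admits a trivial implicit representation (vertex labels are arbitrary distinct identifiers and the adjacency function is constant), and the two are merged into a single implicit representation for $\mathcal Y$ by prepending one flag bit indicating which of the two types the ambient graph is. Every $H_{ij}$ and every $C_i$ lies in $\mathcal Y$, so Theorem~\ref{thm:local} applies and yields an implicit representation for $\mathcal X$. The $h$-index statement follows immediately, since, as noted just before Theorem~\ref{thm-h-index}, bounded $h$-index implies bounded double-star partition number.

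The only real obstacle I anticipate is a bookkeeping one: assembling the two ingredient classes into a single class $\mathcal Y$ in the sense required by Theorem~\ref{thm:local}, and checking that the cover is genuine (edges across parts go into the appropriate $H_{ij}$, edges inside a part into $C_i$, and every vertex sits in some covering graph). No combinatorial input beyond Corollary~\ref{cor:2stars} and the trivial representation of cliques is needed.
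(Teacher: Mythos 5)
Your proposal is correct and matches the paper's approach exactly: the paper states this corollary with the one-line justification ``Together with Theorem~\ref{thm:local} this corollary implies one more interesting conclusion,'' which is precisely the covering argument via Corollary~\ref{cor:2stars} and cliques that you spell out in detail. The bookkeeping you anticipate (merging the two ingredient classes into one $\mathcal Y$ via a flag bit, and verifying the cover) is straightforward and your handling of it is sound.
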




\medskip
Our next step towards implicit representations of $F_{t,t}$-free bipartite graphs deals with the case of $F^1_{t,t}$-free bipartite graphs,
where $F^1_{t,t}$ is the graph obtained from $F_{t,t}$ by deleting the isolated vertex.

\begin{lemma}\label{lem:ftt-1}
The class of $F^1_{t,t}$-free bipartite graphs admits an implicit representation.
\end{lemma}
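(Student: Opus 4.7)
The plan is to derive Lemma~\ref{lem:ftt-1} as an immediate consequence of Corollary~\ref{cor:2stars}. The key observation to make first is that deleting the isolated vertex from $F_{t,t}$ (Figure~\ref{fig:speed}) leaves two vertex-disjoint copies of $K_{1,t}$ whose centres both sit in the two-vertex side of the bipartition; thus $F^1_{t,t}$ is precisely an unbalanced copy of $2K_{1,t}$.

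Consequently, every $F^1_{t,t}$-free bipartite graph excludes, in particular, an unbalanced induced copy of $2K_{1,t}$, which places it inside the class covered by Corollary~\ref{cor:2stars}. That corollary then supplies the implicit representation directly. If one instead prefers to interpret $F^1_{t,t}$-freeness as forbidding $2K_{1,t}$ as an abstract induced subgraph, so that both the balanced and the unbalanced bipartite embeddings are excluded, the class of $F^1_{t,t}$-free bipartite graphs becomes a subclass of the one in Corollary~\ref{cor:2stars}, and the same labelling still works since implicit representability is inherited by hereditary subclasses.

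I do not anticipate a genuine obstacle beyond this identification. All the substantive content has already been done in the technical lemma preceding Corollary~\ref{cor:2stars}: in every bipartite graph with no unbalanced $2K_{1,t}$, one finds a vertex of degree at most $t-1$ or bi-codegree at most $(t-1)(t^2-4t+5)$. Feeding this into Theorem~\ref{thm:partial-bip} (applied with $B_2=\emptyset$, exactly as in the derivation of Corollary~\ref{cor:2stars}) produces the implicit labelling inductively, so nothing further is required for Lemma~\ref{lem:ftt-1}. The lemma is therefore a reformulation of Corollary~\ref{cor:2stars} in the $F_{t,p}$ language, included at this point primarily to set up the subsequent treatment of the full graph $F_{t,t}$ (which also carries the isolated vertex).
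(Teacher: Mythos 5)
Your identification of $F^1_{t,t}$ is incorrect, and this breaks the whole argument. Deleting the isolated vertex from $F_{t,t}$ does \emph{not} leave a disjoint union $2K_{1,t}$: the two stars in $F_{t,p}$ share a common leaf. You can read this off the proof of Theorem~\ref{thm:ftt}, where $F_{t,t}$ is realised on $x,y,p,q$ together with $t$ vertices from $P$ and $t$ from $Q$; there both $p$ and $q$ are adjacent to $x$ (a shared leaf), while $y$ is the isolated vertex. So $F_{t,t}$ has $2t+4$ vertices, and $F^1_{t,t}$ has $2t+3$: two centres, one common leaf, and $2t$ private leaves. It is the disjoint union $2K_{1,t}$ \emph{plus} an additional vertex in the leaf side adjacent to both centres.

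This matters because the implication you want runs in the wrong direction. Since the unbalanced $2K_{1,t}$ is a proper induced subgraph of $F^1_{t,t}$, a graph with no unbalanced $2K_{1,t}$ is automatically $F^1_{t,t}$-free, but the converse fails: for example $2K_{1,t}$ itself is $F^1_{t,t}$-free. So the class of $F^1_{t,t}$-free bipartite graphs strictly contains the class handled by Corollary~\ref{cor:2stars}, and you cannot derive Lemma~\ref{lem:ftt-1} as a special case of that corollary. The paper's proof instead fixes a max-degree vertex $v$, considers BFS layers $V_i$, and applies Corollary~\ref{cor:2stars} only to the bipartite subgraph $G[V_1'\cup U]$ (with $u$ a max-degree vertex of $V_1$, $U=N(u)\cap V_2$, $V_1'=V_1-\{u\}$); there, the shared leaf in $F^1_{t,t}$ is exactly what lets one conclude that an unbalanced $2K_{1,t}$ in $G[V_1'\cup U]$ would, together with $v$ or with $u$, create a forbidden $F^1_{t,t}$. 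It then covers the remaining edges using Theorems~\ref{thm:local}, \ref{thm:partial} and~\ref{thm:partial-bip}. That decomposition is the genuine content of the lemma, and your proposal omits it.
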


\begin{proof}
It suffices to prove the result for connected graphs (this follows for instance from Theorem~\ref{thm:local}).
Let $G$ be a connected $F^1_{t,t}$-free bipartite graph and let $v$ be a vertex of maximum degree in $G$. We denote by
$V_i$ the set of vertices at distance $i$ from $v$.

First, we show that the subgraph $G[\{v\}\cup V_1\cup V_{2}]$ admits an implicit representation. To this end, we denote by $u$
a vertex of maximum degree in $V_1$, by $U$ the neighbourhood of $u$ in $V_2$, $W:=V_2-U$, and $V'_1:=V_1-\{u\}$. 

Let $x$ be a vertex in $V'_1$ and assume it has $t$ neighbours in $W$. Then
$x$ has at least $t$ non-neighbours in $U$ (due to maximality of $u$), in which case the $t$ neighbours of $x$ in $W$, the $t$ non-neighbours of $x$ in $U$ together with
$x$, $u$  and $v$ induce an $F^1_{t,t}$. This contradiction shows that every vertex of $V'_1$ has at most $t-1$ neighbours in $W$, 
and hence the graph $G[V'_1\cup W]$ admits an implicit representation by Theorem~\ref{thm:partial-bip} (applied with $B_2=\emptyset$).

To prove that $G[V'_1\cup U]$ admits an implicit representation, we observe that this graph does not contain an unbalanced  induced copy of $2K_{1,t}$. Indeed, if the centers of the two stars belong to $V'_1$, 
then they induce an $F^1_{t,t}$ together with vertex $v$, and if the centers of the two stars belong to $U$, then they induce an $F^1_{t,t}$ together with vertex $u$.
Therefore, the graph $G[\{v\}\cup V_1\cup V_{2}]$ can be covered by at most four graphs (two of them being the stars centered at $v$ and $u$), each of which admits an implicit representation,
and hence by Theorem~\ref{thm:local} this graph admits an implicit representation. 

To complete the proof, we observe that every vertex of $V_2$ has at most $t-1$ neighbours in $V_3$ (note that, by the definition of the sets $V_i$,
any neighbour of $V_2$ is either in $V_1$ or in $V_3$). Indeed, if a vertex $x\in V_2$ has $t$ neighbours in $V_3$, then
$x$ has at least $t$ non-neighbours in $V_1$ (due to maximality of $v$), in which case the $t$ neighbours of $x$ in $V_3$, the $t$ non-neighbours of $x$ in $V_1$ together with
$x$, $v$, and any neighbour of $x$ in $V_1$ (which must exist by definition) induce an $F^1_{t,t}$. 

Now we apply Theorem~\ref{thm:partial} with $A=\{v\}\cup V_1\cup V_2$ to conclude that $G$ admits an implicit representation, because every vertex of $A$ has at most $t-1$ neighbours outside of $A$.    
\end{proof}

The last step towards implicit representations of $F_{t,t}$-free bipartite graphs is similar to Lemma~\ref{lem:ftt-1} with some modifications.

\begin{theorem}\label{thm:ftt-0}
The class of $F_{t,t}$-free bipartite graphs admits an implicit representation.
\end{theorem}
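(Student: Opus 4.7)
My plan is to closely follow the proof of Lemma~\ref{lem:ftt-1}, adapting each bounded-degree claim through a simple dichotomy. Wherever the original proof derives ``at most $t-1$ neighbours in some set $S$'' from the $F^1_{t,t}$-free hypothesis, the weaker $F_{t,t}$-free hypothesis yields only ``at most $t-1$ neighbours in $S$, or complete to $S$''. The reason is that extending an induced $F^1_{t,t}$ to an induced $F_{t,t}$ requires an additional vertex on the appropriate bipartite side that is a non-neighbour of both star centres; if such a vertex is available we inherit the original contradiction, and otherwise the offending vertex must be complete to $S$. The ``complete'' branch contributes only complete bipartite pieces to the covering, which trivially admit implicit representations.

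Set up as in Lemma~\ref{lem:ftt-1}: assume $G$ is connected, let $v$ be of maximum degree, let $V_i$ denote the BFS layers from $v$, pick $u\in V_1$ of maximum degree in $V_1$, and set $U=N(u)\cap V_2$, $W=V_2\setminus U$, $V'_1=V_1\setminus\{u\}$. First I show that every $x\in V'_1$ satisfies $|N(x)\cap W|\le t-1$ or $W\subseteq N(x)$: otherwise $x$ has at least $t$ neighbours in $W$ and a non-neighbour $y\in W\setminus N(x)$, and by maximality of $u$ the vertex $x$ has at least $t$ non-neighbours in $U$, so picking $t$ such neighbours and non-neighbours together with $\{v,u,x,y\}$ induces an $F_{t,t}$ (two stars $K_{1,t+1}$ centred at $u$ and $x$ sharing the leaf $v$, with $y$ as isolated vertex because $y\nsim u$ and $y\nsim x$). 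Partitioning $V'_1=X_a\cup X_b$ accordingly, $G[\{v\}\cup V_1\cup V_2]$ is covered by the stars at $v$ and $u$, the complete bipartite $G[X_b\cup W]$, $G[X_a\cup W]$ (handled by Theorem~\ref{thm:partial-bip} with $B_2=\emptyset$), and $G[V'_1\cup U]$. Each vertex lies in a bounded number of these pieces, so Theorem~\ref{thm:local} applies once $G[V'_1\cup U]$ is shown to be implicitly representable.

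For $G[V'_1\cup U]$ the same reasoning applies: an unbalanced induced $2K_{1,t}$ with centres in $V'_1$ (resp.\ in $U$) gives, together with $v$ (resp.\ $u$), an induced $F^1_{t,t}$, and the $F_{t,t}$-free hypothesis forces the otherwise-available isolated vertex to be absent; this in turn imposes strong coverage constraints such as $V_{\ge 4}=\emptyset$ and $V_2\subseteq N(x_1)\cup N(x_2)$, which reduce the graph to a bounded-complexity core handled by direct covering arguments. In the generic case where no such $2K_{1,t}$ remains in $G[V'_1\cup U]$, Corollary~\ref{cor:2stars} provides an implicit representation directly. An analogous dichotomy one layer deeper shows that every $x\in V_2$ has at most $t-1$ neighbours in $V_3$ or is complete to $V_3$: here the missing isolated vertex for $F_{t,t}$ can be found in $V_3\setminus N(x)$, which lies on the correct bipartite side. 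Partitioning $V_2=Y_a\cup Y_b$ and including $G[Y_b\cup V_3]$ (complete bipartite) and $G[Y_a\cup V_3]$ (Theorem~\ref{thm:partial-bip}) in the covering of $G$, induction on the number of vertices applied to the smaller $F_{t,t}$-free bipartite graph $G[V_{\ge 3}]$ completes the construction.

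The main obstacle will be the analysis of $G[V'_1\cup U]$: unlike in Lemma~\ref{lem:ftt-1}, the $F_{t,t}$-free hypothesis no longer rules out unbalanced induced $2K_{1,t}$'s in this subgraph, and a careful case analysis chasing the possible locations of the missing isolated vertex (in $V_2$, in $V_{\ge 4}$, or in $V_3$) is required to extract enough structure for an implicit representation.
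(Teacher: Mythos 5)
Your plan mirrors the paper's overall architecture: BFS layers from a maximum-degree vertex $v$, a secondary max-degree vertex $u\in V_1$, the split $V_2=U\cup W$, and a ``bounded degree or complete'' dichotomy replacing the lemma's bounded-degree claim. The treatment of $G[V_1'\cup W]$ and of the $V_2\to V_3$ edges via this dichotomy is sound and matches the paper.

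However, the proposal has a genuine gap exactly where you flag ``the main obstacle,'' namely $G[V_1'\cup U]$. You observe correctly that the $F_{t,t}$-free hypothesis no longer kills an unbalanced $2K_{1,t}$ inside $G[V_1'\cup U]$ (because the required isolated vertex on the $V_2$-side need not exist), and you defer to ``a careful case analysis chasing the possible locations of the missing isolated vertex,'' listing candidate locations $V_2$, $V_{\ge 4}$, $V_3$ and claiming it ``reduces the graph to a bounded-complexity core.'' As written, this is a placeholder, not a proof: the constraints you name (such as $V_{\ge 4}=\emptyset$ or $V_2\subseteq N(x_1)\cup N(x_2)$) are not derived, and it is not shown that they force a structure amenable to Theorems~\ref{thm:local} or~\ref{thm:partial-bip}. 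The paper instead shows directly that an unbalanced $2K_{1,t}$ in $G[V_1'\cup U]$, together with $v$ (when the centres lie in $V_1'$) or with $u$ (when they lie in $U$), extends to a copy of $F_{t,t}$; this gives a concrete forbidden bipartite subgraph for $G[V_1'\cup U]$ and reduces the piece to a class already handled.

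A second, smaller issue is the endgame. You propose to add $G[Y_a\cup V_3]$, $G[Y_b\cup V_3]$ to the covering and ``induct on the number of vertices'' for $G[V_{\ge 3}]$. Theorem~\ref{thm:local} needs a fixed covering class $\mathcal Y$ that already admits an implicit representation, so putting $G[V_{\ge 3}]$ (a graph in the very class under consideration) into the cover is circular unless the recursion is repackaged carefully. The paper sidesteps this: it proves that if some vertex of $V_2$ has $t$ or more neighbours in $V_3$, then $V_5=\emptyset$ (a $V_5$-vertex would serve as the isolated vertex of an induced $F_{t,t}$), so a single application of Theorem~\ref{thm:partial-bip} with $A=\{v\}\cup V_1\cup V_2$ and $B_1=V_{\ge 3}$ closes the argument with no further recursion on the $F_{t,t}$-free class. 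To make your version correct you would need either to observe the same $V_{\ge 5}=\emptyset$ dichotomy, or to recast the recursion so that it is absorbed inside Theorem~\ref{thm:partial-general} rather than layered on top of Theorem~\ref{thm:local}.
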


\begin{proof}
By analogy with Lemma~\ref{lem:ftt-1} we consider a {\it connected} $F_{t,t}$-free bipartite graph $G$, denote by $v$  a vertex of maximum degree in $G$ and by
$V_i$ the set of vertices at distance $i$ from $v$. Also, we denote by $u$
a vertex of maximum degree in $V_1$, by $U$ the neighbourhood of $u$ in $V_2$, $W:=V_2-U$, and $V'_1:=V_1-\{u\}$. 

Let $x$ be a vertex in $V'_1$ and assume it has $t$ neighbours and one non-neighbour $y$ in $W$. Then
$x$ has at least $t$ non-neighbours in $U$ (due to maximality of $u$), in which case the $t$ neighbours of $x$ in $W$, the $t$ non-neighbours of $x$ in $U$ together with
$x$, $y$, $u$  and $v$ induce an $F_{t,t}$. This contradiction shows that every vertex of $V'_1$ has either at most $t-1$ neighbours or at most $0$ non-neighbours in $W$, 
and hence the graph $G[V'_1\cup W]$ admits an implicit representation by Theorem~\ref{thm:partial-bip} (applied with $B_2=\emptyset$).

To prove that $G[V'_1\cup U]$ admits an implicit representation, we show that this graph is $\widetilde{F}^1_{t,t}$-free
(we emphasize that in $\widetilde{F}^1_{t,t}$ the isolated vertex belongs to one part of the bipartition and the centers of the stars to the other part). 
Indeed, if the centers of the two stars of $\widetilde{F}^1_{t,t}$ belong to $V'_1$, 
then $\widetilde{F}^1_{t,t}$ together with vertex $v$ induce an $F_{t,t}$, and 
if the centers of the two stars of $\widetilde{F}^1_{t,t}$ belong to $U$, 
then $\widetilde{F}^1_{t,t}$ together with vertex $u$ induce an $F_{t,t}$. 
Therefore, the graph $G[\{v\}\cup V_1\cup V_{2}]$ can be covered by at most four graphs (two of them being the stars centered at $v$ and $u$), each of which admits an implicit representation,
and hence by Theorem~\ref{thm:local} this graph admits an implicit representation. 

To complete the proof, we observe that every vertex of $V_2$ has either at most $t-1$ neighbours or $0$ non-neighbours in $V_3$. Indeed, if a vertex $x\in V_2$ has $t$ neighbours 
and one non-neighbour $y$ in $V_3$, then
$x$ has at least $t$ non-neighbours in $V_1$ (due to maximality of $v$), in which case the $t$ neighbours of $x$ in $V_3$, the $t$ non-neighbours of $x$ in $V_1$ together with
$x$, $y$, $v$, and any neighbour of $x$ in $V_1$ induce an $F_{t,t}$. 

Finally, we observe that if  a vertex $x\in V_2$ has $t$ neighbours in $V_3$, then $V_5$ (and hence $V_i$ for any $i\ge 5$) is empty, because otherwise an induced $F_{t,t}$
arises similarly as in the previous paragraph, where vertex $y$ can be taken from $V_5$. 
Now we apply Theorem~\ref{thm:partial-bip} (with $B_2=\emptyset$) with $A=\{v\}\cup V_1\cup V_2$ to conclude that $G$ admits an implicit representation.
Indeed, if each vertex of $V_2$ has at most $t-1$ neighbours in $V_3$, then each vertex of $A$ has at most $t-1$ neighbours outside of $A$,
and if a vertex of $V_2$ has at least $t$ neighbours in $V_3$, then $V_i=\emptyset$ for $i\ge 5$ and hence every vertex of $A$ has at most $t-1$ neighbours or at most $0$ non-neighbours in the {\it opposite}
part outside of $A$.     
\end{proof}

\subsection{One-sided forbidden induced bipartite subgraphs}

In the context of bipartite graphs, some hereditary classes are defined by forbidding one-sided copies of bipartite graphs. 
Consider, for instance, the class of star forests, whose vertices are partitioned into an independent set of black vertices and an independent set of white vertices.
If the centers of all stars have the same colour, say black, then this class is defined by 
forbidding a $P_3$ with a white center. Very little is known about implicit representations for classes defined by one-sided forbidden induced bipartite subgraphs.
It is known, for instance, that bipartite graphs without a one-sided $P_5$ admit an implicit representation. This is not difficult to show and also follows 
from the fact $P_6$-free bipartite graphs have bounded clique-width and hence admit an implicit representation (note that $P_6$ is symmetric with respect to swapping the bipartition). 
Below we strengthen the result for one-sided forbidden $P_5$ to one-sided forbidden $F_{t,1}$.
We start with a one-sided forbidden $F^1_{t,1}$, where again $F^1_{t,1}$ is the graph obtained from $F_{t,1}$ by deleting the isolated vertex.

\begin{lemma}\label{lem:F1t1}
The class of bipartite graphs containing no one-sided copy of $F^1_{t,1}$ admits an implicit representation. 
\end{lemma}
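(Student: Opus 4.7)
The plan is to proceed by induction on $|V(G)|$, adapting the decomposition used in Lemmas~\ref{lem:ftt-1} and~\ref{thm:ftt-0} to the one-sided setting. Given $G = (A, B, E)$ in the class, I may assume by symmetry that no induced copy of $F^1_{t,1}$ has both centers in $A$. Setting aside the trivial case where $A$ has no vertex of positive degree, I pick $v \in A$ of maximum degree, set $U := N(v)$ and $W := B - U$, and partition $A - \{v\}$ into the sets $A_0 := \{a : N(a) \cap U = \emptyset\}$, $A_1 := \{a : \emptyset \neq N(a) \subseteq U\}$, and $A_2 := \{a : N(a) \cap U \neq \emptyset \text{ and } N(a) \cap W \neq \emptyset\}$.

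The structural heart of the argument is the claim that every $a \in A_2$ satisfies both $|N(a) \cap W| \leq t - 1$ and $|U - N(a)| \leq t - 1$. Since $a$ has at least one neighbour in $W$, maximality of $\deg(v)$ forces $|U - N(a)| \geq 1$ (otherwise $\deg(a) \geq |U| + 1 > \deg(v)$); and $a$ and $v$ share a neighbour in $U$. Consequently, if either $|N(a) \cap W| \geq t$ or $|U - N(a)| \geq t$ held, then $\{v, a\}$ together with a common neighbour and the appropriate private leaves would induce an $F^1_{t,1}$ with centers in $A$, contradicting our assumption.

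I then apply Theorem~\ref{thm:partial-bip} with $A' := \{v\} \cup A_1 \cup A_2 \cup U$, $B_1 := A_0 \cup W$, and $B_2 := \emptyset$. The opposite-part-neighbour bound is easy to verify: in the $A$-side of $A'$, the vertex $v$ and every vertex of $A_1$ have no neighbour in $W$, while each vertex of $A_2$ has fewer than $t$; in the $B$-side of $A'$, the set $U$ has no neighbour in $A_0$ by the very definition of $A_0$. To supply condition~(1) of that theorem, I cover $G[A']$ by $G[A_1 \cup U]$ and $G[\{v\} \cup A_2 \cup U]$ and invoke Theorem~\ref{thm:local}: each vertex of $A'$ belongs to at most two of the pieces. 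The piece $G[\{v\} \cup A_2 \cup U]$ admits an immediate implicit representation because every vertex of its $A$-side has at most $t-1$ non-neighbours in $U$. The piece $G[A_1 \cup U]$ still lies in our class (any induced $F^1_{t,1}$ in it with centers in $A_1$ would be such a copy in $G$ with centers in $A \supseteq A_1$) and is strictly smaller than $G$ since $v \notin A_1 \cup U$, so by the inductive hypothesis it too admits an implicit representation.

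The main subtlety I expect is the slightly recursive nature of the construction: the seed $G[A']$ does not belong to a pre-identified class with implicit representation, but is assembled on the fly from a smaller class member (handled inductively) together with a simple bounded-codegree piece. This is the same manoeuvre used at the end of the proof of Theorem~\ref{thm:ftt-0}, and it works because the presence of $v$ outside both $A_1 \cup U$ and $A_0 \cup W$ guarantees strict size decrease at each recursive step.
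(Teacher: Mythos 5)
Your structural observation is essentially the same as the paper's: for a vertex $a$ of $A_2$, maximality of $\deg(v)$ and the forbidden one-sided $F^1_{t,1}$ force both $|N(a)\cap W|\le t-1$ and $|U - N(a)|\le t-1$ (the paper writes $V_{10}$, $V_{01}$ where you write $U-N(a)$, $N(a)\cap W$, and it derives the second bound from the first via the degree comparison, but the content is identical). The problem lies in what you do with this observation.

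The paper takes the seed $A$ to be a subset of the \emph{centre side} only: $u$, the mixed-neighbourhood vertices of $U$, and the isolated vertices of $U$. Then $G[A]$ is edgeless, so condition~(1) of Theorem~\ref{thm:partial-bip} is satisfied with the fixed class $\mathcal Y$ of edgeless graphs, and the remaining vertices are split into two genuinely non-empty pieces $B_1 = V_1 \cup \{x\in U:\ N(x)\subseteq V_1\}$ and $B_2 = V_0 \cup \{x\in U:\ N(x)\subseteq V_0\}$ with no edges between them. You instead absorb (approximately) the paper's $A$ \emph{and} its $B_1$ into one large seed $A' = \{v\}\cup A_1\cup A_2\cup U$ and take $B_2=\emptyset$. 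This forces you to supply an implicit representation for $G[A']$, and your way of doing so --- covering $G[A']$ by a bounded-bi-codegree piece and by $G[A_1\cup U]$, with the second handled ``by the inductive hypothesis'' --- is a genuine gap. Theorem~\ref{thm:local}, like Theorem~\ref{thm:partial-bip}, requires the covering pieces to lie in a fixed class $\mathcal Y$ that \emph{already} admits an implicit representation; feeding it a piece from the very class under consideration and invoking induction on $|V(G)|$ is not a valid use of either tool. Concretely, if you track label lengths, the recursion gives $L(n) \le L(n-1) + O(\log n)$ for the vertices of $U$ (which belong to piece $G[A_1\cup U]$ at every level, and that piece only shrinks by one vertex per level), so the scheme yields labels of size $\Theta(n\log n)$, not $O(\log n)$.

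Your remark that this is ``the same manoeuvre used at the end of the proof of Theorem~\ref{thm:ftt-0}'' is also inaccurate: there, the seed $A=\{v\}\cup V_1\cup V_2$ is covered by pieces that all come from \emph{previously established} implicit-representation classes (two stars, a bounded-neighbour bipartite graph, and an $\widetilde{F}^1_{t,t}$-free graph handled by the already-proved Lemma~\ref{lem:ftt-1} via bipartite complementation), with no circular appeal. To repair your argument, the cleanest route is the paper's: keep the seed inside a single part of the bipartition so that $G[A]$ is edgeless, and use both $B_1$ and $B_2$ to carry $V_1$, $V_0$ and the $U$-vertices with one-sided neighbourhoods.
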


\begin{proof}
Let $G=(U,V,E)$ be a bipartite graph containing no copy of $F^1_{t,1}$ with the vertex of largest degree in $U$.
To prove the lemma, we apply Theorem~\ref{thm:partial-bip}. 

If $G$ is edgeless, then the conclusion trivially follows from Theorem~\ref{thm:partial-bip} with $A=V(G)$.  
Otherwise, let $u$ be a vertex of maximum degree in $U$. We split the vertices of $V$ into the set $V_1$ of neighbours and the set $V_0$ of non-neighbours of $u$.
Assume there exists a vertex $x\in U$ that has neighbours both in $V_1$ and in $V_0$. We denote by  $V_{10}$ the set of non-neighbours of $x$ in $V_1$
and by $V_{01}$ the set of neighbours of $x$ in $V_0$.
We note that $|V_{01}|\le |V_{10}|$, since $\deg(x)\le \deg(u)$. Additionally, $|V_{10}|<t$, since otherwise $t$ vertices in $V_{10}$, a vertex in $V_{01}$ and a common neighbour of $u$ and $x$ 
(these vertices exist by assumption) together with $u$ and $x$ induce a forbidden copy of $F^1_{t,1}$. Therefore, $x$ has at most $t-1$ non-neighbours in $V_1$ and at most $t-1$ neighbours in $V_0$. 
Now we define three subsets $A,B_1,B_2$ as follows:
\begin{itemize}
\item[] $A$ consists of vertex $u$, the vertices of $U$ that have neighbours both in $V_1$ and in $V_0$, and the vertices of $U$ that have neighbours neither in $V_1$ nor in $V_0$,
\item[] $B_1$ consists of $V_1$ and the vertices of $U$ that have neighbours only in $V_1$,
\item[] $B_2$ consists of $V_0$ and the vertices of $U$ that have neighbours only in $V_0$.
\end{itemize}
With this notation, the result follows from Theorem~\ref{thm:partial-bip}. 
\end{proof}

\begin{theorem}\label{thm:F1t1}
The class of bipartite graphs containing no one-sided copy of $F_{t,1}$ admits an implicit representation. 
\end{theorem}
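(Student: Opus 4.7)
The plan is to extend the argument of Lemma~\ref{lem:F1t1}, accounting for the extra isolated vertex in $F_{t,1}$, and to combine the pieces by strong induction on $|V(G)|$. Let $G=(U,V,E)$ be a bipartite graph with no one-sided $F_{t,1}$, where WLOG the forbidden copies have both centres in $U$. Pick $u\in U$ of maximum degree and set $V_1=N(u)$, $V_0=V\setminus V_1$. Applying the hypothesis to the pair $(u,x)$ for each $x\in U\setminus\{u\}$ yields that at least one of (i) $|V_1\setminus N(x)|<t$, (ii) $N(x)\cap V_0=\emptyset$, or (iii) $V_0\subseteq N(x)$ must hold.

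The proof of Lemma~\ref{lem:F1t1} handles cases (i) and (ii); case (iii) is the new obstacle. Let $U_3$ be the set of $x\in U\setminus\{u\}$ for which (iii) holds but neither (i) nor (ii) does; such $x$ are adjacent to all of $V_0$ and have at least $t$ non-neighbours in $V_1$. I would cover $G$ by three induced subgraphs: $G_1=G-U_3$, $G_2=G[U_3\cup V_1]$, and $G_3=G[U_3\cup V_0]$. Every edge of $G$ lies in at least one $G_i$, and every vertex lies in at most two of them, so by Theorem~\ref{thm:local} it suffices to show that each $G_i$ admits an implicit representation.

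The graph $G_3$ is complete bipartite (since every vertex of $U_3$ is adjacent to all of $V_0$) and so is trivial. For $G_2$, note that for any pair $x_1,x_2\in U_3$ both are adjacent to all of $V_0$; hence in any would-be forbidden copy of $F_{t,1}$ with centres $x_1,x_2$, the $t$ leaves, the single leaf and the isolated vertex must all lie in $V_1$. This shows that $G_2$ itself has no one-sided $F_{t,1}$ in its bipartition $(U_3,V_1)$, and since $u\notin V(G_2)$ the inductive hypothesis applies. In $G_1$, every vertex of $U\setminus\{u\}$ satisfies (i) or (ii), which is precisely the structural property used in the proof of Lemma~\ref{lem:F1t1}; one then applies Theorem~\ref{thm:partial-bip} to $G_1$ with $A=\{u\}\cup\{x\in U\setminus\{u\}:N(x)\cap V_0\ne\emptyset\text{ and (i) holds}\}\cup\{x\in U:N(x)=\emptyset\}$, $B_1=V_1\cup\{x\in U:\emptyset\ne N(x)\subseteq V_1\}$ and $B_2=V_0\cup\{x\in U\setminus U_3:\emptyset\ne N(x)\subseteq V_0\}$, taking $\mathcal Y$ to be the class of edgeless bipartite graphs. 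The internal recursion of Theorem~\ref{thm:partial-bip} on $G_1[B_1]$ and $G_1[B_2]$ terminates using the inductive hypothesis on the strictly smaller graphs obtained in this way.

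The main obstacle will be verifying that the pieces of the covering are genuinely strictly smaller instances of the same class (so that the induction is well-founded rather than circular), together with the boundary subcases: for example $V_0=\emptyset$, in which $U_3=\emptyset$ and the argument degenerates to the Lemma~\ref{lem:F1t1}-style partition applied directly to $G$, and the placement of vertices with $N(x)\subseteq V_0$, which depending on whether $|V_1|<t$ or $|V_1|\ge t$ end up inside $B_2$ or inside $U_3$, respectively.
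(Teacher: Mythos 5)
Your structural analysis is correct: the trichotomy (i) $|V_1\setminus N(x)|<t$, (ii) $N(x)\cap V_0=\emptyset$, (iii) $V_0\subseteq N(x)$ does follow from the absence of a one-sided $F_{t,1}$ with centres in $U$, the three-way covering by $G_1$, $G_2$, $G_3$ does capture every edge with each vertex appearing at most twice, $G_3$ is complete bipartite, and the Theorem~\ref{thm:partial-bip} partition you give for $G_1$ satisfies conditions (1)--(3). These are genuinely different ideas from the paper.

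However, the gluing step has a real gap, and it is not the one you flag. Theorem~\ref{thm:local} is a class-level statement: it requires the covering graphs to lie in a class $\mathcal Y$ that is \emph{already known} to admit an implicit representation, and it then concatenates the $O(\log n)$-bit $\mathcal Y$-labels. Your $G_2=G[U_3\cup V_1]$ lies in the very class $\mathcal X$ you are trying to prove is implicit, so invoking Theorem~\ref{thm:local} with $G_2$ as a piece is circular. Strong induction on $|V(G)|$ does make the recursion \emph{terminate}, but termination is not the issue: implicit representation requires $O(\log n)$-bit labels, and a naive ``label of $v$ = labels of $v$ in each $G_i$ containing it'' recursion on $G_2$ (which may shrink by as little as one vertex per level, since only $u$ is guaranteed to drop out) can have depth $\Omega(n)$, giving labels of length $\Omega(n\log n)$. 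The index-range machinery in the proof of Theorem~\ref{thm:partial-general} is precisely what keeps recursion of unbounded depth down to $O(\log n)$ bits, but that machinery relies on $A$, $B_1$, $B_2$ being a genuine \emph{partition}; your pieces overlap on $V_1$, $V_0$, $U_3$, and $U_3$ cannot be absorbed into $A$ because its vertices may have unboundedly many neighbours and unboundedly many non-neighbours in $V_1$. So ``well-founded, not circular'' is not a sufficient repair.

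The paper sidesteps this entirely with a different decomposition: it runs a BFS from a fixed vertex $v$ and covers $G$ by the bipartite graphs $G_i=G[V_i\cup V_{i+1}]$ between consecutive distance levels. The point of this choice is that $v$ itself (sitting at distance $\geq 2$ from every vertex of $G_i$ with $i\geq 1$ on the ``centre'' side) can always serve as the missing isolated vertex, so each $G_i$ avoids the one-sided $F^1_{t,1}$ -- a strictly stronger exclusion that is already handled by Lemma~\ref{lem:F1t1}. Theorem~\ref{thm:local} is then applied with $\mathcal Y$ equal to the $F^1_{t,1}$-one-sided-free class, which is a priori implicit, so there is no circularity and no depth issue. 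In short: both approaches begin by trying to ``supply the isolated vertex from outside,'' but the paper finds a vertex $v$ that does this simultaneously for all pieces of a non-recursive covering, whereas your approach reduces to smaller instances of the same class, which Theorem~\ref{thm:local} is not designed to handle.
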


\begin{proof}
Let $G=(U,V,E)$ be a connected bipartite graph containing no one-sided copy of $F_{t,1}$ with the vertex of largest degree in $U$. 
Let $v$ be a vertex in $V$ and let $V_i$ the set of vertices at distance $i$ from $v$. 
Then the graph $G_1:=G[V_1\cup V_2]$ does not contain a one-sided copy of $\widetilde{F}^1_{t,1}$ with the vertex of largest degree in $V_1$
(we emphasize that in $\widetilde{F}^1_{t,1}$ the isolated vertex belongs to one part of the bipartition
and the vertex of degree $t$  to the other part). Indeed, a one-sided copy of $\widetilde{F}^1_{t,1}$ with the vertex of largest degree in $V_1$
together with $v$ would induce a one-sided copy of $F_{t,1}$ with the vertex of largest degree in $U$. 
Therefore, by Lemma~\ref{lem:F1t1} the graph $G_1$ admits an implicit representation. 

For any $i>1$, the $G_i:=G[V_i\cup V_{i+1}]$ does not contain a one-sided copy of $F^1_{t,1}$ with the vertex of largest degree in $V_i$ (for odd $i$)
or with the vertex of largest degree in $V_{i+1}$ (for even $i$), since  otherwise together with $v$ this copy would induce a one-sided copy of $F_{t,1}$ with the vertex of largest degree in $U$. 
Therefore, by Lemma~\ref{lem:F1t1} the graph $G_i$ admits an implicit representation for all $i>1$.
Together with Theorem~\ref{thm:local} this implies an implicit representation for $G$.  
\end{proof}

For general one-sided forbidden $F_{t,p}$ the question remains open. Moreover, it remains open even for one-sided forbidden $2P_3$.
It is interesting to note that if we forbid $2P_3$ with black centers and if all black vertices have incomparable neighbourhoods, 
then the graph has bounded clique-width \cite{sperner} and hence admits an implicit representation. However, 
in general the clique-width of $2P_3$-free bipartite graphs is unbounded and the question of implicit representation  for one-sided forbidden $2P_3$ remains open.
 
\subsection{Subclasses of chordal bipartite graphs}

Any subclass of chordal bipartite graphs excluding a forest is factorial, as was shown in \cite{chordal-bipartite}.
However, implicit representations for such subclasses are in general unavailable. Below we provide an implicit representation for  the class of $S_{2,2,2}$-free chordal bipartite graphs, 
which recently attracted attention in a different context \cite{wl}. We emphasize that the class of $S_{2,2,2}$-free bipartite graphs (without the restriction to {\it chordal} bipartite graphs) is 
superfactorial and  hence does not admit an implicit representation.

\begin{theorem}
The  class of $S_{2,2,2}$-free chordal bipartite graphs admits an implicit representation.
\end{theorem}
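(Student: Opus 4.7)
The plan is to use a BFS-based decomposition of $G$ and apply Theorem~\ref{thm:partial-bip}. After reducing to connected graphs via Theorem~\ref{thm:local}, I fix a vertex $v$ of maximum degree in $G$ and let $V_0 = \{v\}, V_1, V_2, \ldots, V_d$ denote the BFS layers from $v$. The target is to set $A := V_0 \cup V_1 \cup V_2$ and to verify both that $G[A]$ admits an implicit representation and that each vertex of $A$ satisfies the bounded-exposure condition of Theorem~\ref{thm:partial-bip}.

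The key structural ingredient is that $S_{2,2,2}$-freeness, applied with $v$ as the centre, forces at most two vertices of $V_1$ to admit a \emph{fully private} neighbour in $V_2$ (a vertex $w \in V_2$ with $|N(w) \cap V_1| = 1$): three such pairs $(u_i, w_i)$ would, together with $v$, induce a copy of $S_{2,2,2}$. Denote the exceptional vertices by $u^*, u^{**}$ and their private neighbourhoods by $P^*, P^{**} \subseteq V_2$. Every remaining $V_2$-vertex has at least two $V_1$-neighbours, and every non-exceptional $V_1$-vertex sees only such shared $V_2$-vertices. I would then combine this with chordal bipartiteness on the shared induced subgraph $G[(V_1 \setminus \{u^*, u^{**}\}) \cup (V_2 \setminus (P^* \cup P^{**}))]$ --- crucially, $v \notin V_1 \cup V_2$, so $v$ cannot chord a $C_6$ lying entirely inside $V_1 \cup V_2$ --- to reduce the shared part to a chain graph (or a bounded union of chain graphs). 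Then $G[A]$ is covered by $O(1)$ pieces (the star at $v$, the two exclusive stars at $u^*, u^{**}$, and the shared chain graph(s)), each admitting an implicit representation, and Theorem~\ref{thm:local} yields an implicit representation for $G[A]$.

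For the exposure condition of Theorem~\ref{thm:partial-bip}, vertices of $V_0 \cup V_1$ have all their neighbours inside $A$ by the BFS structure. For each $w \in V_2$ the external neighbours lie in $V_3$; applying the analogous exclusive-vertex argument with $w$ in the role of the centre, together with the maximality of $\deg(v)$ (which prevents any $y \in V_3$ from having more neighbours than $v$), would bound $|N(w) \cap V_3|$ by a constant. Setting $B_1 := V(G) \setminus A$ and $B_2 := \emptyset$, the hypotheses of Theorem~\ref{thm:partial-bip} are met.

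The main obstacle is converting the joint chordal bipartite $+$ $S_{2,2,2}$-free conditions into a clean structural decomposition of the shared $V_1 \cup V_2$ subgraph: the $C_6$-exclusion argument requires a delicate case analysis, particularly when an incomparable configuration additionally involves the exceptional vertices $u^*, u^{**}$ or their private neighbourhoods, and the precise intermediate class admitting implicit representation must be pinned down. A secondary obstacle is securing the $V_2$--$V_3$ bound; if the single-centre exclusive-vertex argument at $w$ is insufficient, the choice of $A$ may need to be iterated across BFS layers, absorbing exceptional vertices from each layer in turn before Theorem~\ref{thm:partial-bip} can be applied.
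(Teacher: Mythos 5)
Your overall direction (BFS layers from a vertex $v$ of maximum degree) matches the paper's proof, but the specific execution diverges in a way that introduces genuine gaps.

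First, the exposure bound you plan to use for Theorem~\ref{thm:partial-bip} is false as stated: it is not true that $|N(w)\cap V_3|$ is bounded for $w\in V_2$. For instance, take a star $K_{1,n+3}$ at $v$ with leaves $u_1,\dots,u_{n+3}$, add a vertex $w$ adjacent to $u_1,u_2$, and attach $n$ pendant vertices $y_1,\dots,y_n$ to $w$. This graph is $S_{2,2,2}$-free and chordal bipartite, $v$ has maximum degree, and $w\in V_2$ has $n$ neighbours in $V_3$. In this particular example $w$ happens to have zero non-neighbours in $V_3$, so the weaker ``degree or co-degree'' condition of Theorem~\ref{thm:partial-bip} survives, but you have not established that weaker condition, and proving it (especially once higher odd layers $V_5,V_7,\dots$ are present) requires a careful argument about $V_2$-$V_3$-$V_4$ interactions that your proposal does not supply. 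The paper avoids this difficulty entirely by not using Theorem~\ref{thm:partial-bip} at all: it covers $G$ by the consecutive bipartite layer graphs $G_i:=G[V_i\cup V_{i+1}]$ (each vertex lies in at most two such graphs) and applies Theorem~\ref{thm:local}.

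Second, the structural handle you extract from $S_{2,2,2}$-freeness --- at most two vertices of $V_1$ have a fully private $V_2$-neighbour --- is correct but weaker than what is needed, and it does not by itself imply that the ``shared'' subgraph decomposes into a bounded number of chain graphs. The right observation is that $G[V_1\cup V_2]$ is $3K_2$-free: any induced $3K_2$ between $V_1$ and $V_2$, together with $v$, yields an induced $S_{2,2,2}$. Combined with $C_6$-freeness (chordality) and the fact that $3K_2=\widetilde{C_6}$, Theorem~\ref{thm:cpn} gives bounded chain partition number directly. Finally, the technical heart of the paper's proof is showing that $G_i$ is $3K_2$-free for \emph{all} $i\ge 2$; this requires an induction on $i$ using chordality, the minimality of $i$, and the maximality of $\deg(v)$, involving a case analysis over common neighbours in $V_{i-1}$ and a $C_6$ contradiction. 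Your proposal flags a ``delicate case analysis'' as an obstacle without supplying it, and this step cannot be sidestepped.
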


\begin{proof}
Similarly to Lemma~\ref{lem:ftt-1} we consider a connected graph $G$ in the class and a vertex of maximum degree $v$ in $G$. 
We also denote by $V_i$ the vertices of distance $i$ from $v$ and show that for any $i$, the bipartite graph $G_i:=G[V_i\cup V_{i+1}]$
belongs to a class that admits an implicit representation. First, we observe that $G$ (and hence each $G_i$) is $C_6$-free (since it is chordal),
and that $C_6\in \widetilde{\mathcal{M}}$, since the bipartite complement of $C_6$ is $3K_2$. It remains to show that each $G_i$ is $pK_2$-free for some constant $p$, 
implying that $G_i$ has bounded chain partition number (Theorem~\ref{thm:cpn}) and hence admits an implicit representation (Theorem~\ref{thm:local}).

The fact that $G_1$ is $3K_2$-free is obvious, since otherwise an induced $S_{2,2,2}$ can be easily found. 
Now we show that $G_i$ is $3K_2$-free for all $i$. Assume the contrary: there is a minimum $i>1$ such that 
$G_i$ contains a $3K_2$ induced by vertices $a_1,a_2,a_3\in V_i$ and $b_1,b_2,b_3\in V_{i+1}$ with $a_jb_j\in E(G)$ for all $j$.
Then at least two of $a_1,a_2,a_3$ have a common neighbour in $V_{i-1}$, since otherwise an induced $3K_2$ arises in $G_{i-1}$, contradicting the minimality of $i$. 
Without loss of generality assume $a_1$ and $a_2$ are adjacent to a vertex $c\in V_{i-1}$. To avoid an induced $S_{2,2,2}$, vertex $c$ is not adjacent to $a_3$
and $i\le 2$ (since otherwise we may consider a neighbour $d\in V_{i-2}$ of $c$ and $e\in V_{i-3}$ of $d$). We must thus have $i=2$. 

Now consider a neighbour $d\in V_1$ of $a_3$.
If $d$ has 2 or 0 neighbours in $\{a_1,a_2\}$, then an induced $S_{2,2,2}$ can be easily found (with centres $d$ or $c$ respectively). So, assume $d$ is adjacent to $a_2$ and non-adjacent to $a_1$. 
Then $v$ has a neighbour $e$ non-adjacent to $a_2$, since otherwise the degree of $a_2$ is greater than the degree of $v$. If $e$ is not adjacent to $a_1$,
then an induced $S_{2,2,2}$ with centre $c$ arises. Similarly, if it is not adjacent to $a_3$, then an induced $S_{2, 2, 2}$ with centre $d$ arises. 
However, and if $e$ is adjacent to both $a_1$ and $a_3$, then vertices $a_1,a_2,a_3,c,d,e$ induce a $C_6$, which is forbidden. 
A contradiction in all cases shows that $G_i$ is $3K_2$-free for all $i$ and completes the proof.  
\end{proof}

We observe that the class of $S_{2,2,2}$-free chordal bipartite graphs extends the class of bipartite permutation graphs, which has bounded symmetric difference by Theorem~\ref{thm:symdif}. 
One may ask whether this result can be extended to $S_{2,2,2}$-free chordal bipartite graphs; 
the answer is negative: it is possible to construct $S_{2,2,2}$-free chordal bipartite graphs with arbitrarily large symmetric difference. 
For brevity, we omit the full construction. We mention, however, that our examples have the form $G = (A, B \cup C, E)$, where $G[A \cup B]$ and $G[A \cup C]$ are chain graphs. 
We call such graphs {\em linked chain graphs} (and one can show that linked chain graphs are $S_{2, 2, 2}$-free chordal bipartite).


%

\medskip 
Several factorial subclasses of chordal bipartite graphs defined by  forbidding a unicyclic graph (i.e.\ a graph containing a single cycle) have been identified in \cite{chordal-factorial}.
In particular, a factorial upper bound was shown for $Q$-free chordal bipartite and $A$-free chordal bipartite graphs (see Figure~\ref{fig:Q} for the graphs $Q$ and $A$). 
\begin{figure}[ht]
\begin{center} 
\begin{picture}(120,100)
\put(60,20){\circle*{3}}
\put(60,60){\circle*{3}}
\put(20,100){\circle*{3}}
\put(20,60){\circle*{3}}
\put(105,60){\circle*{3}}
\put(60,100){\circle*{3}}
\put(60,20){\line(0,1){40}}
\put(20,60){\line(1,0){40}}
\put(60,60){\line(0,1){40}}
\put(20,60){\line(0,1){40}}
\put(60,60){\line(1,0){45}}
\put(20,100){\line(1,0){40}}
\end{picture}
\begin{picture}(120,100)
\put(40,20){\circle*{3}}
\put(40,60){\circle*{3}}
\put(40,100){\circle*{3}}
\put(85,20){\circle*{3}}
\put(85,60){\circle*{3}}
\put(85,100){\circle*{3}}
\put(40,20){\line(0,1){40}}
\put(85,20){\line(0,1){40}}
\put(40,60){\line(0,1){40}}
\put(85,60){\line(0,1){40}}
\put(40,60){\line(1,0){45}}
\put(40,100){\line(1,0){45}}
\end{picture}\end{center}
\caption{The graphs $Q$ (left) and $A$ (right)}
\label{fig:Q}
\end{figure}
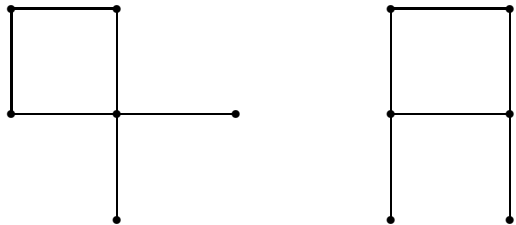

Below we strengthen these results in two ways. First, we extend both of them to the class of $D_k$-free chordal bipartite graphs, where a $D_k$ is the graph obtained from a cycle $C_4=(v_1,v_2,v_3,v_4)$
by adding one pendant edge to $v_1$, one pendant edge to $v_2$ and $k$ pendant edges to $v_4$. Second, we show that $D_k$-free chordal bipartite graphs admit an implicit 
representation, which is a stronger statement than a factorial upper bound on the size of the class. We observe that the class of $D_k$-free bipartite graphs (without the restriction to {\it chordal} bipartite graphs) is 
superfactorial and  hence does not admit an implicit representation. In our proof, we make use of the fact that any chordal bipartite graph has a vertex
which is {\it not} the centre of a $P_5$ \cite{Farber}.

\begin{theorem}\label{thm:dk-free}
The  class of $D_k$-free chordal bipartite graphs admits an implicit representation.
\end{theorem}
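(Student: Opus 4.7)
The plan is to adapt the template of the preceding theorem on $S_{2,2,2}$-free chordal bipartite graphs. I would take a connected $D_k$-free chordal bipartite graph $G$ and choose a root vertex $v$ by Farber's theorem: $G$ has a vertex $v$ that is not the centre of any induced $P_5$, and the structural consequence (as used in Section~\ref{sec:sd}) is that for any two neighbours $u, u' \in N(v)$ the sets $N(u)\setminus\{v\}$ and $N(u')\setminus\{v\}$ are comparable under inclusion. Performing BFS from $v$ and letting $V_i$ denote the layer at distance $i$ from $v$, this immediately says that the bipartite slice $G[V_1\cup V_2]$ is a chain graph (from the $V_1$-side); in particular, if $u_1,\ldots,u_m$ is the resulting chain order on $N(v)$, then $u_m$ is adjacent to every vertex of $V_2$.

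With this, the slices $G_0:=G[\{v\}\cup V_1]$ (a star) and $G_1:=G[V_1\cup V_2]$ (a chain graph) already admit implicit representations. Writing $G_i:=G[V_i\cup V_{i+1}]$ and noting that each vertex of $G$ lies in at most two consecutive slices, Theorem~\ref{thm:local} reduces the proof to showing that every $G_i$ admits an implicit representation. For $i\geq 2$, the plan is to argue that $G_i$ lies in a class already shown to admit an implicit representation in this section --- for instance, an $F_{t,p}$-free bipartite class as handled by Theorem~\ref{thm:ftt-0}, or a class of bipartite graphs defined by one-sided forbidden subgraphs as in Theorem~\ref{thm:F1t1}. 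The contradiction step would run: a configuration violating the target condition inside $G_i$, combined with a common ancestor in $V_{i-1}$ located by a pigeonhole argument and with the chain structure at $V_1$--$V_2$ supplying additional pendants, extends to an induced $D_k$ in $G$, contradicting $D_k$-freeness. Chordal bipartiteness ($G$ has no induced $C_6$) would be invoked repeatedly to rule out unwanted chords that would otherwise spoil the embedding.

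The main obstacle is the analysis of $G_i$ for $i\geq 2$ and the selection of the appropriate target class. Unlike the $S_{2,2,2}$-free setting, where $G_i$ is immediately $3K_2$-free by a short argument, $D_k$ contains an induced $C_4$, so obtaining the contradiction now requires explicitly producing a $C_4$ out of the BFS ancestors. Moreover, the tree example in which the root has many children each carrying its own pendant leaves shows that $G_i$ can contain arbitrarily large induced matchings, and in fact arbitrarily large one-sided copies of $2K_{1,t}$, so neither a naive matching bound nor a direct application of Corollary~\ref{cor:2stars} is available: the target class has to accommodate such configurations while still admitting an implicit representation. The combinatorial extraction of the $C_4$, together with the two single pendants at $v_1,v_2$ and the $k$ pendants at $v_4$ of a hypothetical induced $D_k$, from the candidate sets supplied by Farber's chain structure and the BFS layering, is where the technical core of the argument sits.
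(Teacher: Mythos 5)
Your framework matches the paper's: root the BFS at a vertex $v$ that is not the centre of an induced $P_5$ (Farber), observe that $G_1 = G[V_1 \cup V_2]$ is a chain graph, and then invoke Theorem~\ref{thm:local} after showing that every higher slice $G_i = G[V_i \cup V_{i+1}]$ lies in a class admitting an implicit representation. You also correctly rule out the naive routes (Corollary~\ref{cor:2stars} or a matching bound), noting that $G_i$ can contain arbitrarily large one-sided $2K_{1,t}$'s. So the scaffolding and the negative observations are sound.

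What the proposal leaves open, however, is precisely the step that carries the proof. The paper does not leave the target class as a menu of options: for each $i > 1$ it shows that $G_i$ contains no \emph{one-sided} copy of $F^1_{1,k}$ (that is, $K_{1,1}+K_{1,k}$ with both centres forced onto the $V_i$-side, the degree-$k$ vertex being the ``large'' one), and then invokes Lemma~\ref{lem:F1t1}, not Theorem~\ref{thm:ftt-0}. The extraction is also not a pigeonhole argument and does not use the chain structure of $G_1$ at all --- that part of your sketch is a misdirection. Instead, it is a clean dichotomy on ancestors: writing $a,b \in V_i$ for the two centres of the hypothetical one-sided $F^1_{1,k}$, both $a$ and $b$ have neighbours in $V_{i-1}$ by the BFS layering. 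If they share such a neighbour $c$, then $c$ together with a neighbour of $c$ in $V_{i-2}$ and the $F^1_{1,k}$ configuration yields an induced $D_k$; if they share none, the BFS paths from $a$ and $b$ up to $v$ close into an induced cycle of length at least $6$, contradicting chordal bipartiteness. Until you commit to the forbidden configuration $F^1_{1,k}$ (so that Lemma~\ref{lem:F1t1} becomes available) and carry out this ancestor dichotomy, the argument has a genuine gap at its core; the rest is in place.
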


\begin{proof}
We consider a connected graph $G$ in the class and a vertex  $v$ in $G$ which is not the centre of a $P_5$. 
We denote by $V_i$ the vertices of distance $i$ from $v$ and show that for any $i$, the bipartite graph $G_i:=G[V_i\cup V_{i+1}]$
belongs to a class that admits an implicit representation.

Since $v$ is not the centre of a $P_5$, the graph $G_1$ is a chain graph, and hence admits an implicit representation.
For $i>1$, we show that the graph $G_i$ does not contain a one-sided copy of $F^1_{1,k}$ with the vertex of large degree in $V_i$. 
Indeed, assume that $G_i$ contains a one-sided copy of $F^1_{1,k}$ with the vertex of large degree in $V_i$,
and denote the two vertices of this copy in $V_i$ by $a$ and $b$.  
By definition, $a$ and $b$ have neighbours in $V_{i-1}$.
If they have a common neighbour $c$ in $V_{i-1}$, 
then the copy of $F^1_{1,k}$ together with $c$ and any neighbour of $c$ in $V_{i-2}$ induce a $D_k$ (where $V_0=\{v\}$). 
If $a$ and $b$ have no common neighbours in $V_{i-1}$,
then an induced cycle of length at least $6$ can be easily found, which is forbidden for chordal bipartite graphs. A contradiction in both cases shows that  
$G_i$ does not contain a one-sided copy of $F^1_{1,k}$ and hence admits an implicit representation by Lemma~\ref{lem:F1t1}.
Therefore, by Theorem~\ref{thm:local}, $G$ admits an implicit representation as well.
\end{proof}

Noting that $Q$ and $A$ are chain graphs, we next provide a different generalisation of these results 
by showing that the class of chordal bipartite graphs avoiding a chain graph admits an implicit representation. 
Chain graphs have a well-known universal construction \cite{universal}. More specifically, any chain graph on 
$n$ vertices is induced in the universal chain graph $Z_n$ on $2n$ vertices with parts $a_1, \dots, a_n$ and 
$b_1, \dots, b_n$, and with $a_i$ adjacent to $b_j$ whenever $j \geq i$ (see Figure~\ref{fig:chain} for an illustration). 
It thus suffices to show that, for fixed $k$, the class of $Z_k$-free chordal bipartite graphs admits an implicit representation.

\begin{figure}[ht]
	\begin{center}
		\begin{tikzpicture}[scale=1, transform shape]
			
			\foreach \i in {1,...,5} {
				\filldraw (\i * 2, 0) circle (2pt) node[below right]{$b_{\i}$};
				\filldraw (\i * 2, 2) circle (2pt) node[above left]{$a_{\i}$};
				\foreach \x in {\i,...,5} {
					\draw (\i * 2, 2) -- (\x * 2, 0);
				}
			}

		\end{tikzpicture}
	\end{center}
	\caption{The graph $Z_5$}
	\label{fig:chain}
\end{figure}
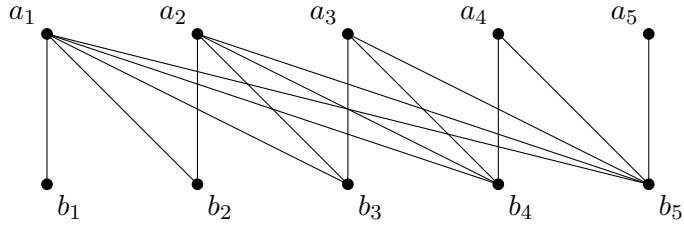

\begin{theorem}
	The class of chordal bipartite graphs avoiding a fixed chain graph admits an implicit representation.
\end{theorem}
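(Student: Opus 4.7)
The plan is to reduce the theorem to showing, for each fixed $k \geq 1$, that the class $\mathcal{X}_k$ of $Z_k$-free chordal bipartite graphs admits an implicit representation, since every chain graph on at most $n$ vertices is an induced subgraph of $Z_n$. I would prove this by induction on $k$; the base case $k = 2$ is immediate because $P_4$-free bipartite graphs are disjoint unions of complete bipartite graphs, which admit a trivial implicit representation.

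For the inductive step, the template of Theorem~\ref{thm:dk-free} applies. Take a connected $G \in \mathcal{X}_k$, pick a vertex $v$ that is not the centre of any induced $P_5$ (using Farber's theorem for chordal bipartite graphs), and let $V_i$ be the BFS layer at distance $i$ from $v$. Set $G_i := G[V_i \cup V_{i+1}]$. By the local covering Theorem~\ref{thm:local}, since every vertex of $G$ lies in at most two of these layer graphs, it suffices to construct an implicit representation for each $G_i$. The graph $G_0$ is a star and $G_1$ is a chain graph (by the choice of $v$), so the main work is for $i \geq 2$.

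The main step is an extension argument. I claim $G_i$ contains no induced $Z_{k-1}$ whose apex vertices $a_1, \dots, a_{k-1}$ lie in $V_i$ and share a common neighbour $c \in V_{i-1}$: otherwise, taking $c$ as a new $b_k$ and any BFS parent $d$ of $c$ (in $V_{i-2}$, or $v$ itself when $i = 2$) as a new $a_k$ produces an induced $Z_k$ in $G$, since every edge $a_j c$ holds by common-neighbour-ship, $dc$ is a BFS edge, and $d$ is non-adjacent to every $b_j \in V_{i+1}$ because $V_{i-2}$ and $V_{i+1}$ lie two BFS layers apart. A symmetric argument handles the orientation where the apex lies in $V_{i+1}$. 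In the complementary ``no common parent'' case, chordal bipartite-ness is used: two apex vertices with distinct parents in $V_{i-1}$, a common leaf in $V_{i+1}$ (guaranteed when $k \geq 3$ by the $Z_{k-1}$-structure), and the lowest common BFS ancestor of the two parents combine into an induced cycle of length at least $6$, contradicting chordal bipartite-ness. Together these arguments yield $G_i \in \mathcal{X}_{k-1}$, and the inductive hypothesis delivers the desired implicit representation.

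The main obstacle is the ``no common parent'' case: ensuring that the cycle through the lowest common ancestor is actually induced, rather than collapsing to a shorter cycle via a chord across the BFS ancestor paths, requires a careful minimum-length choice of the apex pair and of the common leaf, and possibly iterating the extension argument one BFS layer up. The symmetric orientation is also subtle because the new-$a_k$ candidate for the apex-in-$V_{i+1}$ case lies in $V_{i-1}$, where it may be adjacent to some leaf of the $Z_{k-1}$, so additional work is needed to locate an ancestor of the common parent that is non-adjacent to every existing leaf.
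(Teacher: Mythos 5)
Your high-level plan matches the paper's: reduce to $Z_k$-free chordal bipartite graphs, induct on $k$, take a vertex $v$ that is not the centre of any induced $P_5$ (by Farber), and cover $G$ by the BFS-layer graphs $G_i = G[V_i\cup V_{i+1}]$, with $G_1$ a chain graph and $G_i$ for $i\ge 2$ to be shown $Z_{k-1}$-free. Where your argument diverges is exactly in the key step of ruling out a $Z_{k-1}$ inside $G_i$, and that is where the genuine gap lies.

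You split into a ``common parent'' case and a ``no common parent'' case, and you candidly acknowledge that the second case is not resolved: you are not sure the cycle through the lowest common BFS ancestor is induced, and you say ``additional work is needed''. That worry is warranted --- the BFS paths from two distinct parents to their common ancestor can share neighbours, so getting an induced $C_{\ge 6}$ out of that configuration is not automatic. The paper sidesteps the case split entirely with one observation you are missing: by chordality, the bipartite graph between the $a$-vertices of the putative $Z_{k-1}$ and their neighbours in $V_{i-1}$ is a chain graph. (The cycle witnessing this is concrete and induced: a $2K_2$ there, say $a_p c_p$, $a_q c_q$, together with $b_{k-1}$ --- which is adjacent to every $a$-vertex --- and a shortest path from $c_p$ to $c_q$ through the levels $V_j$, $j< i-1$, yields an induced cycle of length at least $6$; shortest paths are induced, and the level structure kills all other chords.) The chain-graph structure means the $V_{i-1}$-neighbourhoods of the $a$-vertices are linearly ordered by inclusion, so the smallest of them supplies a single vertex adjacent to \emph{all} $a$-vertices. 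This is stronger than what your pairwise ``common parent'' argument would deliver even if you completed it: pairwise common parents do not by themselves give a universal one, and the Helly-type step you would then need is another piece of your sketch that is not in place. Finally, your concern about the ``symmetric orientation'' is spurious: $Z_k$ has the automorphism $a_i\leftrightarrow b_{k+1-i}$, so one may assume without loss of generality that the high-degree side of the copy sits in $V_i$. In short: the framework is right, but the central ``no common parent'' case needs to be replaced by the chain-graph observation; as written, the proof is incomplete.
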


\begin{proof}
As discussed above, it suffices to show the claim for $Z_k$-free chordal bipartite graphs. 
We prove this by induction on $k$. This is clear when $k = 1$, since those graphs are edgeless, 
and when $k = 2$, since those graphs are cographs and hence have bounded clique-width. In general, assume the statement is true for some $k \geq 2$, 
and consider the class of $Z_{k + 1}$-free chordal bipartite graphs. As in the proof of Theorem~\ref{thm:dk-free}, 
we find a vertex which is not the centre of a $P_5$, and define the graphs $G_i$ in the same way. 
Once more, $G_1$ is a chain graph. For $i > 1$, we show that the graph $G_i$ contains no induced copy of $Z_k$. 
Suppose for a contradiction that there was such a copy, labelled as in Figure~\ref{fig:chain}, 
with the $a$ vertices in $V_i$ and the $b$ vertices in $V_{i + 1}$. We then note that, by chordality, 
the graph induced by the $a$ vertices together with their neighbours in $V_{i - 1}$ is a chain graph (otherwise a $2K_2$ together with $b_k$ and a shortest path between the $2K_2$'s going through the $V_j$ with $j < i - 1$ would induce a large cycle). Since every $a$ vertex has a neighbour in $V_{i - 1}$, 
it follows that there must be a vertex $b_{k + 1}$ in $V_{i - 1}$ adjacent to all $a$ vertices. 
Together with a vertex $a_{k + 1}$ from $V_{i - 2}$ adjacent to $b_{k + 1}$, we obtain an induced $Z_{k + 1}$, 
which is the desired contradiction. As before, from Theorem~\ref{thm:local} and using the induction hypothesis, we are done. 
\end{proof}


\section{Factorial properties}
\label{sec:factorial}


A factorial speed of growth, as we mentioned in the introduction, is a necessary condition for a hereditary class to admit an implicit representation,
and hence determining the speed is the first natural step towards identifying new classes that admit such a representation. In this section, we 
prove a number of results related to the speed of some hereditary classes of bipartite graphs.

\subsection{Hypercubes}

We repeat that bounded functionality implies at most factorial speed of growth. 
Whether the reverse implication is also valid was left as an open question in \cite{functionality}.
It turns out that the answer to this question is negative. 
This is witnessed by the class $\mathcal{Q}$ of induced subgraphs of hypercubes.
Indeed, in \cite{functionality} it was shown that $\mathcal{Q}$ has unbounded functionality.
On the other hand, it was shown in \cite{Har20} that the class admits an implicit representation and is, 
in particular, factorial; in fact, more generally, 
the hereditary closure of Cartesian products of any finite set of graphs \cite{HWZ21} and even of any class admitting an implicit representation \cite{cartesian}, admits an implicit representation.
These results, however, are non-constructive and they provide neither explicit labeling schemes, nor specific factorial bounds on the number of graphs.
Below we give a concrete bound on the speed of $\mathcal{Q}$.

\begin{theorem}
There are at most $n^{2n}$ $n$-vertex graphs in $\mathcal{Q}$.
\end{theorem}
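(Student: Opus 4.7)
The plan is to associate to each labeled graph $G\in\mathcal{Q}$ on vertex set $[n]$ a function $L_G:[n]\to[n]^2$ from which $G$ can be reconstructed uniquely; since there are $(n^2)^n=n^{2n}$ such functions, this will imply the bound.

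The crux is a per-component dimension reduction: every connected component $C$ of $G$ with $n_C$ vertices embeds into $Q_{n_C-1}$. Indeed, starting from any embedding of $C$ in some $Q_d$, after flipping coordinates so that a chosen root $r_C\in C$ lies at $0^d$, fix a spanning tree $T_C$ of $C$ and record for each tree edge the unique coordinate along which its endpoints differ; let $D$ be the set of at most $n_C-1$ coordinates arising this way. Traversing $T_C$ from $r_C$ only flips coordinates in $D$, so every $\phi(v)$ is supported on $D$; in particular, the unique coordinate distinguishing the endpoints of any non-tree edge of $C$ also lies in $D$. Projecting onto the coordinates in $D$ yields the desired embedding of $C$ into $Q_{|D|}\subseteq Q_{n_C-1}$, still with $r_C$ at the origin.

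The encoding is then immediate: after relabelling the coordinates of each component as $\{1,\dots,n_C-1\}\subseteq[n]$, set $L_G(v)=(p(v),c(v))$, where $p(v)$ is the parent of $v$ in its spanning tree and $c(v)$ is the relabelled coordinate of the edge $v p(v)$, with the convention $L_G(r_C)=(r_C,1)$ for roots. Every label lies in $[n]\times[n]$, and $G$ is recovered from $L_G$ by: identifying roots as the fixed points of the parent map; partitioning vertices into components via the parent pointers; and, within each component, running a BFS from the root with $\phi(r_C)=0$ and $\phi(v)=\phi(p(v))\oplus e_{c(v)}$. Two vertices of the same component are then adjacent iff their recovered coordinates are at Hamming distance $1$, while vertices of different components are declared non-adjacent. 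Distinct graphs therefore yield distinct labelings, and $|\mathcal{Q}_n|\leq n^{2n}$.

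I expect the dimension-reduction step to be the main obstacle: without confining each component to a hypercube of dimension at most $n_C-1$, the position of a vertex could a priori require $\Omega(d)$ bits for an unbounded $d$, ruling out an $O(\log n)$-bit encoding per vertex.
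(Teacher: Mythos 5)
Your proof is correct and takes essentially the same approach as the paper: embed each component into a hypercube of dimension at most one less than its size (with the root at the origin), encode each vertex by its spanning-tree parent together with the unique coordinate in which it differs from that parent, and reconstruct by traversal followed by a Hamming-distance-one adjacency test, giving $n$ labels in $[n]^2$ and hence the $n^{2n}$ bound. The only cosmetic difference is that you justify the dimension reduction by observing that all vertex positions are supported on the set $D$ of coordinates flipped by tree edges, whereas the paper argues inductively that the $n_C$ vertices agree on all but $n_C-1$ coordinates; the two observations are equivalent.
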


\begin{proof}
	Let $Q_n$ denote the $n$-dimensional hypercube, i.e.\ the graph with vertex set $\{ 0, 1 \}^n$, in which two vertices 
	are adjacent if and only if they differ in exactly one coordinate.
	To obtain the desired bound, we will produce, for each labelled $n$-vertex graph in $\mathcal Q$, a sequence of $2n$ numbers between $1$ and $n$ which allows us to retrieve the graph uniquely.

	As a preliminary, let $G \in \mathcal{Q}$ be a connected graph on $n$ vertices. 
By definition of $\mathcal Q$, $G$ embeds into $Q_m$ for some $m$. We claim that, in fact, $G$ embeds into $Q_{n-1}$. 
If $m < n$, this is clear. Otherwise, using an embedding into $Q_m$, each vertex of $G$ corresponds to an $m$-digit binary sequence. 
For two adjacent vertices, the sequences differ in exactly one position. From this, it follows inductively that the $n$ vertices of $G$ all agree in at least $m-(n-1)$ positions. 
The coordinates on which they agree can simply be removed; this produces an embedding of $G$ into $Q_{n-1}$. 
Additionally, by symmetry, if $G$ has a distinguished vertex $r$, we remark that we may find an embedding sending $r$ to $(0, 0, \dots, 0)$.
	
	We are now ready to describe our encoding. Let $G \in \mathcal{Q}$ be any labelled graph with vertex set $\{x_1, \dots, x_n\}$. We start by choosing, for each connected component $C$ of $G$:
	
	\begin{itemize}
		\item a spanning tree $T_C$ of $C$;
		\item a root $r_C$ of $T_C$;
		\item an embedding $\varphi_C$ of $T_C$ into $Q_{n - 1}$ sending $r_C$ to $(0, 0, \dots, 0)$.
	\end{itemize} 
	
	Write $C_i$ for the component of $x_i$. We define two functions $p, d: V(G) \to [n]$ as follows:
	\[
	p(x_i) = \begin{cases}
		i, & \text{if $x_i = r_{C_i}$;} \\
		j, & \text{if $x_i \neq r_{C_i}$, and $x_j$ is the parent of $x_i$ in $T_{C_i}$.}
		\end{cases}	
	 \] 
	
	\[
	d(x_i) = \begin{cases}
		1, & \text{if $x_i = r_{C_i}$;} \\
		j, & \text{if $x_i \neq r_{C_i}$, and $\varphi_{C_i}(x_i)$ and $\varphi_{C_i}(x_{p(x_i)})$ differ in coordinate $j$.}
	\end{cases}	
	\] 
	
	One easily checks that the above maps are well-defined; in particular, when $x_i$ is not a root, the embeddings of $x_i$ and of its parent do, indeed, differ in exactly one coordinate. 
The reader should also know that the value of $d$ on the roots is, in practice, irrelevant -- setting it to 1 is an arbitrary choice. 
	
	We now claim that $G$ can be restored from the sequence $p(x_1), d(x_1), \dots, p(x_n), d(x_n)$. 
To do so, we first note that this sequence allows us to easily determine the partition of $G$ into connected components. 
Moreover, for each connected component $C$, we may then determine its embedding $\varphi_C$ into $Q_{n - 1}$: 
$\varphi_C(r_C)$ is by assumption $(0, 0, \dots, 0)$; we may then identify its children using $p$, then compute their embeddings using $d$; we may then proceed inductively. 
This information allows us to determine the adjacency in $G$ as claimed, and the encoding uses $2n$ integers between $1$ and $n$ as required. 
\end{proof}

\noindent
We conjecture that a stronger bound holds.

\begin{conjecture}\label{conj:small-counter}
	There exists a constant $c$ such that the number of $n$-vertex graphs in $\mathcal{Q}$ is at most
	$c^n n!$, i.e. the class $\mathcal{Q}$ is \emph{small},
\end{conjecture}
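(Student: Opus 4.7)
The plan is to refine the encoding used in the preceding theorem by choosing a canonical embedding and exploiting its symmetries. First, by a standard exponential-generating-function argument, it suffices to prove the bound $f(n) \le c_1^n\, n!$ for the number $f(n)$ of labelled \emph{connected} $n$-vertex graphs in $\mathcal Q$; the passage to the full count $g(n)$ of all labelled $n$-vertex graphs in $\mathcal Q$ follows from the identity $\sum_n g(n) x^n/n! = \exp\bigl(\sum_n f(n) x^n/n!\bigr)$ together with a standard radius-of-convergence argument.

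For a connected labelled $G$, I would choose the canonical embedding $\varphi:V(G) \to \{0,1\}^{n-1}$ by rooting at the smallest-label vertex $r$, setting $\varphi(r)=0$, picking the BFS spanning tree $T$ from $r$ with label tie-breaking, and numbering the coordinates of $Q_{n-1}$ in the order in which they first appear as edge-flips along $T$ traversed in BFS order. Under this normalization $G$ is determined uniquely by the pair $(T,c)$, where $c:E(T) \to \mathbb{N}$ records the coordinate flipped by each tree edge, subject to the constraint $c(e_i) \in \{1,\dots,d_{i-1}+1\}$, with $d_{i-1}$ denoting the number of distinct coordinates used among the first $i-1$ tree edges. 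The tree $T$ contributes at most $n^{n-2}$ choices, and the sequence $c$ contributes at most $\prod_i (d_{i-1}+1) \le (n-1)!$. Multiplying, one gets $f(n) \le n^{n-2}\cdot(n-1)!$, which is comparable to $e^n (n!)^2/n$ and falls short of the target $c^n n!$ by a factor of roughly $n!$.

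Closing this $n!$ gap is the main obstacle, and it appears to require genuinely new ideas. Two natural strategies come to mind. The first is a structural dichotomy on the BFS-dimension $d(G)$: if $d(G)$ is small (say $O(1)$), the coordinate sequence $c$ has short total description length $O(n)$, and the bound $c^n n!$ is immediate; if $d(G)$ is close to $n-1$, the graph $G$ has many embedding symmetries (think of $K_{1,n-1}$, where the $n-1$ leaves can be assigned to basis vectors of $Q_{n-1}$ in $(n-1)!$ ways) that must be factored out of the count. The delicate part is handling the intermediate regime rigorously and showing that it contributes at most $c^n n!$. The second strategy is a double-counting argument: try to show that every connected $G \in \mathcal Q$ admits $\Omega((n/C)^n)$ distinct canonical-style encodings obtained by varying the tie-breaking rules or by reordering equivalent coordinates, so that the overcounting in the above bound drops by exactly the factor of $n!$ needed. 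Implementing either strategy appears to require a finer combinatorial understanding of induced subgraphs of hypercubes than is currently available in the literature, which is consistent with the conjecture remaining open.
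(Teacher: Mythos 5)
This statement is a \emph{conjecture}, not a theorem: the paper offers no proof of it, and explicitly leaves it open (noting only that, if true, $\mathcal Q$ would be an explicit counter-example to the ``small conjecture'' on twin-width). So there is no paper proof for you to match. Your analysis is sound as far as it goes: you correctly observe that the paper's own encoding (spanning forest plus coordinate-flip labels per tree edge, normalized by a canonical embedding of each component) gives roughly $n^{n-2}\cdot(n-1)!$ for connected graphs — a modest refinement of the $n^{2n}$ bound proved in the preceding theorem — and that this still overshoots the target $c^n n!$ by a factor of order $n!$. Your reduction to the connected case via the exponential formula is standard and fine. Your two proposed strategies for closing the gap (dichotomy on effective dimension, or double-counting canonical encodings) are reasonable lines of attack, but you are right that neither is carried through, and your honest conclusion that the conjecture remains open is exactly the state of affairs in the paper. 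In short: no gap on your side relative to the source, because the source proves nothing here either.
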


\noindent
If Conjecture \ref{conj:small-counter} is true, the class $\mathcal{Q}$ would be an explicit counter-example to \emph{the small conjecture} \cite{twin}, which says that every small class has bounded twin-width. This conjecture is known to be false \cite{small-counter}, but, to the best of our knowledge, no explicit counter-example is available.

\subsection{Subclasses of chordal bipartite graphs} 
\label{sec:fac-cb}

A super-factorial lower bound for the number of labelled $n$-vertex chordal bipartite graphs was shown in \cite{Spi95}. 
This result was improved in \cite{chordal-factorial} by showing that the speed remains super-factorial 
for the class of $(2C_4,2C_4^+)$-free chordal bipartite graphs, where $2C_4^+$ is the graph
obtained from $2C_4$ by adding an edge between the two copies of $C_4$. In this section, we show that for every {\it proper} 
induced subgraph $H$ of $2C_4$ or of $2C_4^+$, the speed of $H$-free chordal bipartite graphs is factorial. There are precisely 
three maximal such subgraphs, which we denote by $X$, $Y$ and $Z$ (see Figure~\ref{fig:XYZ}).

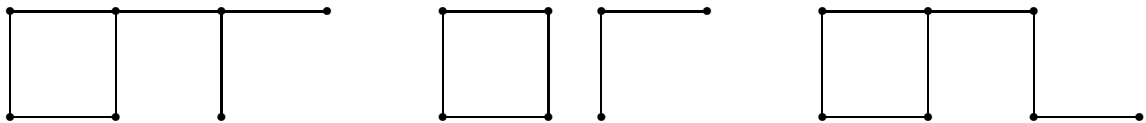
\begin{figure}[ht]
\begin{center} 
\begin{picture}(140,100)
\put(0,20){\circle*{3}}
\put(40,20){\circle*{3}}
\put(80,20){\circle*{3}}
\put(80,60){\circle*{3}}
\put(120,60){\circle*{3}}
\put(0,60){\circle*{3}}
\put(40,60){\circle*{3}}
\put(0,20){\line(1,0){40}}

\put(0,60){\line(1,0){40}}
\put(40,60){\line(1,0){40}}
\put(80,60){\line(1,0){40}}

\put(0,20){\line(0,1){40}}
\put(40,20){\line(0,1){40}}
\put(80,20){\line(0,1){40}}
\end{picture}
\begin{picture}(140,100)
\put(20,20){\circle*{3}}
\put(60,20){\circle*{3}}
\put(80,20){\circle*{3}}
\put(80,60){\circle*{3}}
\put(120,60){\circle*{3}}
\put(20,60){\circle*{3}}
\put(60,60){\circle*{3}}
\put(20,20){\line(1,0){40}}
\put(20,60){\line(1,0){40}}
\put(80,60){\line(1,0){40}}

\put(20,20){\line(0,1){40}}
\put(60,20){\line(0,1){40}}
\put(80,20){\line(0,1){40}}
\end{picture}
\begin{picture}(120,100)
\put(20,20){\circle*{3}}
\put(60,20){\circle*{3}}
\put(100,20){\circle*{3}}
\put(140,20){\circle*{3}}
\put(100,60){\circle*{3}}
\put(20,60){\circle*{3}}
\put(60,60){\circle*{3}}
\put(20,20){\line(1,0){40}}
\put(100,20){\line(1,0){40}}

\put(20,60){\line(1,0){40}}
\put(60,60){\line(1,0){40}}

\put(20,20){\line(0,1){40}}
\put(60,20){\line(0,1){40}}
\put(100,20){\line(0,1){40}}
\end{picture}
\end{center}
\caption{The graphs $X$ (left), $Y$ (middle),  and $Z$ (right)}
\label{fig:XYZ}
\end{figure}

In the proof, we use the following analogue of Theorem~\ref{thm:local} proved in \cite{local}.
\begin{theorem}{\rm \cite{local}}\label{thm:factorial}
Let $\mathcal X$ be a class of graphs and $c$ a constant. If every graph $G\in \mathcal X$ can be covered by graphs from 
a class $\mathcal Y$ of at most factorial speed of growth in such a way that every vertex of $G$ is covered by at most $c$ graphs, 
then $\mathcal X$ also has at most factorial speed of growth.
\end{theorem}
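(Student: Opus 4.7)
The plan is to directly bound the number of labelled $n$-vertex graphs in $\mathcal X$ by encoding each such graph via a compact description of one of its covers, in the same spirit as the proof of Theorem~\ref{thm:local}.

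Given $G\in \mathcal X$ with $V(G)=[n]$, I would fix a cover $H_1,\dots,H_k$ of $G$ by graphs in $\mathcal Y$ with every vertex lying in at most $c$ of the $H_i$, and write $n_i:=|V(H_i)|$. After discarding any empty piece, the overlap hypothesis immediately gives $\sum_{i=1}^k n_i\le cn$, and in particular $k\le cn$. I would then encode $G$ by two pieces of data: (a) for each $i\in[k]$, the labelled graph $H_i\in\mathcal Y$ on the canonical vertex set $[n_i]$; and (b) for each $v\in[n]$, a list of at most $c$ pairs $(i,j)$ specifying that $v$ corresponds to the $j$-th vertex of $H_i$. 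Together, (a) and (b) determine $G$ uniquely, since $E(G)=\bigcup_i E(H_i)$ and the incidences of each edge can be read off from (a) using the translations in (b).

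The rest is a counting exercise. For (b), each vertex has at most $(kn+1)^c=n^{O(1)}$ possible lists, so the total contribution across all $n$ vertices is $2^{O(n\log n)}$. For (a), since $\mathcal Y$ is at most factorial, there are at most $2^{c_0 n_i\log n_i}$ labelled graphs in $\mathcal Y$ on $n_i$ vertices, and by convexity
\[ \sum_{i=1}^k n_i\log n_i \;\le\; \Bigl(\sum_{i=1}^k n_i\Bigr)\log\Bigl(\sum_{i=1}^k n_i\Bigr) \;\le\; cn\log(cn), \]
so the contribution of (a) multiplies to $2^{O(n\log n)}$ as well. Combining (a) and (b) and noting that every $G\in\mathcal X_n$ arises from at least one such encoding, we conclude $|\mathcal X_n|\le 2^{O(n\log n)}$, which is the desired factorial bound.

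I do not expect a genuine obstacle here: the argument is essentially a counting exercise, and the only mildly delicate point is the convexity step for $\sum n_i\log n_i$, which is routine. Informally, this theorem is the ``counting shadow'' of Theorem~\ref{thm:local}: the same cover-based strategy that produces an implicit label of $O(\log n)$ bits per vertex simultaneously bounds $|\mathcal X_n|$ by $2^{O(n\log n)}$, since the number of possible encodings of length $O(n\log n)$ is at most $2^{O(n\log n)}$.
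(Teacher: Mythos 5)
Your proof is correct, and since the paper states this theorem with a citation to \cite{local} rather than giving its own argument, there is no in-paper proof to compare against; your counting argument is exactly the natural route and, as you note, is the ``counting shadow'' of Theorem~\ref{thm:local}. One small nitpick: the inequality $\sum_i n_i\log n_i \le \bigl(\sum_i n_i\bigr)\log\bigl(\sum_i n_i\bigr)$ is not really a convexity fact (Jensen points the other way); it just follows from $\log n_i \le \log\bigl(\sum_j n_j\bigr)$ and summing, so the step is even more elementary than you claim. Everything else---the bound $\sum n_i \le cn$ from the overlap hypothesis, the $n^{O(1)}$ bound per vertex for the placement lists, and the injectivity of the encoding---is sound and yields $|\mathcal X_n| \le 2^{O(n\log n)}$.
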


We also use the following result.

\begin{theorem}{\rm \cite{chordal-bipartite}}\label{thm:A}
	For any forest $F$, the class of chordal bipartite graphs excluding $F$ has at most factorial speed of growth.
\end{theorem}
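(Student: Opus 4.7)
The plan is to proceed by induction on $|V(F)|$, combining factorial bounds across a covering via Theorem~\ref{thm:factorial}. The base cases are easy. If $F$ consists of $k$ isolated vertices, any $F$-free chordal bipartite graph has independence number at most $k-1$; since bipartite graphs are $2$-colourable, this forces the total number of vertices to be at most $2(k-1)$, giving a finite (hence trivially factorial) class. If $F = K_2$, the $F$-free chordal bipartite graphs are just the edgeless graphs, one per $n$, and are therefore factorial.

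For the inductive step, fix a forest $F$ with at least two edges, choose a leaf $\ell$ of $F$ attached to a vertex $u$, and set $F' := F - \ell$. Given an $F$-free chordal bipartite graph $G$, I would invoke the Farber-style fact (already exploited in this paper, e.g.\ in the proof of Theorem~\ref{thm:dk-free}) that every chordal bipartite graph contains a vertex $v$ which is not the centre of any induced $P_5$. Rooting a BFS at $v$ produces layers $V_0 = \{v\}, V_1, V_2, \ldots$, and the slab graphs $G_i := G[V_i \cup V_{i+1}]$ remain chordal bipartite; moreover $G_1$ is a chain graph because $v$ is not the centre of any $P_5$. Since each vertex of $G$ lies in at most two consecutive slabs, by Theorem~\ref{thm:factorial} it is enough to show that the class of slabs obtained this way is itself of at most factorial speed.

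The central claim to establish is that each slab $G_i$ is $F'$-free (or, more modestly, decomposable into a bounded number of $F'$-free pieces). The informal justification is that an induced copy of $F'$ in $G_i$ with the vertex $u$ sitting in the BFS layer nearer to the root can be extended to an induced copy of $F$ in $G$ by prepending a suitable predecessor of $u$ from the previous layer in the role of $\ell$. Assuming this reduction is valid, the inductive hypothesis places each slab in a factorial class, and Theorem~\ref{thm:factorial} then delivers the factorial bound on $G$ itself.

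The main obstacle is making this leaf-prepending step rigorous. A predecessor of $u$ in the previous BFS layer need not avoid the rest of the embedded $F'$: it may be adjacent to several other vertices of $F'$ in the slab, corrupting the candidate induced $F$. Overcoming this will likely require either (i) choosing the leaf $\ell$ so that $u$ lies in the earliest BFS layer among vertices of $F'$, and then using chordality (which forbids induced $C_{\geq 6}$) to rule out predecessors with too many slab-neighbours among $V(F')$, or (ii) partitioning each slab into a bounded number of sub-pieces inside which the reduction is forced. Verifying this reduction uniformly, across every slab and every embedding of $F'$, is where the real combinatorial work of the argument lies.
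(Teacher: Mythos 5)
The paper itself does not prove Theorem~\ref{thm:A}; it is quoted from \cite{chordal-bipartite}, so there is no in-paper argument to compare against, and your proposal must stand on its own. The base cases are fine, and the opening move --- rooting a BFS at a vertex that is not the centre of a $P_5$ so that each slab $G_i := G[V_i \cup V_{i+1}]$ is chordal bipartite and $G_1$ is a chain graph --- is a reasonable decomposition, in the spirit of Theorems~\ref{thm:dk-free} and~\ref{thm:X} in this paper.

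However, the central inductive claim --- that each slab $G_i$ is $F'$-free, or boundedly covered by $F'$-free pieces --- is both unproved and false as stated, and the ``leaf-prepending'' step you flag as the crux genuinely fails. Take $F = K_{1,3}$, so $F' = P_3$, and let $G = C_4$ with vertices $a,b,c,d$ in cyclic order: $G$ is chordal bipartite and $K_{1,3}$-free. Running BFS from $v=a$ gives $V_1 = \{b,d\}$, $V_2 = \{c\}$, and $G_1 = G[\{b,c,d\}]$ is exactly an induced $P_3$ with centre $c$. So the slab contains $F'$ while $G$ contains no $F$. This is, moreover, an embedding your prepending argument does not even address: the attachment vertex $c$ lies in the \emph{farther} layer $V_2$, and its only predecessors $b,d$ are already in the embedded $P_3$, so there is no usable extension on either side. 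Your suggested remedies (choosing $\ell$ according to BFS depth, or sub-partitioning each slab) are stated only as directions, and the example shows the obstruction is not merely a predecessor with extra edges but the complete absence of any candidate vertex to play the role of $\ell$. As written, the inductive step is missing and the argument does not prove the theorem; a correct proof must either establish a genuinely different structural reduction on the slabs or abandon the plan of placing them in a smaller $F'$-free class.
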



We observe that it is unknown whether the last result can be strengthened by replacing ``at most factorial speed of growth'' with ``implicit representation''. 
In particular, it remains an open problem whether a factorial upper bound obtained for the three subclasses chordal bipartite graphs in the next three theorems can be 
strengthened to an implicit representation.  

\begin{theorem}\label{thm:X}
The class of $X$-free chordal bipartite graphs is factorial.
\end{theorem}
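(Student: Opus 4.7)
The plan is to apply Theorem~\ref{thm:factorial} via a BFS-layer covering, exactly as in the proofs of the $S_{2,2,2}$-free and $D_k$-free cases earlier in this section. I first reduce to the connected case, and for a connected $X$-free chordal bipartite graph $G$ select a vertex $v$ that is not the centre of any induced $P_5$ (which exists by \cite{Farber}). Writing $V_i$ for the $i$-th BFS layer from $v$ and setting $G_i := G[V_i \cup V_{i+1}]$, each vertex of $G$ lies in at most two of the $G_i$'s, so by Theorem~\ref{thm:factorial} it suffices to place each $G_i$ in a common factorial hereditary class. The layer $G_1$ is a chain graph, because an induced $2K_2$ in $G_1$ would combine with $v$ into an induced $P_5$ centred at $v$; chain graphs are factorial.

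For $i \geq 2$, the plan is to show that $G_i$ (which inherits chordal bipartiteness from $G$) additionally excludes a fixed forest $F$, so that Theorem~\ref{thm:A} supplies the factorial upper bound. The candidate $F$ is dictated by $X$ itself: label the vertices of $X$ so that the $C_4$ is $v_1 v_2 v_3 v_4$ (with $v_1, v_3$ in one part and $v_2, v_4$ in the other) and the claw-tail is $v_4 \sim v_5$, $v_5 \sim v_6$, $v_5 \sim v_7$ (so $v_5$ lies in the same part as $v_1, v_3$, and $v_6, v_7$ in the opposite part). Deleting $v_4$ from $X$ leaves the forest $F = 2P_3$ with paths $v_1 v_2 v_3$ and $v_6 v_5 v_7$; crucially, in $X$ the removed vertex $v_4$ is precisely a common neighbour of $v_1, v_3, v_5$.

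The heart of the argument, which I expect to be the main obstacle, is then the following: whenever $G_i$ contains an induced copy of $F$ placed in parts consistently with $X$ (i.e.\ the endpoints of one $P_3$ and the middle of the other lying together in a common bipartition class), chordal bipartiteness of $G$ must force a common neighbour of these three vertices to exist in an adjacent BFS layer. Such a common neighbour plays the role of $v_4$ and yields an induced $X$ in $G$, a contradiction. The key tool should be the absence of induced even cycles of length $\geq 6$ in $G$, which forces the BFS branches of the three vertices to merge nearby. If a naive pairwise merging argument does not immediately yield a simultaneous common ancestor, one may need to strengthen $F$ by adjoining auxiliary pendant vertices that encode more of the BFS structure, or to handle the boundary case $i = 2$ separately using the non-$P_5$-centre property of $v$.
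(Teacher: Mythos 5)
Your high-level strategy is exactly the paper's: reduce to the connected case, decompose along BFS layers from a chosen root, cover $G$ by the layer graphs $G_i = G[V_i \cup V_{i+1}]$, and invoke Theorem~\ref{thm:factorial} together with Theorem~\ref{thm:A} after showing each $G_i$ lies in a chordal bipartite class with a fixed excluded forest. The divergence is in the choice of that forest, and this is where the proposal breaks.

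Taking $F = 2P_3$ (the graph $X$ with the degree-3 vertex of the $C_4$ deleted) is too weak: it is \emph{not} true that an induced $2P_3$ in $G_i$, placed consistently with $X$'s bipartition, forces a common neighbour of $v_1, v_3, v_5$ in $V_{i-1}$. The two $P_3$'s have no edges between them, so chordality gives you nothing --- the ``merging'' argument you hope for requires the vertices whose ancestry is to be merged to already be joined by a short path inside $G_i$. Here is a concrete counterexample. Let $V_0 = \{v\}$, $V_1 = \{a,b\}$ with $v\sim a$, $v\sim b$, let $V_2 = \{v_1,v_3,v_5\}$ with $a\sim v_1$, $a\sim v_3$, $b\sim v_5$, and let $V_3 = \{v_2,v_6,v_7\}$ with $v_2 \sim v_1$, $v_2\sim v_3$, $v_5\sim v_6$, $v_5\sim v_7$ (and no other edges). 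This $10$-vertex graph is chordal bipartite (its only cycle is the $C_4$ on $a,v_1,v_2,v_3$), and it is $X$-free: the unique $C_4$ has no vertex whose outside neighbour has degree~$3$ with two pendant leaves. Yet $G_2 = G[V_2\cup V_3]$ contains exactly the unbalanced induced $2P_3$ you describe, with $v_1,v_3,v_5$ all in $V_2$ and $v_2,v_6,v_7$ all in $V_3$. Since $a \nsim v_5$ and $b\nsim v_1,v_3$, there is no common neighbour and no induced $X$ appears. So the claim ``$G_i$ contains no such $2P_3$'' is false, and your hedge about strengthening $F$ by pendants does not help, because in this example $v_1$ and $v_3$ have no further neighbours available.

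What the paper actually does is use a single long induced path $P_{12}$ in $G_i$ as the excluded forest. The point of using a connected, long path is precisely the one your argument is missing: consecutive even-indexed path vertices $x_{2j}$ and $x_{2j+2}$ are joined by $x_{2j+1}$ inside $G_i$, so the ``avoid an induced cycle of length $\ge 6$'' argument \emph{does} force them to have a common neighbour in $V_{i-1}$. The proof then extracts two such anchors $a$ (for $x_2,x_4$) and $b$ (for $x_8,x_{10}$) at opposite ends of the path and performs a case analysis on the adjacencies of $a,b$ to $x_2,x_4,x_6,x_8,x_{10}$, each branch producing an induced $X$ or a long induced cycle. Your choice of $F$ keeps the $P_3$'s disconnected and hence cannot drive any such forcing; a repair along the lines you gesture at would in effect have to rebuild the connectivity between the two $P_3$'s, at which point you are essentially back to a long path. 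Your treatment of $G_1$ via the non-$P_5$-centre property is fine but unnecessary (the paper's $P_{12}$ argument applies for all $i\ge 1$ and makes no use of a special root).
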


\begin{proof}
Let $G$ be a connected $X$-free chordal bipartite graph, let $v$ be a vertex of $G$, 
and let $V_i$ be the set of vertices of $G$ at distance $i$ from $v$. Assume $G[V_i \cup V_{i + 1}]$ ($i\ge 1$) contains 
an induced $P_{12}=(x_1,\ldots,x_{12})$ with even-indexed vertices in $V_i$ and odd-indexed vertices in $V_{i+1}$. 
To avoid an induced cycle of length at least $6$, vertices $x_2$ and $x_4$ must have a common neighbour $a\in V_{i-1}$.
Similarly, vertices $x_8$ and $x_{10}$ must have a common neighbour $b\in V_{i-1}$. If $b$ is adjacent to $x_2$, 
then vertices $x_1,x_2,x_3,x_8,x_9,x_{10},b$ induce an $X$ in $G$. Similarly, an induced $X$ arises if $b$ is adjacent to $x_4$,
and if $a$ is adjacent to $x_8$ or $x_{10}$.  
Therefore, $b$ is adjacent neither to $x_2$ nor to $x_4$, and $a$ is adjacent neither to $x_8$ nor to $x_{10}$, and hence $a\ne b$.
If additionally $a$ or $b$ is not adjacent to $x_6$, then an induced cycle of length at least $6$ can be easily found. 
Finally, if both $a$ and $b$ are adjacent to $x_6$, then vertices $x_2,x_3,x_4,x_6,x_7,a,b$ induce an $X$ in $G$.
A contradiction in all possible cases shows that $V_i$ and $V_{i+1}$ induce a $P_{12}$-free chordal bipartite graph for all $i\ge 1$.
By Theorems~\ref{thm:factorial} and~\ref{thm:A} this implies that the speed of  $X$-free chordal bipartite graphs is at most factorial.  
Since this class contains all chain graphs, its speed is at least factorial.
\end{proof}

\begin{theorem}\label{thm:Y}
The class of $Y$-free chordal bipartite graphs is factorial.
\end{theorem}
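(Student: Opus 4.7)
The plan is to follow the template used to prove Theorem~\ref{thm:X}. Take a connected $Y$-free chordal bipartite graph $G$, pick an arbitrary vertex $v$, and let $V_i$ denote the set of vertices of $G$ at distance $i$ from $v$. I aim to show that for every $i \geq 1$ the bipartite graph $G_i := G[V_i \cup V_{i+1}]$ is $P_K$-free for some constant $K$ depending only on $Y$. Once this is established, Theorem~\ref{thm:A} gives that each $G_i$ belongs to a class of at most factorial speed, and Theorem~\ref{thm:factorial}---applied to the covering of $G$ by the layers $G_i$, under which every vertex is covered at most twice---propagates this bound to $G$ itself.

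Suppose for contradiction that $G_i$ contains an induced path $P_K = x_1 x_2 \ldots x_K$ with even-indexed vertices in $V_i$ and odd-indexed in $V_{i+1}$. By chordality, $x_2$ and $x_4$ share a common neighbour $a \in V_{i-1}$, so that $\{a, x_2, x_3, x_4\}$ induces a $C_4$. A chordal-bipartite argument shows that the set of indices $j$ with $a \sim x_{2j}$ is a consecutive block $\{1, 2, \ldots, r\}$: a gap $a \sim x_{2j}$, $a \not\sim x_{2j+2}$, $a \sim x_{2j'}$ with $j' > j+1$ would close up into an induced cycle of length at least $6$. If $2r + 4 \leq K$, then $\{x_{2r+2}, x_{2r+3}, x_{2r+4}\}$ is an induced $P_3$; by the maximality of $r$ and by bipartite parity, $a$ has no neighbour in this triple, and since the path is induced, neither does any vertex of $\{x_2, x_3, x_4\}$. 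Hence $\{a, x_2, x_3, x_4\} \cup \{x_{2r+2}, x_{2r+3}, x_{2r+4}\}$ induces a $Y$ in $G$, contradicting $Y$-freeness. A symmetric argument, using the common neighbour $b \in V_{i-1}$ of $x_{K-2}$ and $x_K$, covers the mirror case in which $b$'s consecutive block $\{x_{2l}, \ldots, x_K\}$ begins with $2l \geq 6$: then $\{x_2, x_3, x_4\}$ together with $\{b, x_{K-2}, x_{K-1}, x_K\}$ induces a $Y$.

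The main obstacle is the residual regime in which both $a$ and $b$ cover essentially all $V_i$-path vertices, leaving no gap inside $V_i \cup V_{i+1}$ from which to draw the $P_3$. The idea here is to produce the $P_3$ from outside this layer: pick a neighbour $y \in V_{i+2}$ of some $V_{i+1}$-vertex near the right end of the path. Since $V_{i-1}$ and $V_{i+2}$ sit at BFS-distance at least $3$, $a$ is automatically non-adjacent to $y$, and with a careful choice of $y$ the triple $\{y, x_{K-1}, x_K\}$ (or a similar one) is an induced $P_3$ non-adjacent to $\{a, x_2, x_3, x_4\}$, realising the forbidden $Y$. A symmetric fallback using a neighbour of $a$ in $V_{i-2}$ handles the case $V_{i+2} = \emptyset$, and the configurations where neither layer supplies a suitable vertex turn out to be ruled out directly by chordality. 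Fixing $K$ large enough (say $K = 12$, as in the proof of Theorem~\ref{thm:X}) so that every sub-case fits on the path yields the desired $P_K$-freeness of $G_i$, and Theorems~\ref{thm:factorial} and~\ref{thm:A} then close the proof.
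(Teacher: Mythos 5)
Your plan hinges on the claim that each layer graph $G_i := G[V_i \cup V_{i+1}]$ of a connected $Y$-free chordal bipartite graph is $P_K$-free for some constant $K$, so that Theorems~\ref{thm:factorial} and~\ref{thm:A} apply. That claim is false, and the ``residual regime'' you wave away is exactly where it breaks. Consider the fan $F_n$ with vertex set $\{a\}\cup\{b_1,\dots,b_n\}\cup\{c_1,\dots,c_{n+1}\}$, edges $ab_i$ for all $i$, and $b_ic_i$, $b_ic_{i+1}$ for all $i$. This graph is chordal bipartite (every cycle passes through $a$, and every cycle of length at least $6$ has a chord $ab_j$). It is $Y$-free: since $Y$ is the disjoint union of $C_4$ and $P_3$, note that every $C_4$ in $F_n$ contains $a$ (the $b$--$c$ subgraph is a tree), and every $P_3$ contains some $b_j$ (because $\{a,c_1,\dots,c_{n+1}\}$ is independent); as every $b_j$ is adjacent to $a$, no $C_4$ and $P_3$ in $F_n$ are fully non-adjacent. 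Now take $v=a$: then $V_1=\{b_1,\dots,b_n\}$, $V_2=\{c_1,\dots,c_{n+1}\}$, $V_3=\emptyset$, and $G_1$ is the induced path $c_1b_1c_2b_2\cdots b_nc_{n+1}$ of length $2n$. Here the apex $a$ is adjacent to \emph{every} even-indexed vertex of this path, which is precisely your residual case; but $V_3$ is empty, $V_{-1}$ does not exist, and chordality does not rescue you---the counterexample simply stands. (Choosing a different $v$ in $F_n$ does not help either: a long induced path always survives in some layer.) So the argument cannot be completed along these lines.

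The paper's proof of Theorem~\ref{thm:Y} does not attempt to bound path lengths in layers at all. It uses the Farber theorem that a chordal bipartite graph has a vertex $v$ that is not the centre of a $P_5$; this forces $G[V_1\cup V_2]$ to be a chain graph. Ordering $V_1 = \{a_1,\dots,a_k\}$ by inclusion of neighbourhoods and exploiting $Y$-freeness, one then shows that either $\deg(v)\le 4$, or two vertices of $V_1$ have equal neighbourhoods (symmetric difference $0$), or a vertex $b\in B_{k-4} = N(a_{k-4})\setminus N(a_{k-3})$ has at most $2$ neighbours in $V_3$, whence $\sd(v,b)\le 6$. In every case, $G$ has a vertex of bounded degree or a pair of vertices of bounded symmetric difference, and the factorial bound follows by a direct $O(n\log n)$-bit encoding (repeatedly delete such a vertex while recording its neighbourhood or its symmetric-difference partner). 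This is a fundamentally different mechanism from the layer-covering used for $X$ in Theorem~\ref{thm:X}, and the difference is not cosmetic---the fan shows that the $X$-style argument is unavailable for $Y$.
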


\begin{proof}
This class contains all chain graphs and hence its speed is at least factorial.
To prove that the speed is at most factorial, we will show that every graph $G$ in this class has either a vertex of bounded degree or a pair of vertices of bounded symmetric difference. 
If $G$ has a vertex $x$ of bounded degree, we create a record of the neighbours of $x$ and delete $x$. If the symmetric difference of two vertices $x$ and $y$ is bounded, 
we create a record containing vertex $y$ and the vertices in the symmetric difference of $x$ and $y$, and delete $x$. Applying this procedure recursively, we 
create a record of length $O(n\log n)$ (where $n=|V(G)|$), which allows us to restore the graph and shows that the class is at most factorial.  

Let $G$ be a $Y$-free chordal bipartite graph, let $v$ be a vertex of $G$ which is not the center of a $P_5$,  
and let $V_i$ be the set of vertices of $G$ at distance $i$ from $v$. 
Then $V_1$ and $V_2$ induce a $2K_2$-free bipartite graph, i.e.\ a chain graph.
If two vertices $a_i,a_j$ in $V_1$ have the same neighbourhood, then $\sd(a_i,a_j)=0$ and we are done. 
Therefore, we assume that all vertices in $V_1$ have pairwise different neighbourhoods. 
Using the notation of Figure~\ref{fig:chain}, we denote the vertices of $V_1$ by $a_1,\ldots,a_k$.
Also, for $i=1,\ldots,k-1$, let $B_i:=N(a_i)-N(a_{i+1})$. Note that for each $i$ the set $B_i$ is non-empty.

If $\deg(v)\le 4$, we are done, so assume $k\ge 5$.
Let $b$ be a vertex in $B_{k-4}$ and assume $b$ has at least $3$ neighbours $c_1,c_2,c_3$ in $V_3$.
Let $b'$ be a vertex in $B_{k-2}$. Then  
$b'$ has either two neighbours or two non-neighbours among $c_1,c_2,c_3$.
If $b'$ is adjacent to $c_1$ and $c_2$, then $b,b',c_1,c_2,v,a_{k-1},a_k$ induce a $Y$.
If $b'$ is not adjacent to $c_1$ and $c_2$, then $b,b',c_1,c_2,v,a_{k-3},a_{k-2}$ induce a $Y$.
A contradiction in both cases shows that $b$ has at most $2$ neighbours in $V_3$ and hence $\sd(v,b)\le 6$.
 \end{proof}

\begin{theorem}\label{thm:Z}
The class of $Z$-free chordal bipartite graphs is factorial.
\end{theorem}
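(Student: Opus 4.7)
The plan is to follow the strategy of Theorem~\ref{thm:X}: let $G$ be a connected $Z$-free chordal bipartite graph, let $v$ be a vertex of $G$ which is not the centre of any induced $P_5$ (such a vertex exists in any chordal bipartite graph by Farber's theorem, as used in the proof of Theorem~\ref{thm:Y}), and let $V_i$ denote the set of vertices at distance $i$ from $v$. I aim to show that, for some fixed constant $k$ depending only on $Z$, each bipartite subgraph $G[V_i\cup V_{i+1}]$ is $P_k$-free. Since $P_k$ is a forest, Theorem~\ref{thm:A} then gives at most factorial speed for each $G[V_i\cup V_{i+1}]$, and Theorem~\ref{thm:factorial} lifts this to $G$ itself via the 2-cover by consecutive layer-pairs. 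The matching factorial lower bound is immediate, since the class contains all chain graphs.

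The cases $i\in\{0,1\}$ follow from the choice of $v$: $G[V_0\cup V_1]$ is a star, trivially $P_4$-free, and $G[V_1\cup V_2]$ is $2K_2$-free (a chain graph) and hence $P_5$-free. For $i\ge 4$, the key observation is that the presence of any induced $P_3$ with middle vertex in $V_{i+1}$ inside $G[V_i\cup V_{i+1}]$ already forces an induced copy of $Z$ in $G$. Indeed, such a $P_3$ consists of $x_2,x_4\in V_i$ and $x_3\in V_{i+1}$ with $x_2\sim x_3\sim x_4$ and $x_2\not\sim x_4$; by the standard chordal-bipartite argument used in the proof of Theorem~\ref{thm:X}, $x_2$ and $x_4$ share a common neighbour $a\in V_{i-1}$, giving an induced $C_4$ on $\{a,x_2,x_3,x_4\}$. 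I then build a pendant $P_4$ at $a$ by walking back along the BFS tree: pick $u_1\in V_{i-2}$ adjacent to $a$, $u_2\in V_{i-3}$ adjacent to $u_1$, and $u_3\in V_{i-4}$ adjacent to $u_2$. For $i\ge 4$ all three vertices exist, and the BFS-layer distances guarantee that $(a,u_1,u_2,u_3)$ is an induced $P_4$ with no edges to $\{x_2,x_3,x_4\}$; the seven vertices $\{a,x_2,x_3,x_4,u_1,u_2,u_3\}$ then induce $Z$ in $G$, a contradiction. Consequently $G[V_i\cup V_{i+1}]$ has no such $P_3$, so every vertex of $V_{i+1}$ has at most one neighbour in $V_i$; hence $G[V_i\cup V_{i+1}]$ is a star forest, in particular $P_4$-free.

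The remaining cases $i\in\{2,3\}$ use the same idea with a shorter pendant: for $i=3$ one takes $u_3\in V_1\setminus N(a)$ and for $i=2$ one takes $u_3\in V_2\cap N(u_2)\setminus N(a)$, using the chain-graph structure of $G[V_1\cup V_2]$ to guarantee existence. This works whenever $a$ has a non-neighbour in $V_1$ (equivalently, $a$ is not the maximum of the chain of $V_1$ under neighbourhood inclusion in $V_2$). The main obstacle I expect is the degenerate sub-sub-case in which $a$ is adjacent to all of $V_1$: my plan here is to pivot the pendant $P_4$ outward into layers $V_{i+2}, V_{i+3}, V_{i+4}$, which again produces an induced $Z$ whenever these outer layers are non-empty; if they are not, then $G$ has diameter at most a small constant and the class of such graphs is factorial by direct inspection. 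Combining all cases yields the required $P_k$-freeness of every $G[V_i\cup V_{i+1}]$, completing the proof.
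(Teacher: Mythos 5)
Your approach mirrors the BFS-layering strategy of Theorem~\ref{thm:X}, which is a reasonable route, but it is genuinely different from the paper's proof. The paper extends a $P_{14}$ (if one exists) to a maximal induced tree $T$, then shows that every vertex outside $T$ with a neighbour in $T$ is adjacent to \emph{all} vertices of $T$ in one part of the bipartition; from this one extracts two vertices of $T$ with symmetric difference at most $2$, which can be peeled off recursively until the $P_{14}$-free base case handled by Theorem~\ref{thm:A} is reached. So the paper decomposes via maximal induced trees and a symmetric-difference argument, while you decompose by BFS layer-pairs and invoke Theorems~\ref{thm:A} and~\ref{thm:factorial}.

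The portion of your argument for $i\ge 4$ is correct, and in fact sharper than needed: a $P_3$ centred in $V_{i+1}$, a common $V_{i-1}$-neighbour $a$ of its two endpoints (granted by chordality as in the proof of Theorem~\ref{thm:X}), and the BFS tree path $a,u_1,u_2,u_3$ descending through $V_{i-2},V_{i-3},V_{i-4}$ do induce a $Z$ — the layer-parity and layer-distance checks rule out all unwanted edges. Hence in those layers $G[V_i\cup V_{i+1}]$ is a star forest with centres in $V_i$.

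The genuine gap is in the cases $i=2,3$. The inward pendant runs out of layers, and your ``bounce back through $v$'' repair requires $a$ to have a non-neighbour in $V_1$ (for $i=3$), or, for $i=2$, a vertex $u_2\in V_1$ avoiding both $x_2$ and $x_4$ together with a vertex $u_3\in V_2\cap N(u_2)$ avoiding both $a$ and $x_3$ — conditions that are strictly stronger than what you wrote, and which can all fail simultaneously. You flag the degenerate sub-case but the proposed fix (``pivot outward into $V_{i+2},V_{i+3},V_{i+4}$'') is not worked out: it is unclear at which $C_4$-vertex you attach the outward pendant, and the BFS structure does not guarantee that the required outer neighbours exist. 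Finally, the fallback that bounded-diameter $Z$-free chordal bipartite graphs ``are factorial by direct inspection'' is an unsupported assertion; bounded diameter alone is not a recognised reason for a chordal bipartite class to be factorial, and this claim would itself require a proof. As written, the argument does not close for $i\in\{2,3\}$, so the proof has a real gap even though the high-level plan and the $i\ge 4$ analysis are sound.
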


\begin{proof}
Let $G$ be a connected $Z$-free chordal bipartite graph given together with a bipartition of its vertices into an independent set of white vertices and an independent set of black vertices. 
We will show that $G$ either contains no $P_{14}$, or it has two vertices of symmetric difference at most $2$. 
If $G$ is $P_{14}$-free, then $G$ belongs to a factorial class by Theorem~\ref{thm:A}. 
Using this, it is then routine to produce a (not necessarily implicit) representation of $G$ using $O(n\log n)$ bits 
by iteratively removing vertices of low symmetric difference until we are left with a $P_{14}$-free graph.
    
To show that $G$ either contains no $P_{14}$, or it has two vertices of symmetric difference at most $2$, 
we assume that $G$ contains a $P_{14}$, and extend it to a maximal induced tree that we denote by $T$. 
We claim that the vertices in $T$ are not distinguished by the vertices in $G - T$. 
This immediately yields our two vertices of low symmetric difference: if $T$ has at least three leaves, 
then it has two in the same side of the bipartition, and the only two vertices possibly distinguishing them 
are their neighbours in $T$. If $T$ has only two leaves, it is a path, and we can easily find two vertices on the path 
with symmetric difference at most $2$ (even at most $1$). Therefore, it suffices to prove the following claim.
    
    \begin{claim*}
        Let $T$ be as above, and let $x \in G - T$. Then $N_T(x)$ is either empty, or consists of all vertices of $T$ lying in one part of the bipartition.  
    \end{claim*}
    
    \begin{proof}[Proof of claim] \let\qed\relax
        Suppose $x$ has, without loss of generality, a white neighbour in $T$, and let $D$ be the set of all white neighbours of $x$ in $T$. We prove the claim in a series of steps.
        
        \begin{itemize}
            \item[i.] $|D| \geq 2$, since otherwise the tree $T$ is not maximal.
            
            \item[ii.] If $w, w' \in D$, then $w'' \in D$ for each white vertex $w''$ lying on the path in $T$ between $w$ and $w'$, since otherwise an induced cycle of length at least 6 arises. 
        \end{itemize}
        
        From the above, we immediately obtain:
        
        \begin{itemize}
            \item[iii.] Every vertex of $D$ belongs to a $C_4$ in $T \cup \{x\}$.
        \end{itemize}
        
        We then note:
        
        \begin{itemize}
            \item[iv.] No vertex of $T - D$ lies at distance 4 or more from $D$, since otherwise an induced $Z$ arises.
        \end{itemize}
        
        In particular, any white vertex is at distance at most 2 from $D$. Since $T$ contains a $P_{14}$ (and hence has two white vertices at distance 12), the triangle inequality implies:
        
        \begin{itemize}
            \item[v.] There exist two vertices in $D$ at distance at least 8. 
        \end{itemize}
        
        Together with ii., this in turn implies: 
        
        \begin{itemize}
            \item[vi.] There exists an induced path $P~=~(v_1, \dots, v_9)$ in $T$ (with edges $v_iv_{i + 1}$) such that $v_1, v_3, v_5, v_7, v_9 \in D$. 
        \end{itemize}

Let $w_0$ be a white vertex in $T - D$ closest to $P$ in $T$, and let $Q=(w_0,w_1,\ldots,w_k)$ be the unique path from $w_0$ to $P$ in $T$
(with $w_k\in V(P)$). If $k=1$, say $w_k=v_4$ (without loss of generality), then $w_{0},w_1,v_5,x,v_7,v_8,v_9$ induce a $Z$.
If $k\ge 2$, then $x$ is adjacent to $w_2$ (due to the choice of $w_0$), and assuming, without loss of generality, that $w_2$ is different from $v_7$ and $v_9$,
we conclude that $w_{0},w_{1},w_2,x,v_7,v_8,v_9$ induce a $Z$.  
A contradiction in both cases shows that $T - D$ contains no white vertices, thus proving the claim and the theorem. \end{proof}
\end{proof}


\section{Conclusion}
\label{sec:con}


In this paper, we proved several results related to graph parameters, implicit representation and factorial properties and raised a number of open questions.
In particular, we asked (in the form of conjectures) whether bounded symmetric difference implies implicit representation
and whether symmetric difference is bounded in the class of $P_7$-free bipartite graphs. Concerning implicit representations, one of the minimal classes 
for which this question is open is the class of $P_7$-free chordal bipartite graphs. It is also open for the three subclasses of chordal bipartite graphs 
from Section~\ref{sec:fac-cb}, for the class of bipartite graphs excluding a one-sided forbidden copy of an unbalanced $2P_3$. 
An explicit description of a labelling scheme that provides an implicit representation for induced subgraphs of hypercubes also remains an open problem.

\bigskip

\noindent
\textbf{Acknowledgement}.
We would like to thank the anonymous referee for many helpful suggestions, which improved the presentation of this paper.

\end{document}